\documentclass[reqno,11pt]{amsart}
\usepackage[utf8]{inputenc}
\DeclareUnicodeCharacter{FF0C}{,}
\usepackage{mathrsfs}
\usepackage{}

\def\subjclass#1{{\renewcommand{\thefootnote}{}
\footnote{\emph{Mathematics Subject Classification (2020):} #1}}}
\usepackage{amsfonts}
\usepackage{amssymb}

\date{\today}
\hoffset=- 2cm \voffset=- 0cm 
 \theoremstyle{plain}
\newtheorem{Thm}{Theorem}

\newtheorem{Prop}[Thm]{Proposition}
\newtheorem{Rem}[Thm]{Remark}
\newtheorem{Lem}[Thm]{Lemma}

\usepackage{amssymb}

\usepackage{color}

\def\0{\mathbf 0}

\def\R{\mathbb R}

\errorcontextlines=0 \numberwithin{equation}{section}
\numberwithin{Thm}{section}
\allowdisplaybreaks

\begin{document}
\large

\title[Damped wave-type magnetohydrodynamic equations]
{Stabilizing effect of a background magnetic field on the 2D damped wave-type MHD equations}


\author[]{Zhi Chen}
\address{Zhi Chen: School of Mathematics and Statistics, Anhui Normal University, Wuhu 241002, People's Republic of China }
\email{zhichenmath@ahnu.edu.cn}%

\author[]{Mingwen Fei}
\address{Mingwen Fei: School of Mathematics and Statistics, Anhui Normal University, Wuhu 241002, People's Republic of China}
\email{mwfei@ahnu.edu.cn}%

\author[]{Hongxia Lin }
\address{Hongxia Lin: School of Mathematical Sciences，Chengdu University of Technology, Chengdu, 610059, People's Republic of China}
\email{linhongxia13@cdut.edu.cn}

\author[]{Jiahong Wu }
\address{Jiahong Wu: Department of Mathematics, University of Notre Dame, Notre Dame, IN 46556, USA}
\email{jwu29@nd.edu}

\author[]{Qian Zu}
\address{Qian Zu: School of Mathematics and Statistics, Anhui Normal University, Wuhu 241002, People's Republic of China}
\email{qianzu@ahnu.edu.cn}

\keywords{MHD-wave equations, background magnetic field, stability,
	anisotropic dissipation, time decay estimates, energy method}

\subjclass{35A05, 35B35, 35B40, 35Q35, 76D03}

\begin{abstract}
	The stabilizing effect of a background magnetic field on electrically conducting fluids has been rigorously established for the standard MHD equations. This paper extends this theory to the more physically accurate damped wave-type MHD equations, where the induction equation is hyperbolic-parabolic and the velocity field has only vertical damping with no dissipation. These two features make the stability analysis harder than in the standard MHD setting. To overcome these difficulties,
	we design an energy functional exploiting the anisotropic structure, and
	discover a remarkable cancellation between the two most dangerous nonlinear terms by
	exploiting the full algebraic structure of the coupled system. As a consequence, we
	prove that any small perturbation near the background magnetic field is globally stable
	and establish optimal decay rates consistent with the 2D heat equation. To the
	best of our knowledge, this is the first rigorous stability result for the damped
	wave-type MHD equations near a background magnetic field.
\end{abstract}

\maketitle

\section{Introduction}

The classical incompressible magnetohydrodynamic (MHD) equations govern the motion of
electrically conducting fluids and are widely used in plasma physics, astrophysics, and
geophysics. These equations consist of the incompressible Navier-Stokes equations coupled
with the induction equation for the magnetic field $B$:
\begin{align}\label{MHD}
	\left\{
	\begin{array}{lll}
		\partial_t u + (u\cdot\nabla)u + \nabla p = (B\cdot\nabla)B + \nu\,\Delta u, \\[5pt]
		\partial_t B - \eta\,\Delta B + (u\cdot\nabla)B = (B\cdot\nabla)u, \\[5pt]
		\nabla\cdot u = \nabla\cdot B = 0.
	\end{array}
	\right.
\end{align}
However, the induction equation in \eqref{MHD} is derived under the quasi-static
approximation (that is, Amp\`{e}re's original law without Maxwell's correction), which
neglects the displacement current $\varepsilon_0\mu_0\partial_t E$ in Amp\`{e}re's law.
Retaining this term and combining all four of Maxwell's equations with Ohm's law for a
conducting fluid leads to a more accurate induction equation for the magnetic field $B$:
\begin{align}\label{induction-full}
	\gamma\,\partial_{tt} B + \partial_t B - \eta\,\Delta B
	+ (u\cdot\nabla)B = (B\cdot\nabla)u,
\end{align}
where the coefficient $\gamma = \varepsilon_0\mu_0 = 1/c^2$ is the product of the
permittivity and permeability of the medium, and $c$ is the speed of light in the medium.
In the Newtonian (non-relativistic) framework, $c$ is enormous relative to the fluid
velocity, so $\gamma = 1/c^2$ is an extremely small positive parameter. The coupled system consisting of a Navier-Stokes-type momentum equation and the induction equation
\eqref{induction-full} is called the damped wave-type MHD equations, or simply the
MHD-wave system,
\begin{align}\label{MHD-wave}
	\left\{
	\begin{array}{lll}
		\partial_t u + (u\cdot\nabla)u + \nabla p = (B\cdot\nabla)B + \nu\,\Delta u, \\[5pt]
		\gamma\,\partial_{tt} B + \partial_t B - \eta\,\Delta B + (u\cdot\nabla)B = (B\cdot\nabla)u, \\[5pt]
		\nabla\cdot u = \nabla\cdot B = 0.
	\end{array}
	\right.
\end{align}
We refer the reader to \cite{Comissiong,JWX,MNO, XY} for a detailed derivation from Maxwell's equations.
When $\gamma \to 0$, the term $\gamma\,\partial_{tt}B$ vanishes and \eqref{MHD-wave}
formally reduces to the classical MHD system \eqref{MHD}. Thus the MHD-wave system is a
physically more accurate model, with the standard MHD equations arising as its formal
limit in the Newtonian regime.

\vskip .1in
Despite its physical relevance, the presence of $\gamma\,\partial_{tt}B$ makes the
mathematical analysis of the MHD-wave system substantially more difficult than that of the standard MHD equations. In the classical MHD system the induction equation is parabolic,
so the magnetic field enjoys instantaneous smoothing. In contrast, the MHD-wave induction
equation \eqref{induction-full} is hyperbolic-parabolic in nature, and the term
$\gamma\,\partial_{tt}B$ acts as a ``bad" term in energy estimates: it contributes a
second-order time derivative that must be controlled without access to additional
dissipation. For instance, even in the two-dimensional case and for general initial data
(without any smallness assumption), it is unknown at present whether the $L^2$-norm of
the solution remains bounded for all time or may blow up in finite time. As a consequence,
many fundamental mathematical questions such as global well-posedness, long-time behavior, and
stability remain open for the MHD-wave system, even in the two-dimensional setting.

\vskip .1in
In this paper we focus on the following two-dimensional MHD-wave system with only
	vertical velocity damping,
\begin{align}\label{Equ}
	\left \{
	\begin{array}{lll}
		\partial_t u + \mu(0,u_2)^T + (u\cdot\nabla)u + \nabla p = (B\cdot\nabla)B, \\[5pt]
		\gamma\,\partial_{tt}B + \partial_t B - \eta\,\Delta B + (u\cdot\nabla)B = (B\cdot\nabla)u, \\[5pt]
		\nabla\cdot u = \nabla\cdot B = 0,
	\end{array}
	\right.
\end{align}
where $u=(u_1,u_2)^T$ and $B=(B_1,B_2)^T$ denote the velocity and magnetic fields,
$p$ is the scalar pressure, $\mu>0$ is the damping coefficient, $\eta>0$ is the magnetic
diffusivity, and $\gamma>0$ is the small parameter described above. The velocity equation
in \eqref{Equ} contains only vertical damping $\mu(0,u_2)^T$, meaning that horizontal
velocity is entirely undamped. This anisotropic structure arises naturally in physical
situations such as the dynamics of a conducting fluid in a strong external magnetic field
aligned with the vertical direction, where the Lorentz force predominantly damps motion
perpendicular to the field lines while leaving horizontal degrees of freedom essentially
free. Such anisotropic damping mechanisms appear in studies of magnetoconvection,
liquid-metal MHD, and the dynamics of the Earth's outer core; see, e.g.,
\cite{AMS,A,GBM,GB}.

\vskip .1in
For any constant $\alpha > 0$, the state $u^{(0)}\equiv \mathbf{0}$, $B^{(0)}=(0,\alpha)$ is an
exact steady solution of \eqref{Equ}, representing a uniform vertical background magnetic
field. A fundamental physical observation, confirmed by numerous experiments and numerical
simulations \cite{AMS,A,AL,GBM,GB}, is that a background magnetic field exerts a
stabilizing effect on electrically conducting fluids: perturbations tend to be suppressed
and the perturbed flow decays back toward the equilibrium. Setting $b = B - B^{(0)}$, the
perturbation $(u, b)$ around the background state satisfies
\begin{align}\label{type-wave-1}
	\left \{
	\begin{array}{lll}
		\partial_t u + \mu(0,u_2)^T + (u\cdot\nabla)u + \nabla p
		= (b\cdot\nabla)b + \alpha\,\partial_2 b, \\[5pt]
		\gamma\,\partial_{tt}b + \partial_t b - \eta\,\Delta b + (u\cdot\nabla)b
		= (b\cdot\nabla)u + \alpha\,\partial_2 u, \\[5pt]
		\nabla\cdot u = \nabla\cdot b = 0,
	\end{array}
	\right.
\end{align}
supplemented with initial data
\[
u(x,0) = u_0(x),\quad b(x,0) = b_0(x),\quad (\partial_t b)(x,0) = a_0(x).
\]

\vskip .1in
The stabilizing effect of a background magnetic field has been studied extensively for the
standard MHD equations ($\gamma=0$). In the work of Boardman, Lin and Wu \cite{BLW},
which is the most closely related MHD counterpart to the present paper, it was proved
rigorously that for the 2D inviscid and resistive MHD equations with partial velocity damping,
any small perturbation near a background vertical magnetic field decays to zero. A related
stability result for the 2D inviscid resistive system near a horizontal background field
was obtained by Ji and Wu \cite{JiWu}. Subsequent works have extended this stabilization
theory in several directions: Feng-Hafeez-Wu \cite{FHW} treated 2D MHD with only
vertical velocity dissipation and magnetic damping; Li-Wu-Xu \cite{LiWuXu} studied 2D
MHD with vertical velocity dissipation and horizontal magnetic diffusion;
Lai-Wu-Zhang \cite{LaWZ1} considered 2D MHD with mixed partial damping; and the 3D
setting with various partial dissipation configurations was investigated by
Abidi-Zhang \cite{AbidiZhang}, He-Xu-Yu \cite{HeXuYu}, Wu-Zhu \cite{WZ1},
Lin-Wu-Zhu \cite{LWZ1} and Lai-Wu-Zhang-Zhao \cite{LWZZ}, among others.
Global small solutions and decay estimates near a background magnetic field for 2D MHD
with only velocity dissipation were obtained by Lin-Xu-Zhang \cite{LXZ} and
Ren-Wu-Xiang-Zhang \cite{RWXZ}, and for 3D non-resistive MHD on periodic domains
by Pan-Zhou-Zhu \cite{PZZ}. The 2D MHD system with a velocity damping term was
studied by Wu-Wu-Xu \cite{WWX}. We refer the reader also to
\cite{CW,CZZ,LaWZ,LJWL,LWZ, WZ, ZZ} and the references therein.

\vskip .1in
There is comparatively little literature on the MHD-wave system itself. When the damping
term $(0,u_2)^T$ is replaced by full Laplacian dissipation $\Delta u$, Matsui-Nakasato-Ogawa
\cite{MNO} proved small-data global well-posedness and the singular limit $\gamma\to 0$
in Fourier-Sobolev spaces. Ji-Wu-Xu \cite{JWX} obtained global well-posedness of the
2D MHD-wave equations in a critical Sobolev setting when $\gamma$ and the initial
data satisfy a suitable joint smallness condition, and also established the singular limit
$\gamma \to 0$ recovering the standard MHD system. Xie-Yu \cite{XY} established
large-time behavior by spectral analysis, and Sun-Wang \cite{SW} proved global existence
and uniqueness under full velocity dissipation. Since the MHD-wave system is a physically
more accurate model than the standard MHD equations, it is natural to ask whether the
stabilizing effect of the background magnetic field persists in this more accurate setting.
This is the primary motivation of the present work.

\vskip .1in
Compared with the standard MHD case, the stability problem for \eqref{type-wave-1} is
considerably more delicate, mainly for two reasons. First, the term
$\gamma\,\partial_{tt}b$ changes the induction equation from parabolic to
hyperbolic-parabolic, giving rise to time derivatives of $b$ in energy estimates
that cannot be directly absorbed by the available dissipation. This forces the introduction
of a substantially more elaborate energy functional than what is used in the standard MHD
theory. Second, and more fundamentally, consider the linearization of \eqref{type-wave-1}
around the background state:
\begin{align}\label{linearized system}
	\left\{
	\begin{array}{lll}
		\partial_t u +\mu (0,u_2)^T = \alpha\partial_2 b, \\[5pt]
		\gamma \partial_{tt} b + \partial_t b -\eta \Delta b = \alpha\partial_2 u.
	\end{array}
	\right.
\end{align}
Differentiating \eqref{linearized system} in time and making suitable substitutions,
one can convert \eqref{linearized system} into a system in which $u$ and $b$ satisfy
decoupled third-order-in-time equations:
\begin{align}\label{linearized system1}
	\left\{
	\begin{array}{lll}
		\gamma\partial_{ttt} u + \partial_{tt} u + \mu\gamma\partial_{tt}(0,u_2)^T
		- \eta\Delta\partial_t u + \mu\partial_t(0,u_2)^T
		- \mu\eta\Delta(0,u_2)^T - \alpha^2\partial_{22}u = 0,\\[5pt]
		\gamma\partial_{ttt} b + \partial_{tt} b + \mu\gamma\partial_{tt}(0,b_2)^T
		- \eta\Delta\partial_t b + \mu\partial_t(0,b_2)^T
		- \mu\eta\Delta(0,b_2)^T - \alpha^2\partial_{22}b = 0.
	\end{array}
	\right.
\end{align}
The first components $u_1$ and $b_1$ share the same characteristic polynomial
\[
\gamma\lambda_1^3 + \lambda_1^2 + \eta|\xi|^2\lambda_1 + \alpha^2\xi_2^2 = 0,
\]
while the second components $u_2$ and $b_2$ are governed by a structurally different cubic,
\[
\gamma\lambda_2^3 + (1+\mu\gamma)\lambda_2^2
+ (\mu + \eta|\xi|^2)\lambda_2 + \mu\eta|\xi|^2 + \alpha^2\xi_2^2 = 0,
\]
in which the additional terms $\mu\gamma\lambda_2^2$, $\mu\lambda_2$, and $\mu\eta|\xi|^2$
reflect the partial damping acting on the second velocity component. For the standard MHD
equations the corresponding polynomials are quadratic, whose roots are readily analyzed.
For the cubics here, characterizing the behavior of the roots across different frequency
regimes is substantially harder, and the Fourier-side approach via Duhamel's principle
employed in \cite{BLW} does not extend to the present setting.

\vskip .1in
To overcome these difficulties, our strategy proceeds as follows. We first observe that the vertical damping $\mu(0,u_2)^T$ in the velocity equation provides a weak, anisotropic dissipation mechanism. To see this precisely, we apply the Leray-Helmholtz projection operator $\mathbb{P}:= I - \nabla\Delta^{-1}\nabla\cdot$ to the velocity equation in
\eqref{type-wave-1}. Using the identity
\begin{align*}
	\mathbb{P}(0,u_2)^T
	= (0,u_2)^T - \nabla\Delta^{-1}\nabla\cdot(0,u_2)^T
	= \Delta^{-1}\partial_1^2 u
	= -\mathcal{R}_1^2 u,
\end{align*}
where $\mathcal{R}_1 = \partial_1(-\Delta)^{-1/2}$ is the first Riesz transform, the projected
velocity equation becomes
\begin{align*}
\partial_t u + \mu\Delta^{-1}\partial_1^2 u = \alpha\partial_2 b + \mathbb{P}\bigl(-(u\cdot\nabla)u + (b\cdot\nabla)b\bigr).
\end{align*}
The term $\mu\,\Delta^{-1}\partial_1^2 u$ acts as a weak dissipation in the
$x_1$-direction (one order weaker than a full Laplacian), but provides no direct control
in the $x_2$-direction. However, by carefully examining the wave structure in the
linearized system \eqref{linearized system}, we observe that the background magnetic
field $\alpha\partial_2 b$ in the coupling term actually generates an effective weak
dissipation in the $x_2$-direction, as reflected in the term $\alpha^2\partial_{22}u$
appearing in \eqref{linearized system1}. This dissipation is one order lower than what
standard diffusion would provide.
These considerations motivate the construction of an energy functional
$\mathcal{E}_0(t)$ consisting of two parts,
\begin{align}\label{E-0}
	\mathcal{E}_0(t) = \mathcal{E}_{01}(t) + \mathcal{E}_{02}(t),
\end{align}
where
\begin{align}
	\mathcal{E}_{01}(t)
	={}& \sup_{0\leq\tau\leq t}\Bigl(\|(u,b)(\tau)\|_{H^3}^2
	+ 2\gamma^2\|\partial_\tau b(\tau)\|_{H^3}^2
	+ 2\gamma\eta\|\nabla b(\tau)\|_{H^3}^2\Bigr)\nonumber\\
	&+ 2\int_{0}^t\Bigl(\mu\|u_2(\tau)\|_{H^3}^2
	+ \eta\|\nabla b(\tau)\|_{H^3}^2
	+ \gamma\|\partial_\tau b(\tau)\|_{H^3}^2\Bigr)\,d\tau,\label{E00}\\
	\mathcal{E}_{02}(t)
	={}& \alpha\int_{0}^t\|\partial_2 u(\tau)\|_{H^2}^2\,d\tau.\label{E01}
\end{align}
The first part $\mathcal{E}_{01}(t)$ captures the basic $H^3$ energy of $(u,b)$ together
with the hyperbolic contributions $\gamma^2\|\partial_t b\|_{H^3}^2$ and
$\gamma\eta\|\nabla b\|_{H^3}^2$ required to handle the term $\gamma\partial_{tt}b$.
The second part $\mathcal{E}_{02}(t)$ encodes the weak $x_2$-directional dissipation
generated by the background field coupling; its inclusion is essential for bounding the
triple products arising from the nonlinearity $(u\cdot\nabla)u$ in the $H^3$ energy
estimates. Our main efforts are then devoted to establishing the energy inequality
\begin{align}\label{E0}
	\mathcal{E}_0(t) \leq C\mathcal{E}_0(0) + C\mathcal{E}_0^{3/2}(0)
	+ C\mathcal{E}_0^{3/2}(t) + C\mathcal{E}_0^2(t).
\end{align}
Applying a standard bootstrapping argument then yields the desired stability result
stated in Theorem~\ref{Thm1} below.
\begin{Thm}\label{Thm1}
	Assume that the initial data $(u_0,b_0,a_0)\in H^3(\mathbb{R}^2)\times H^4(\mathbb{R}^2)\times H^3(\mathbb{R}^2)$ satisfies $\nabla\cdot u_0=\nabla\cdot b_0=\nabla\cdot a_0=0$. Then there exist sufficiently small $\epsilon(\gamma,\mu,\eta),\alpha_0(\gamma,\mu,\eta)>0$ such that, if
	\begin{align}\label{intia}
		\|u_0\|_{H^3(\mathbb{R}^2)}+\|b_0\|_{H^4(\mathbb{R}^2)}+\|a_0\|_{H^3(\mathbb{R}^2)}\leq \epsilon,\quad 0<\alpha\leq\alpha_0,
	\end{align}
	then \eqref{type-wave-1} has a unique global solution $(u,b)$ satisfying
	\begin{align}\label{stability}
		&\|(u,b)\|^2_{H^3(\mathbb{R}^2)}+2\gamma^2\|\partial_tb\|^2_{H^3(\mathbb{R}^2)}
		+2\gamma\eta\|\nabla b\|^2_{H^3(\mathbb{R}^2)}\nonumber\\
		&+\int_0^t\left(\mu\|u_2(\tau)\|^2_{H^3(\mathbb{R}^2)}+\eta\|\nabla b(\tau)\|^2_{H^3(\mathbb{R}^2)}
		+\gamma\|\partial_{\tau}b(\tau)\|^2_{H^3(\mathbb{R}^2)}+\alpha\|\partial_2u(\tau)\|_{H^2(\mathbb{R}^2)}^2\right)d\tau
		\leq C\epsilon^2
	\end{align}
	for any $t>0$ and some universal constant $C>0$.
\end{Thm}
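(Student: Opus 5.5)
The argument is a bootstrap built on the a priori inequality \eqref{E0}. Local well-posedness of \eqref{type-wave-1} in the class $u\in C([0,T];H^3)$, $b\in C([0,T];H^4)$, $\partial_t b\in C([0,T];H^3)$ follows from a standard Friedrichs/energy method. The damped-wave part of the $b$-equation is symmetrizable once the good unknown $\gamma\partial_t b + b$ is used, and the $u$-equation is a transport equation. Uniqueness follows from an $L^2$ estimate on the difference of two solutions. It therefore suffices to prove \eqref{E0}. Then, if $\mathcal{E}_0(t)\le \delta$ with $C\delta^{1/2}+C\delta\le 1/2$, we get $\mathcal{E}_0(t)\le 2C(\mathcal{E}_0(0)+\mathcal{E}_0^{3/2}(0))\le C\epsilon^2<\delta/2$, and continuity closes the bootstrap. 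Note that $\mathcal{E}_0(0)\lesssim \|u_0\|_{H^3}^2+\|b_0\|_{H^4}^2+\|a_0\|_{H^3}^2$, which is why $b_0\in H^4$ is needed.

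\emph{Step 1: the $\mathcal{E}_{01}$ estimate.} Apply $\partial^k$ with $|k|\le3$ and take the $L^2$ inner product of the $u$-equation with $\partial^k u$ and of the $b$-equation with $\partial^k b$. This yields $\mu\|u_2\|_{H^3}^2$ and $\eta\|\nabla b\|_{H^3}^2$. The term $\gamma(\partial_{tt}b,b)$ equals $\gamma\frac{d}{dt}(\partial_t b,b)-\gamma\|\partial_t b\|^2$. To compensate, I also pair the $b$-equation with $2\gamma\partial^k\partial_t b$, which gives $\frac{d}{dt}\bigl(\gamma^2\|\partial_tb\|^2+\gamma\eta\|\nabla b\|^2\bigr)+2\gamma\|\partial_tb\|^2$. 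Combining the two with suitable weights produces exactly the coefficients $2\gamma^2$, $2\gamma\eta$, $\gamma$ in \eqref{E00}.

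The linear couplings $\alpha(\partial_2 b,u)+\alpha(\partial_2u,b)$ cancel. The leftover coupling $2\gamma\alpha(\partial_2\partial^k u,\partial^k\partial_tb)$ is the problematic one. I plan to substitute $\partial_t u$ from the velocity equation after integrating by parts in time, or alternatively to absorb it using $\alpha\le\alpha_0$ small together with the $\mathcal{E}_{02}$ dissipation. The $\alpha_0$ smallness condition in the statement suggests the latter: at top order the term is bounded by $\alpha\|\partial_2u\|_{H^2}\|\partial_t b\|_{H^4}$, so the $b$-equation is also needed to trade $\partial_tb$ derivatives for $\nabla b$.

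The nonlinear terms are handled as follows. The terms $(b\cdot\nabla)b$ and $(b\cdot\nabla)u$ cancel at top order, and their commutators are bounded by $\|\nabla b\|_{H^3}$ times other quantities. The terms involving $(u\cdot\nabla)b$ and $\partial_t b$ use $\gamma\|\partial_tb\|_{H^3}^2$ together with $\|\nabla b\|_{H^3}^2$.

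\emph{Step 2: the $\mathcal{E}_{02}$ estimate.} The velocity equation gives $\alpha\partial_2 b=\partial_tu+\mu(0,u_2)^T+\text{nonlinear}+\nabla p$. Pairing the $b$-equation with $-\partial_2\partial^k u$ for $|k|\le2$ and using the velocity equation to replace $\partial_t u$ yields
\[
\alpha\|\partial_2u\|_{H^2}^2 = -\frac{d}{dt}\bigl((\gamma\partial_tb+b),\partial_2 u\bigr)_{H^2} + \dots,
\]
where the remaining terms are of the form $(\gamma\partial_tb+b,\partial_2\partial_tu)$ and are rewritten through the $u$-equation. Since $\partial_t u=\alpha\partial_2b-\mu\mathbb{P}(0,u_2)+\dots$, these give $\alpha\|\partial_2b\|^2$ and $\mu$-terms controlled by $\int\|u_2\|_{H^3}\|\nabla b\|_{H^3}$. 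The terms $\eta(\Delta b,\partial_2u)$ are bounded by $\|\nabla b\|_{H^3}\|\partial_2u\|_{H^2}$ and absorbed via Young's inequality into the $\mathcal{E}_{01}$ dissipation. The boundary terms are controlled by $\sup\|(u,b,\gamma\partial_tb)\|_{H^3}^2$, which is fine once $\mathcal{E}_{02}$ is multiplied by a small constant relative to $\mathcal{E}_{01}$.

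\emph{Main obstacle.} The hardest term is the top-order transport triple product $\int\partial^k(u\cdot\nabla u)\cdot\partial^ku$ with $u_1$ undamped. After commutation, the bad part is $\int\partial_1u_1\,\partial_1^3u_1\,\partial_1^3u_1$, which by incompressibility equals $-\int\partial_2u_2(\partial_1^3u_1)^2$. This can be bounded by $\|\partial_2 u_2\|_{L^\infty}\|u\|_{H^3}^2$, but that quantity is not time-integrable from the available dissipation alone. I would instead use $\partial_1^3u_1=-\partial_1^2\partial_2u_2$ to write the product in terms of $u_2$ and $\partial_2u$, and bound it via the anisotropic inequality $\|fgh\|_{L^1}\lesssim\|f\|^{1/2}\|\partial_1f\|^{1/2}\|g\|^{1/2}\|\partial_2g\|^{1/2}\|h\|$. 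This produces $\mathcal{E}_0^{3/2}$.

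The remaining dangerous pair consists of the term $(u\cdot\nabla)u$ paired against the $\gamma\partial_t b$ multiplier and the corresponding term from the $b$-equation in Step 2. As the abstract suggests, these should be combined before estimating, so that their leading parts cancel. The leftover terms are cubic or quartic, giving $\mathcal{E}_0^{3/2}(t)+\mathcal{E}_0^2(t)$. Finally, the $\mathcal{E}_0^{3/2}(0)$ term arises from the initial boundary terms in Step 2.
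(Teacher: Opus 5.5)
There is a genuine gap in your Step 1, and the term you single out as the main obstacle is not the real one.

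\textbf{The term you never treat.} When the $\partial_i^3$-differentiated magnetic equation is paired with $2\gamma\partial_t\partial_i^3 b$, the nonlinearity $(b\cdot\nabla)u$ produces
\[
A_1=2\gamma\sum_{i=1}^2\int_{\R^2}(b\cdot\nabla)\partial_i^3u\cdot\partial_t\partial_i^3b\,dx .
\]
It carries four derivatives of $u$, and integrating by parts in space only moves the loss onto $\partial_t b$. It has no partner to cancel with. The cancellation between $(b\cdot\nabla)b$ and $(b\cdot\nabla)u$ that you invoke occurs only for the pairings with $\partial_i^3u$ and $\partial_i^3b$, not for the $\partial_t\partial_i^3b$ multiplier.

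\textbf{Neither option for $A_2$ closes on its own.} Consider the linear coupling $A_2=2\gamma\alpha\sum_i\int\partial_i^3\partial_2u\cdot\partial_t\partial_i^3b\,dx$.
\begin{itemize}
\item The bound $\alpha\|\partial_2u\|_{H^2}\|\partial_tb\|_{H^4}$ needs $\partial_tb\in H^4$, equivalently $\partial_2u\in H^3$. Neither is in $\mathcal{E}_0$, and the magnetic equation cannot supply it, since its forcing $\alpha\partial_2u$ is only in $H^2$.
\item Integrating by parts in time and substituting $\partial_tu$ is the right first move. It gives $-2\gamma\alpha\frac{d}{dt}\sum_i\int\partial_i^3u\cdot\partial_i^3\partial_2b\,dx$, the term $2\gamma\alpha^2\sum_i\|\partial_i^3\partial_2b\|_{L^2}^2$, harmless $\mu$- and $(b\cdot\nabla)b$-terms, and a vanishing pressure term. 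Absorbing $2\gamma\alpha^2\sum_i\|\partial_i^3\partial_2b\|_{L^2}^2$ into $\eta\|\nabla b\|_{H^3}^2$, together with coercivity of the corrected energy, is where $\alpha\le\alpha_0$ enters, not absorption into $\mathcal{E}_{02}$.
\item However, this substitution also leaves $-2\gamma\alpha\sum_i\int\partial_i^3((u\cdot\nabla)u)\cdot\partial_i^3\partial_2b\,dx$. This again needs either $\nabla^4u$ or $\nabla^5 b$.
\end{itemize}

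\textbf{Your proposed cancellation partner does not exist.} You look for it ``from the $b$-equation in Step 2.'' But Step 2 works at the level of $\partial_2\nabla^2u$, has no derivative loss, and enters with a small prefactor, so it cannot cancel a top-order term with coefficient $2\gamma\alpha$.

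\textbf{Your ``main obstacle'' is routine.} The transport product satisfies $|\int\partial_i^3((u\cdot\nabla)u)\cdot\partial_i^3u|\le C\|u\|_{H^3}\|\nabla u\|_{H^2}^2$. Since $|\partial_1u|=|\nabla u_2|$, we have $\|\nabla u\|_{H^2}^2=\|\nabla u_2\|_{H^2}^2+\|\partial_2u\|_{H^2}^2$, which is time-integrable once $\mathcal{E}_{02}$ is included.

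\textbf{The missing idea.} $A_1$ and $A_2$ must be treated together inside the top-order estimate; this is Proposition~\ref{A1b}. The chain is as follows.
\begin{enumerate}
\item Move $\partial_t$ in $A_1$ onto $u$. This creates $-2\gamma\frac{d}{dt}\sum_i\int(b\cdot\nabla)\partial_i^3b\cdot\partial_i^3u\,dx$ and a term containing $\partial_t\partial_i^3u$.
\item Substitute $\partial_tu$ from the velocity equation. Everything is controllable except the $(u\cdot\nabla)u$ contribution. After integrations by parts it reduces to $-2\gamma\sum_i\int\partial_i^3((b\cdot\nabla)u)\cdot(u\cdot\nabla)\partial_i^3b\,dx$.
\item Replace $(b\cdot\nabla)u$ by $\gamma\partial_{tt}b+\partial_tb+(u\cdot\nabla)b-\eta\Delta b-\alpha\partial_2u$ from the magnetic equation.
\begin{itemize}
\item The $\gamma\partial_{tt}b$ piece becomes $-2\gamma^2\frac{d}{dt}\sum_i\int\partial_t\partial_i^3b\cdot(u\cdot\nabla)\partial_i^3b\,dx$, plus a term controlled by $\|\partial_tu\|_{L^\infty}$ (via the velocity equation and an $L^\infty$ bound on $\nabla p$).
\item The $-\alpha\partial_2u$ piece, after integrations by parts and up to commutators, yields $+2\gamma\alpha\sum_i\int\partial_i^3((u\cdot\nabla)u)\cdot\partial_i^3\partial_2b\,dx$.
\end{itemize}
\item This last term is exactly the opposite of the leftover from $A_2$. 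Equivalently, substituting $(u\cdot\nabla)u$ from the velocity equation once more produces $-A_2$.
\end{enumerate}
The three correctors $(b\cdot\nabla)\partial_i^3b\cdot\partial_i^3u$, $\gamma\partial_t\partial_i^3b\cdot(u\cdot\nabla)\partial_i^3b$ and $\alpha\partial_i^3u\cdot\partial_i^3\partial_2b$ must then be built into the energy. Their coercivity uses smallness of the solution and of $\alpha$. Without this chain, or an equivalent one, \eqref{E0} does not close.

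\textbf{Local existence.} The same term undercuts your claim that local existence follows from symmetrizing with $\gamma\partial_tb+b$. Pairing $(b\cdot\nabla)u$ with $\partial^3(\gamma\partial_tb+b)$ produces $\tfrac12A_1$, which has no symmetric partner in the velocity equation.
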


To obtain precise decay rates for solutions in the whole space $\mathbb{R}^2$, it is
standard to impose additional assumptions on the initial data in a space of negative
Sobolev index or in a Lebesgue space with index less than $2$. We recall the definition
of the fractional Laplacian: for any real number $\beta$,
$$\widehat{\Lambda^{\beta}f}(\xi)=|\xi|^{\beta}\widehat{f}(\xi), \qquad \xi=(\xi_1,\xi_2).$$
The large-time behavior of the solutions depends on the eigenvalues of
\eqref{linearized system1}. As discussed above, characterizing the behavior of the
Fourier transform of $(u,b)$ across different frequency regimes is difficult, and direct
spectral analysis fails to yield the desired decay rates. To circumvent this difficulty,
we adopt a time-weighted energy method. Specifically, we first introduce the
negative-index Sobolev energy functional
\begin{align}\label{E-1}
	\mathcal{E}_1(t)=\mathcal{E}_{11}(t)+\mathcal{E}_{12}(t),
\end{align}
where
\begin{align*}
	&\mathcal{E}_{11}(t)=\sup_{0\leq\tau\leq t}\left(\|\Lambda^{-1} u(\tau)\|_{L^2}^2
	+\|\Lambda^{-1} b(\tau)\|_{L^2}^2+2\gamma^2\|\partial_\tau \Lambda^{-1} b(\tau)\|_{L^2}^2
+2\gamma\eta\|b(\tau)\|^2_{L^2}\right)\nonumber\\
	&\qquad \ \ \ \ \ \ +\int_{0}^t\left(\mu\|\Lambda^{-1} u_2(\tau)\|_{L^2}^2+\eta\| b(\tau)\|_{L^2}^2
	+\gamma\|\partial_\tau\Lambda^{-1} b(\tau)\|_{L^2}^2
	\right)d\tau,\\
    &\mathcal{E}_{12}(t)=\alpha\int_{0}^t
	\|\Lambda^{-1}\partial_2 u(\tau)\|_{L^2}^2d\tau.
\end{align*}
We note that $\mathcal{E}_1(t)$ captures the low-frequency behavior of the solution. We then introduce four
time-weighted energy functionals: for $k=0,1,2$,
\begin{align*}
	E_k(t)=&\sup_{0\leq\tau\leq t}(1+\tau)^{k+1}\left(\|\nabla^{k}u(\tau)\|_{H^{3-k}}^2
	+\|\nabla^{k}b(\tau)\|_{H^{4-k}}^2
	+2\gamma^2\|\partial_\tau\nabla^{k} b(\tau)\|_{H^{3-k}}^2\right)\\
	&+\int_{0}^t(1+\tau)^{k+1}\left(\mu\|\nabla^{k} u_2(\tau)\|_{H^{3-k}}^2
	+\eta\|\nabla^{k+1} b(\tau)\|_{H^{3-k}}^2\right.\\
	&+\left.\gamma\|\partial_\tau \nabla^{k}b(\tau)\|_{H^{3-k}}^2
	+\alpha\|\nabla^{k} \partial_2u(\tau)\|_{H^{2-k}}^2\right)d\tau,
\end{align*}
and
\begin{align*}
	E_3(t)=&\sup_{0\leq\tau\leq t}(1+\tau)^4\left(\|\nabla^3 u_2(\tau)\|_{L^2}^2
	+\|\nabla^3 b_2(\tau)\|_{H^1}^2
	+2\gamma^2\|\partial_\tau \nabla^3 b_2(\tau)\|_{L^2}^2\right)\\
	&+\int_{0}^t(1+\tau)^4\left(\mu\|\nabla^3 u_2(\tau)\|_{L^2}^2
	+\eta\|\nabla^4 b_2(\tau)\|_{L^2}^2
	+\gamma\|\partial_\tau\nabla^3 b_2(\tau)\|_{L^2}^2\right)d\tau,
\end{align*}
which encode the algebraic decay rates at each derivative level. The functional $E_3$
captures the faster decay rate of the second components $u_2$ and $b_2$ at the highest
derivative level. By establishing the bounds for $E_k(k=0,1,2,3)$, the precise decay rates will be obtained and stated  in Theorem~\ref{Thm2} below.
\begin{Thm}\label{Thm2}
	Under the assumptions of Theorem~\ref{Thm1}, suppose in addition that
	\begin{align*}
		\Lambda^{-1} u_0,\Lambda^{-1} b_0,\Lambda^{-1}a_0
\in L^2(\R^2).
	\end{align*}
	Then for $k=0,1,2$ and any $t>0$, the global solution $(u,b)$ of \eqref{type-wave-1}
	obeys the decay estimates
	\begin{align*}
		\|\nabla^{k} u(t)\|_{H^{3-k}(\mathbb{R}^2)}+\|\nabla^{k} b(t)\|_{H^{4-k}(\mathbb{R}^2)}
		+\|\partial_t \nabla^{k} b(t)\|_{H^{3-k}(\mathbb{R}^2)}
		\leq C(1+t)^{-\frac{1+k}{2}},
	\end{align*}
	and the second components satisfy the faster decay rate
	\begin{align*}
		\|\nabla^3 u_2(t)\|_{L^2(\mathbb{R}^2)}+\|\nabla^3 b_2(t)\|_{H^1(\mathbb{R}^2)}
		+\|\partial_t \nabla^3 b_2(t)\|_{L^2(\mathbb{R}^2)}\leq C(1+t)^{-2},
	\end{align*}
	where $C>0$ is a universal constant independent of $\epsilon$ and $t$.
\end{Thm}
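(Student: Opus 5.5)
The argument is a time-weighted energy method built on the global bound of Theorem~\ref{Thm1}. It runs in three stages: first show that the negative-order functional $\mathcal{E}_1(t)$ stays bounded, then bound $E_0,E_1,E_2$ by induction on $k$, and finally bound $E_3$ for the second components.

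\textbf{Step 1: the $\Lambda^{-1}$ level.} Apply $\Lambda^{-1}$ to \eqref{type-wave-1} and repeat the $L^2$-level version of the $\mathcal{E}_0$ estimate. Test the $u$-equation with $\Lambda^{-1}u$. Test the $b$-equation with $\Lambda^{-1}b$ and with $\gamma\partial_t\Lambda^{-1}b$, combining the weights so that the linear coupling terms $\alpha\partial_2 b$ and $\alpha\partial_2 u$ cancel. Then add a cross term $-\langle \Lambda^{-1}u,\partial_2\Lambda^{-1}(b+\gamma\partial_t b)\rangle$, multiplied by a small constant, to recover the dissipation $\alpha\|\Lambda^{-1}\partial_2 u\|_{L^2}^2$ in $\mathcal{E}_{12}$. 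This cross term is the same device that produced $\mathcal{E}_{02}$, and it is where the smallness of $\alpha$ is used.

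The nonlinear terms are written in divergence form, e.g.\ $\Lambda^{-1}\mathbb{P}\nabla\cdot(u\otimes u)$. The 2D Hardy--Littlewood--Sobolev inequality gives $\|\Lambda^{-1}\nabla\cdot(f g)\|_{L^2}\lesssim\|fg\|_{L^2}$. The pairing $\langle \nabla\cdot(u\otimes u),\Lambda^{-2}u\rangle$ is bounded by $\|u\|_{L^4}^2\|\Lambda^{-1}u\|_{L^2}$. Using Theorem~\ref{Thm1} together with the anisotropic splitting $u=(u_1,u_2)$, where $u_2$ is damped and $\partial_2 u$ is dissipated, we aim for
\begin{align*}
\mathcal{E}_1(t)\le C\mathcal{E}_1(0)+C\epsilon\,\mathcal{E}_1(t)+C\epsilon\,\sup_\tau\|\Lambda^{-1}u\|_{L^2}\int_0^t\|u\|_{L^4}^2\,d\tau .
\end{align*}

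\textbf{Step 2: time weights, $k=0,1,2$.} Apply $\nabla^k$ to the $H^{3-k}$-level energy identity used for Theorem~\ref{Thm1}, multiply by $(1+t)^{k+1}$, and integrate in time. The derivative of the weight creates the term $(k+1)\int_0^t(1+\tau)^k\|\nabla^k(u,b)\|^2\,d\tau$. For $b$, and for the damped parts $u_2$ and $\partial_2 u$, this term is absorbed by the dissipation of $E_{k-1}$, since $\|\nabla^k f\|^2$ is one of the integrands of $E_{k-1}$. For $k=0$ it is absorbed instead by interpolating between $\mathcal{E}_1$ and the dissipation, e.g.\ $\|b\|^2\le\|\Lambda^{-1}b\|\,\|\nabla b\|$.

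\textbf{Main obstacle.} The undamped horizontal component $\|\nabla^k u_1\|^2$ has no direct dissipation. Since $\partial_1u_1=-\partial_2u_2$, we have $\widehat{u_1}=-(\xi_2/\xi_1)\widehat{u_2}$. The plan is to control $u_1$ through the $\partial_2 u$ dissipation together with the divergence-free relation, i.e.\ to bound $\|\nabla^k u\|^2\lesssim\|\nabla^{k-1}\partial_2u\|\,\|\nabla^{k+1}u\|+\|\nabla^k u_2\|^2$ by Fourier splitting on the region $|\xi_1|\gtrless|\xi_2|$. The first piece is time-integrable with weight $(1+\tau)^k$ by $E_{k-1}$ and a lower-weight version of $\mathcal{E}_{02}$. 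I expect this to be the delicate point. If it fails, the fallback is to derive the weighted $u_1$ dissipation from the Lorentz-force cross term with weight $(1+t)^{k+1}$.

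The nonlinear terms are bounded by $C\epsilon\,(E_0+\dots+E_k)$ plus products such as $E_j^{1/2}E_k$, using the cancellation structure of the paper. The key cancellation is between $\langle\nabla^3(u\cdot\nabla u),\nabla^3u\rangle$ and the magnetic triple products. A continuity argument in $\epsilon$ then gives $E_k(t)\le C$ and hence the decay rate $(1+t)^{-(1+k)/2}$.

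\textbf{Step 3: $E_3$.} Apply $\nabla^3$ to the equations for the second components only. Here the damping $\mu u_2$ gives the full dissipation $\mu\|\nabla^3u_2\|^2$, and $b_2$ is dissipated by $\eta\|\nabla^4 b_2\|^2$. So the weight derivative $4(1+\tau)^3\|\nabla^3(u_2,b_2)\|^2$ is absorbed by $E_2$, whose integrand is $(1+\tau)^3\|\nabla^2u\|_{H^1}^2$ for the $u_2$ part and contains $\|\nabla^3 b\|$. The coupling terms $\alpha\partial_2b_2$ and $\alpha\partial_2u_2$ cancel as before. The pressure contribution $\partial_2 p$ is handled with $p=\Delta^{-1}\nabla\cdot\nabla\cdot(b\otimes b-u\otimes u)$, bounded by $E_2$ and the decay of $u$. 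This closes $E_3\le C$ and gives the $(1+t)^{-2}$ rate.
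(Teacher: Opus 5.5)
\textbf{Step 3 fails as written.} When you apply $\nabla^3$ to the \emph{second component} of the velocity equation and test with $\nabla^3u_2$, the pressure does not drop out, and it is not purely nonlinear. Taking the divergence gives $\Delta p=-\sum_{i,j}\partial_i\partial_j(u_iu_j-b_ib_j)-\mu\partial_2u_2$, so $p$ contains the linear piece $p_L=-\mu\Delta^{-1}\partial_2u_2$, which your formula for $p$ omits.

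This piece contributes $\int_{\R^2}\nabla^3\partial_2p_L\cdot\nabla^3u_2\,dx=-\mu\|\nabla^2\partial_2u_2\|_{L^2}^2$. The net linear damping is therefore only $\mu\|\nabla^2\partial_1u_2\|_{L^2}^2$, not $\mu\|\nabla^3u_2\|_{L^2}^2$; this is the degeneracy $\mathbb{P}(0,u_2)^T=-\mathcal{R}_1^2u$ from the introduction. Working with the full vector and $\|\nabla^3u_2\|_{L^2}=\|\nabla^2\partial_1u\|_{L^2}$ removes the pressure but gives the same dissipation.

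Consequently the $\int(1+\tau)^4\mu\|\nabla^3u_2\|^2$ part of $E_3$ is not produced by your computation, and every term you meant to absorb into it must be redone. A leftover $(1+\tau)^4\|\nabla^2\partial_2u_2\|_{L^2}^2$ is acceptable only when multiplied by a factor decaying at least like $(1+\tau)^{-1}$, since $E_2$ controls $\nabla^2\partial_2u$ only with weight $(1+\tau)^3$.

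Likewise, ``the coupling terms cancel as before'' holds only for the pairing of $\nabla^3u_2$ with $\nabla^3b_2$. Because $E_3$ contains $\gamma^2\|\partial_t\nabla^3b_2\|_{L^2}^2$, you need the multiplier $2\gamma\partial_t\nabla^3b_2$. That multiplier produces componentwise analogues of $A_1$ and $A_2$, each with four derivatives on $u_2$. Removing them means rerunning the chain of Proposition~\ref{A1b}, and that chain used $\int\partial_i^3\nabla p\cdot\partial_i^3\partial_2b\,dx=0$ (right after \eqref{2152}). That identity relies on $\nabla\cdot b=0$ and fails componentwise. The surviving term $\int\nabla^3\partial_2p\cdot\nabla^3\partial_2b_2\,dx$ must be handled separately, in particular by combining its linear part with the damping into $\mu\Delta^{-1}\partial_1^2$. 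The paper gives no details here (it asserts $E_3\le C$ by analogy with Proposition~\ref{Prop2}), so this is work you would have to supply.

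\textbf{Steps 1--2} follow the paper's architecture: the $\Lambda^{-1}$ functional with the cross term giving $\alpha\int\|\Lambda^{-1}\partial_2u\|^2$, then $(1+t)^{k+1}$-weighted versions of the Theorem~\ref{Thm1} estimates with weighted cross terms. The paper goes from $E_0$ directly to $E_2$, absorbing $(1+t)^2\|\nabla^2(\cdot)\|^2$ by interpolating between the dissipation of $E_0$ and that of $E_2$, and then gets $E_1\le CE_0^{1/2}E_2^{1/2}$ for free. Your induction $E_0\to E_1\to E_2$ costs a separate $E_1$ estimate but also works. Three corrections are needed:
\begin{itemize}
\item There is no ``main obstacle.'' For divergence-free $u$ one has exactly $\|\nabla^ku\|_{L^2}^2=\|\nabla^ku_2\|_{L^2}^2+\|\nabla^{k-1}\partial_2u\|_{L^2}^2$ for $k\ge1$, and $\|u_1\|_{L^2}=\|\Lambda^{-1}\partial_2u\|_{L^2}$. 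Your product inequality is not scale-invariant and is false at low frequency.
\item At $k=0$, the interpolation $\|b\|^2\le\|\Lambda^{-1}b\|\,\|\nabla b\|$ gives no time integrability. What closes this case is the time-integrated part of $\mathcal{E}_1$, namely $\int_0^t(\eta\|b\|_{L^2}^2+\alpha\|u_1\|_{L^2}^2)\,d\tau$.
\item The key cancellation is misidentified. The real obstruction is that the $2\gamma\partial_t\nabla^3b$ multiplier produces $A_1$ and the \emph{linear} term $A_2$, both with four derivatives on $u$. Neither is $C\epsilon$ times dissipation; they are removed only through Proposition~\ref{A1b}. Since the top order on $u$ is $\nabla^3$ at every level $k$, that proposition must be invoked each time, with the weight derivatives of its correctors controlled as in \eqref{E1-22} and \eqref{T3}--\eqref{T4}.
\end{itemize}
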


\begin{Rem}
	The decay rates in Theorem~\ref{Thm2} are consistent with those of the 2D heat equation
	and are optimal. Moreover, the faster rate $(1+t)^{-2}$ for the second components
	$u_2$ and $b_2$ at the highest derivative level reflects the additional regularization
	provided by the vertical damping and the background magnetic field coupling. To the best
	of our knowledge, Theorems~\ref{Thm1} and~\ref{Thm2} together constitute the first
	rigorous result establishing the stabilizing effect of a background magnetic field for
	the MHD-wave equations.
\end{Rem}

For Theorem \ref{Thm1}, we now describe the main ideas in the proofs of the energy inequalities \eqref{E0}. Some of the nonlinear terms arising in the proof of \eqref{E0} cannot be bounded
directly in terms of $\mathcal{E}_{01}(t)$ and $\mathcal{E}_{02}(t)$. To handle
these, we exploit the full algebraic structure of \eqref{type-wave-1}. The most
challenging terms are
\begin{align}
	A_1 = 2\gamma\sum_{i=1}^2\int_{\mathbb{R}^2}
	(b\cdot\nabla)\partial_i^3 u\cdot\partial_t\partial_i^3 b\,dx,
	\quad
	A_{2} = 2\gamma\alpha\sum_{i=1}^2\int_{\mathbb{R}^2}
	\partial_i^3\partial_2 u\cdot\partial_t\partial_i^3 b\,dx.\label{def A12}
\end{align}
The central difficulty is the presence of the fourth-order term $\nabla\partial_i^3 u$
in both integrals, whose order exceeds the $H^3$ energy level. Such a term does not
arise in the standard energy estimates for the MHD equations, and the absence of
velocity dissipation makes it considerably harder to handle than in the MHD setting.
Although the Leray projection argument yields partial dissipation in the $x_1$-direction,
namely $\int_0^t\|\partial_1 u(\tau)\|_{H^2}^2\,d\tau$, this is far from sufficient to
control $\nabla\partial_i^3 u$. A naive integration by parts merely transfers the
fourth-order derivative onto $\partial_t b$, which is equally problematic. To resolve
this derivative-loss issue, we uncover new cancellations within the nonlinear structure
of \eqref{type-wave-1}.
First, integrating $A_1$ by parts gives rise to the following troublesome term
\begin{align*}
	2\gamma\sum_{i=1}^2\int_{\mathbb{R}^2}
	(b\cdot\nabla)\partial_i^3 b\cdot\partial_t \partial_i^3u\,dx.
\end{align*}
To address this term, we substitute $\partial_t u$ using the velocity equation of
\eqref{type-wave-1}, and then
 the difficulty reduces to
\begin{align*}
	-2\gamma\sum_{i=1}^2\int_{\mathbb{R}^2}
  \partial_i^3((b\cdot\nabla) u)\cdot (u\cdot\nabla\partial_i^3 b)\,dx,
\end{align*}
which still cannot be controlled directly. We then replace $(b\cdot\nabla)u$ using the
magnetic field equation of \eqref{type-wave-1}.
At this stage, a further integration by parts introduces the most difficult term
\begin{align*}
	2\gamma\alpha\sum_{i=1}^2\int_{\mathbb{R}^2}
	\partial_i^3((u\cdot\nabla) u)\cdot\partial_i^3\partial_2 b\,dx.
\end{align*}
We substitute $(u\cdot\nabla)u$ via the velocity equation once more,
and find that the resulting expression produces a term coming from $\partial_tu$ that cancels exactly with $A_2$
(see Proposition~\ref{A1b}). In this way, the two most dangerous terms $A_1$ and $A_2$
are resolved simultaneously through a chain of structural cancellations. Further details
can be found in the proofs of Lemmas~\ref{2.3} and~\ref{2.4}.

\vskip .1in
Establishing the decay estimates is a long and technically involved process. We decompose the
proof into the following five steps. The first step establishes the bound for negative Sobolev energy functional $\mathcal{E}_1(t)$. Next, to compute the decay rates for $\|\nabla^{k} u(t)\|_{H^{3-k}}$, $\|\nabla^{k} b(t)\|_{H^{4-k}}$ and $\|\partial_t \nabla^{k} b(t)\|_{H^{3-k}}$ with $k=0,1,2$, we divide the proof into three steps. In the last step, we establish higher decay rates for the horizontal derivatives $\|\nabla^{3} u_2(t)\|_{L^{2}}$, $\|\nabla^{3} b_2(t)\|_{H^{1}}$ and  $\|\nabla^{3}\partial_t b_2(t)\|_{L^{2}}$. It is noteworthy that we prove the decay estimates by establishing the boundedness of the energy functionals $E_0(t)$ to $E_3(t)$, which rely on the precise algebraic decay rates encoded in $E_0(t)$ through $E_3(t)$. The details will be presented in Section \ref{Dec}.

\vskip .1in
The rest of this paper is organized as follows. Section~\ref{Pre} establishes the
key nonlinear estimate, Proposition~\ref{A1b}, which controls the two most dangerous
terms $A_1$ and $A_2$ arising in the energy estimates. Through a chain of structural cancellations, this proposition provides a pointwise-in-time estimate \eqref{AAA1} used in the
stability proof and in the decay proof. Section~\ref{Sta} is
devoted to the proof of Theorem~\ref{Thm1}: by invoking \eqref{AAA1} at the critical
step of the $H^3$ energy estimate and applying a bootstrapping argument, we establish
the energy inequality \eqref{E0} and thereby prove \eqref{stability}.
Section~\ref{Dec} establishes Theorem~\ref{Thm2} by proving the upper bounds for $E_k(t)(k=0,1,2)$ and $E_3(t)$, in which the anisotropic Sobolev inequalities and the stability result \eqref{stability} are invoked. This section is divided into five subsections. In the first subsection, we establish the bound for negative Sobolev energy functional $\mathcal{E}_1(t)$. The following three subsections are devoted to proving the decay rates for $\|\nabla^{k} u(t)\|_{H^{3-k}}$, $\|\nabla^{k} b(t)\|_{H^{4-k}}$ and $\|\partial_t \nabla^{k} b(t)\|_{H^{3-k}}$ with $k=0,1,2$, respectively. In the last subsection, we prove the decay rates for $\|\nabla^{3} u_2(t)\|_{L^{2}}$, $\|\nabla^{3} b_2(t)\|_{H^{1}}$ and  $\|\nabla^{3}\partial_t b_2(t)\|_{L^{2}}$.

In this paper, $C$ denotes a positive constant which may depend on $\mu,\alpha,\gamma$
and $\eta$, and may change from line to line.

\vskip.2in
\section{A key nonlinear estimate}
\label{Pre}

The main result of this section is the following proposition, which provides the
crucial estimates for $A_1$ used in the proofs of Theorem \ref{Thm1} and Theorem \ref{Thm2}.

\begin{Prop}\label{A1b}
	Let $(u,b)$ be a smooth solution to \eqref{type-wave-1} on $[0,T)$ with
	$\mathcal{E}_0(t) < \infty$. Then the following pointwise-in-time estimate holds,	
\begin{align}\label{AAA1}
		A_1 \leq&
		-2\gamma\sum_{i=1}^2\frac{d}{dt}\int_{\mathbb{R}^2}
		\Bigl((b\cdot\nabla)\partial_i^3 b\cdot\partial_i^3 u
		+\alpha\partial_i^3 u\cdot\partial_i^3\partial_2 b
		+\gamma\partial_t\partial_i^3 b\cdot (u\cdot\nabla)\partial_i^3 b\Bigr)dx
		\notag\\
        &+C\left(\|(u,b,\partial_tb)\|_{H^{3}}+\|(u,b,\partial_tb)\|_{H^{3}}^2\right)\left(\|\partial_t\nabla^3b\|_{L^2}^2
        +\|\nabla^2\partial_1u\|_{L^2}^2+\|\nabla^2\partial_2u\|_{L^2}^2+\|\nabla^3 b\|_{H^1}^2\right)\notag\\
		&+\frac{2\mu}{3}\sum_{i=1}^2\|\partial_i^3 u_2\|_{L^2}^2
		+\Bigl(2\gamma\alpha^2+\frac{3\gamma^2\alpha^2\mu}{2}\Bigr)
		\sum_{i=1}^2\|\nabla\partial_i^3 b\|_{L^2}^2 -A_2
	\end{align}
for some constant $C>0$.
\end{Prop}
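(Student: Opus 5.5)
I will follow the chain of substitutions sketched in the introduction and organize the argument as a sequence of integrations by parts. Each step either produces a total time derivative, a term bounded by the cubic/quartic error in \eqref{AAA1}, or a "dangerous" term passed to the next step.

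\emph{Step 1.} In $A_1=2\gamma\sum_i\int (b\cdot\nabla)\partial_i^3u\cdot\partial_t\partial_i^3 b\,dx$, I integrate by parts in $x$, using $\nabla\cdot b=0$, to move $(b\cdot\nabla)$ onto $\partial_t\partial_i^3 b$. Then I move $\partial_t$ off $b$:
\[
-2\gamma\int (b\cdot\nabla)\partial_t\partial_i^3 b\cdot \partial_i^3 u
=-2\gamma\frac{d}{dt}\int (b\cdot\nabla)\partial_i^3 b\cdot\partial_i^3u
+2\gamma\int (\partial_tb\cdot\nabla)\partial_i^3b\cdot\partial_i^3u
+2\gamma\int (b\cdot\nabla)\partial_i^3 b\cdot\partial_t\partial_i^3 u .
\]
The middle term is cubic. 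I bound it by $\|\partial_tb\|_{L^\infty}\|\nabla^4 b\|_{L^2}\|\partial_i^3u\|_{L^2}$. The factor $\|\partial_i^3 u\|$ is not dissipative, so I split it as $\partial_i^3u$ with $\partial_1$ or $\partial_2$ in front, giving $\|\nabla^2\partial_1u\|+\|\nabla^2\partial_2u\|$. Young's inequality then places it in the error term.

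In the last term I substitute $\partial_tu=-\mu(0,u_2)^T-(u\cdot\nabla)u-\nabla p+(b\cdot\nabla)b+\alpha\partial_2 b$. The pieces behave as follows:
\begin{itemize}
\item The damping part gives $\mu$ times $\int (b\cdot\nabla)\partial_i^3b_2\,\partial_i^3u_2$. Young's inequality bounds it by $\frac{\mu}{3}\|\partial_i^3u_2\|^2$ plus a quartic error.
\item The pressure is handled with the divergence-free structure.
\item The $(b\cdot\nabla)b$ part is quartic, with $\|\nabla^4 b\|$ appearing twice. It is fine via $\|b\|_{H^3}^2$ times $\|\nabla^3b\|_{H^1}^2$.
\item The $\alpha\partial_2 b$ part, $2\gamma\alpha\int (b\cdot\nabla)\partial_i^3b\cdot\partial_i^3\partial_2b$, is symmetric in the highest derivatives. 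Integrating by parts turns it into a lower-order cubic term.
\end{itemize}
What remains is $-2\gamma\int(b\cdot\nabla)\partial_i^3b\cdot\partial_i^3((u\cdot\nabla)u)$. Distributing $\partial_i^3$, only the top piece $(u\cdot\nabla)\partial_i^3u$ is dangerous. Integrating by parts moves $(u\cdot\nabla)$ across, and after commutators this yields $-2\gamma\int\partial_i^3((b\cdot\nabla)u)\cdot(u\cdot\nabla)\partial_i^3b$, as stated in the introduction.

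\emph{Step 2.} From the $b$-equation I write
\[
(b\cdot\nabla)u=\gamma\partial_{tt}b+\partial_tb-\eta\Delta b+(u\cdot\nabla)b-\alpha\partial_2u .
\]
The contributions split as follows:
\begin{itemize}
\item The $\gamma\partial_{tt}\partial_i^3 b$ part gives the time derivative $-2\gamma\frac{d}{dt}\int\gamma\partial_t\partial_i^3 b\cdot(u\cdot\nabla)\partial_i^3b$. The leftovers are $\gamma\partial_t\partial_i^3b\cdot(\partial_tu\cdot\nabla)\partial_i^3b$, which is cubic after substituting $\partial_t u$, and $\gamma\partial_t\partial_i^3b\cdot(u\cdot\nabla)\partial_t\partial_i^3b$, which vanishes by $\nabla\cdot u=0$.
\item The $\partial_tb$ term is cubic.
\item The $(u\cdot\nabla)b$ term is antisymmetric to leading order.
\item The $\Delta b$ term is estimated directly.
\end{itemize}
The only troublesome remainder is $2\gamma\alpha\int\partial_i^3\partial_2u\cdot(u\cdot\nabla)\partial_i^3 b$. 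Integrating by parts in $x_2$ and then in $u\cdot\nabla$ converts it, up to commutators, into $2\gamma\alpha\int\partial_i^3((u\cdot\nabla)u)\cdot\partial_i^3\partial_2b$.

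\emph{Step 3 (cancellation).} I replace $(u\cdot\nabla)u=-\partial_tu-\mu(0,u_2)^T-\nabla p+(b\cdot\nabla)b+\alpha\partial_2b$ in this term. The pieces are:
\begin{itemize}
\item The pressure vanishes since $\nabla\cdot b=0$.
\item The $\alpha\partial_2 b$ piece gives $2\gamma\alpha^2\|\partial_2\partial_i^3b\|^2\le 2\gamma\alpha^2\|\nabla\partial_i^3b\|^2$.
\item The damping piece gives $-2\gamma\alpha\mu\int\partial_i^3u_2\,\partial_i^3\partial_2b_2$. Young's inequality bounds it by $\frac{\mu}{3}\|\partial_i^3u_2\|^2+3\gamma^2\alpha^2\mu\|\cdot\|^2$; I will tune the constants to reach exactly $\frac{3\gamma^2\alpha^2\mu}{2}$.
\item The $(b\cdot\nabla)b$ piece is cubic.
\item The $-\partial_t u$ piece gives $-2\gamma\alpha\frac{d}{dt}\int\partial_i^3u\cdot\partial_i^3\partial_2b+2\gamma\alpha\int\partial_i^3u\cdot\partial_t\partial_i^3\partial_2b$. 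Integrating the last integral by parts in $x_2$ gives exactly $-A_2$, which is the announced cancellation.
\end{itemize}

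Collecting everything yields \eqref{AAA1}. The main obstacle is bookkeeping the commutator terms in Steps 1--2 where four derivatives fall on $u$ or $\partial_tb$. For each such term I must verify that after integration by parts at most $\nabla^2\partial_ju$, $\nabla^4b$, or $\partial_t\nabla^3b$ appears quadratically, with a low-order factor in $L^\infty$ controlled by $\|(u,b,\partial_tb)\|_{H^3}$. Where necessary, a further substitution of $\partial_tu$ makes this work.
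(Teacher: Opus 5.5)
Your plan is the paper's proof. Lemmas 2.3 and 2.4 carry out exactly your three substitutions: $\partial_t u$ from the momentum equation, then $(b\cdot\nabla)u$ from the induction equation, then $(u\cdot\nabla)u$ from the momentum equation again. The $-\partial_t u$ piece then produces the same cancellation with $A_2$. Two bookkeeping points need fixing.

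\textbf{The damping term in Step 1.} The term $-2\gamma\mu\int(b\cdot\nabla)\partial_i^3 b\cdot\partial_i^3(0,u_2)^T\,dx$ should go straight into the cubic error as $C\|b\|_{H^3}\bigl(\|\nabla^3u_2\|_{L^2}^2+\|\nabla^4 b\|_{L^2}^2\bigr)$, as in the paper. This is allowed because $\|\nabla^3 u_2\|_{L^2}\le\|\nabla^2\partial_1u\|_{L^2}+\|\nabla^2\partial_2u\|_{L^2}$. Do not spend $\frac{\mu}{3}\|\partial_i^3u_2\|_{L^2}^2$ on it via Young. The stated constants $\frac{2\mu}{3}$ and $\frac{3\gamma^2\alpha^2\mu}{2}$ both come from a single Young step in Step 3 that uses the full $\frac{2\mu}{3}$. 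With your allocation you would get either $\bigl(\frac{2\mu}{3},\,3\gamma^2\alpha^2\mu\bigr)$ or $\bigl(\mu,\,\frac{3\gamma^2\alpha^2\mu}{2}\bigr)$. The second option would also consume all of the $u_2$-damping needed in the proof of Theorem~\ref{Thm1}.

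\textbf{The $\eta\Delta b$ term in Step 2.} This term contains $\nabla^5 b$, which is not controlled, so it cannot be "estimated directly". One integration by parts using $\nabla\cdot u=0$ gives $-2\gamma\eta\int(\nabla u\cdot\nabla)\partial_i^3 b\cdot\nabla\partial_i^3 b\,dx\le C\|u\|_{H^3}\|\nabla^4 b\|_{L^2}^2$.

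\textbf{Minor inaccuracies.} You call the $\alpha\partial_2 b$ piece in Step 1 symmetric and the $(u\cdot\nabla)b$ piece in Step 2 antisymmetric. Neither description is right, but both terms are harmless. The former is bounded by $C\|b\|_{L^\infty}\|\nabla^4 b\|_{L^2}^2$. The top-order part of the latter is $-2\gamma\|(u\cdot\nabla)\partial_i^3 b\|_{L^2}^2\le 0$.
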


The proof relies on two anisotropic Sobolev inequalities, which are stated first for convenience. The first inequality is proved in \cite[Lemma~2.1]{LaWZ1} and the second one in
\cite[Lemma~2.2]{LWZ}.

\begin{Lem}\label{2.1}
	Assume that $f,\partial_1f,g,\partial_2g$ are all in $L^2(\mathbb{R}^2)$. Then
	\begin{align}\label{AN}
		 \|fg\|_{L^2} \leq C\|f\|_{L^2}^{\frac{1}{2}}
		\|\partial_1f\|_{L^2}^{\frac{1}{2}}\|g\|_{L^2}^{\frac{1}{2}}
		\|\partial_2g\|_{L^2}^{\frac{1}{2}}.
	\end{align}
	In particular, the following $L^\infty$ bound holds:
	\begin{align*}
		\|f\|_{L^{\infty}} &\leq C\|f\|_{L^2}^{\frac{1}{4}}
		\|\partial_1f\|_{L^2}^{\frac{1}{4}}\|\partial_2f\|_{L^2}^{\frac{1}{4}}
		\|\partial_{12}f\|_{L^2}^{\frac{1}{4}},
	\end{align*}
	and consequently
	\begin{align*}
		\|f\|_{L^{\infty}} &\leq C\|f\|_{H^1}^{\frac{1}{2}}\|\partial_1f\|_{H^1}^{\frac{1}{2}},\\
		\|f\|_{L^{\infty}} &\leq C\|f\|_{H^1}^{\frac{1}{2}}\|\partial_2f\|_{H^1}^{\frac{1}{2}}.
\end{align*}
\end{Lem}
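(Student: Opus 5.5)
The plan is to prove the product inequality \eqref{AN} by a one-dimensional reduction in each variable, and then to deduce the $L^\infty$ bounds from it, or more precisely from the same one-dimensional ingredients.

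\textbf{The product inequality.} The basic tool is the one-dimensional Agmon-type bound: for $h\in H^1(\mathbb{R})$,
\[
\|h\|_{L^\infty(\mathbb{R})}^2\le 2\|h\|_{L^2(\mathbb{R})}\|h'\|_{L^2(\mathbb{R})},
\]
which follows from $h(s)^2=2\int_{-\infty}^s hh'$ and Cauchy--Schwarz.

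Applying this in $x_1$ for fixed $x_2$ gives
\[
\sup_{x_1}|f(x_1,x_2)|^2\le 2\|f(\cdot,x_2)\|_{L^2_{x_1}}\|\partial_1 f(\cdot,x_2)\|_{L^2_{x_1}}.
\]
Applying it in $x_2$ for fixed $x_1$ gives the analogous bound for $\sup_{x_2}|g(x_1,x_2)|^2$ with $\partial_2 g$.

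Now estimate
\[
\|fg\|_{L^2}^2=\int\!\!\int |f|^2|g|^2\,dx_1dx_2\le \int \sup_{x_1}|f|^2\,dx_2\cdot \int \sup_{x_2}|g|^2\,dx_1 .
\]
This step uses $|f(x_1,x_2)|^2\le \sup_{x_1}|f(\cdot,x_2)|^2$ and $|g(x_1,x_2)|^2\le\sup_{x_2}|g(x_1,\cdot)|^2$, after which the double integral factorizes.

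Insert the one-dimensional bounds and apply Cauchy--Schwarz in the remaining variable. This gives
\[
\int \|f(\cdot,x_2)\|_{L^2_{x_1}}\|\partial_1 f(\cdot,x_2)\|_{L^2_{x_1}}dx_2\le \|f\|_{L^2}\|\partial_1f\|_{L^2},
\]
and similarly for $g$. Taking square roots yields \eqref{AN}. Density of smooth compactly supported functions in the relevant anisotropic spaces justifies the computation.

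\textbf{The $L^\infty$ bound.} For the four-factor $L^\infty$ bound, apply the one-dimensional inequality iteratively. First, for each $x_1$,
\[
|f(x_1,x_2)|^2\le 2\|f(x_1,\cdot)\|_{L^2_{x_2}}\|\partial_2 f(x_1,\cdot)\|_{L^2_{x_2}}.
\]
Then bound $\sup_{x_1}\|F(x_1,\cdot)\|_{L^2_{x_2}}^2$ for $F=f$ and $F=\partial_2 f$. Writing
\[
\|F(x_1,\cdot)\|_{L^2_{x_2}}^2=2\int_{-\infty}^{x_1}\int F\,\partial_1F\,dx_2\,dx_1' ,
\]
we get $\sup_{x_1}\|F(x_1,\cdot)\|_{L^2_{x_2}}^2\le 2\|F\|_{L^2}\|\partial_1F\|_{L^2}$. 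Combining these gives
\[
\|f\|_{L^\infty}^4\le C\|f\|_{L^2}\|\partial_1 f\|_{L^2}\|\partial_2 f\|_{L^2}\|\partial_{12}f\|_{L^2}.
\]

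\textbf{The consequences.} The two corollaries follow by grouping factors. Using $\|f\|_{L^2},\|\partial_2 f\|_{L^2}\le\|f\|_{H^1}$ and $\|\partial_1f\|_{L^2},\|\partial_{12}f\|_{L^2}\le\|\partial_1 f\|_{H^1}$ gives the first. The symmetric grouping, with the roles of $\partial_1$ and $\partial_2$ interchanged, gives the second.

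\textbf{Expected obstacle.} None of the steps is deep. The only point requiring care is the factorization step in the proof of \eqref{AN}: the supremum must be taken in the variable that decouples the integral, namely in $x_1$ for $f$ and in $x_2$ for $g$, so that Fubini applies. This choice is exactly what makes the directional derivatives $\partial_1 f$ and $\partial_2 g$ appear.
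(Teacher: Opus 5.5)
Your proof is correct. The one-dimensional bound $\|h\|_{L^\infty}^2\le 2\|h\|_{L^2}\|h'\|_{L^2}$ in one variable, followed by Tonelli and Cauchy--Schwarz in the other, gives \eqref{AN} with $C=2$. Iterating it once more gives the four-factor $L^\infty$ bound, and grouping factors gives the two corollaries. The paper gives no proof of its own and simply cites \cite[Lemma~2.1]{LaWZ1} and \cite[Lemma~2.2]{LWZ}. Your slicing argument is the standard way such anisotropic inequalities are proved, so nothing is missing. The only caveat is that the $L^\infty$ part tacitly needs $\partial_2 f,\partial_{12}f\in L^2$ as well, which the lemma's stated hypotheses do not include.
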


 The proof of Proposition~\ref{A1b} will be divided into the following two lemmas. Lemma~\ref{2.3}
performs the first reduction, expressing $A_1$ in terms of a residual integral
that still contains fourth-order derivatives. Lemma~\ref{2.4} then handles this
residual by exploiting the magnetic field equation and uncovering the cancellation
with $A_2$.

\vskip .1in
\begin{Lem}\label{2.3}
Assume that $(u,b)$ is a smooth solution to \eqref{type-wave-1}, then we have
\begin{align}\label{A}
A_1\leq&-2\gamma\sum_{i=1}^2\frac{d}{dt}\int_{\R^2}(b\cdot\nabla)\partial_i^3 b\cdot\partial_i^3 udx-2\gamma\sum_{i,j,k=1}^2\int_{\R^2}b_j\partial_i^3\partial_ju\cdot u_k\partial_i^3\partial_kbdx\nonumber\\
&+C(\|(u,b,\partial_tb)\|_{H^3}+\|(u,b,\partial_tb)\|_{H^3}^2)\left(\|\nabla^2\partial_1u\|_{L^{2}}^2+\|\nabla^2\partial_2u\|_{L^{2}}^2+\|\nabla^3 b\|_{H^{1}}^2\right)
\end{align}
for some constant $C>0$.
\end{Lem}
\begin{proof}
Integrating by parts and using $\nabla\cdot b=0$, we deduce
 \begin{align}\label{A--1}
A_{1}&=-2\gamma\sum_{i=1}^2\int_{\R^2}(b\cdot\nabla)\partial_t\partial_i^3 b\cdot\partial_i^3 udx\nonumber\\
&=-2\gamma\sum_{i=1}^2\frac{d}{dt}\int_{\R^2}(b\cdot\nabla)\partial_i^3 b\cdot\partial_i^3 udx+2\gamma\sum_{i=1}^2\int_{\R^2}(\partial_t b\cdot\nabla)\partial_i^3 b\cdot\partial_i^3 udx\nonumber\\
&\quad+2\gamma\sum_{i=1}^2\int_{\R^2}( b\cdot\nabla)\partial_i^3 b\cdot\partial_i^3\partial_t udx\nonumber\\
&:=-2\gamma\sum_{i=1}^2\frac{d}{dt}\int_{\R^2}(b\cdot\nabla)\partial_i^3 b\cdot\partial_i^3 udx+2\gamma\sum_{i=1}^2\int_{\R^2}(\partial_t b\cdot\nabla)\partial_i^3 b\cdot\partial_i^3 udx+A_{3}.
\end{align}

In view of  H\"{o}lder's inequality, we obtain
\begin{align}\label{A1}
2\gamma\sum_{i=1}^2\int_{\R^2}(\partial_t b\cdot\nabla)\partial_i^3 b\cdot\partial_i^3 udx&\leq
C\|\partial_t b\|_{L^{\infty}}\|\nabla^3u\|_{L^2}\|\nabla^4b\|_{L^2}\nonumber\\
&\leq C\|\partial_{t}b\|_{H^{3}}(\|\nabla^3u\|_{L^2}^2+\|\nabla^3b\|_{H^1}^2).
\end{align}

 For the  term $A_{3}$, we use the velocity equation \eqref{type-wave-1}$_1$ to substitute for $\partial_t u$ and  rewrite it as
 \begin{align}\label{A3-8}
A_{3}&=2\gamma\sum_{i=1}^2\int_{\R^2}(b\cdot\nabla)\partial_i^3 b\cdot\partial_i^3\big(-\mu (0,u_2)^T-(u \cdot \nabla) u -\nabla p+(b \cdot \nabla) b+\alpha\partial_2 b\big)dx\nonumber\\
&=-2\gamma\mu\sum_{i=1}^2 \int_{\R^2}(b\cdot\nabla)\partial_i^3 b\cdot\partial_i^3(0,u_2)^Tdx -2\gamma\sum_{i=1}^2\int_{\R^2}(b\cdot\nabla)\partial_i^3 b\cdot\partial_i^3((u \cdot \nabla) u) dx\nonumber\\
&\quad-2\gamma\sum_{i=1}^2\int_{\R^2}(b\cdot\nabla)\partial_i^3 b\cdot\partial_i^3\nabla pdx+2\gamma\sum_{i=1}^2\int_{\R^2}(b\cdot\nabla)\partial_i^3 b\cdot\partial_i^3((b \cdot \nabla) b)dx\nonumber\\
&\quad+2\gamma\alpha\sum_{i=1}^2\int_{\R^2}(b\cdot\nabla)\partial_i^3 b\cdot\partial_i^3\partial_2 bdx\nonumber\\
&:=A_{31}+A_{32}+A_{33}+A_{34}+A_{35}.
\end{align}
Due to  H\"{o}lder's inequality and Sobolev's  inequality, one has
\begin{align}\label{A2}
A_{31}+A_{35}\leq&
C\| b\|_{L^{\infty}}\|\nabla^3u_2\|_{L^2}\|\nabla^3b\|_{H^1}+C\| b\|_{L^{\infty}}\|\nabla^3b\|_{H^1}^2\nonumber\\
\leq& C\|b\|_{H^3}(\|\nabla^3 b\|_{H^1}^2+\|\nabla^3u_2\|_{L^2}^2)
\end{align}
and
\begin{align}\label{A3}
A_{34}=&2\gamma\sum_{i=1}^2\int_{\R^2}(b\cdot\nabla)\partial_i^3 b\cdot(\partial_i^3b \cdot \nabla b+3\partial_i^2b \cdot \nabla\partial_i b+3\partial_i b \cdot\nabla \partial_i^2 b+(b\cdot\nabla)\partial_i^3b)dx\nonumber\\
\leq&
C\| b\|_{L^{\infty}}\| \nabla b\|_{L^{\infty}}\|\nabla^3 b\|_{L^2}\|\nabla^4b\|_{L^2}+C\| b\|_{L^{\infty}}^2\|\nabla^4 b\|_{L^2}^2\nonumber\\
&+C\| b\|_{L^{\infty}}\|\nabla^4b\|_{L^2}\| \nabla^2 b\|_{L^{2}}^{\frac{1}{2}}\| \nabla^2\partial_1 b\|_{L^{2}}^{\frac{1}{2}}\| \nabla^2 b\|_{L^{2}}^{\frac{1}{2}}\| \nabla^2\partial_2 b\|_{L^{2}}^{\frac{1}{2}}\nonumber\\
\leq&
C\| b\|_{H^{3}}^2\|\nabla^3 b\|_{H^1}^2.
\end{align}

To estimate $A_{33}$, we need to deal with the pressure term. Taking the divergence $\nabla\cdot$ to \eqref{type-wave-1}$_1$ yields
\begin{align}\label{p}
\Delta p=-\sum_{i,j=1}^2\partial_i\partial_j(u_i u_j-b_i b_j)-\mu\partial_2u_2
\end{align}
which then  implies
\begin{align}\label{SP}
&\|\nabla^3\nabla p\|_{L^2}
\leq C\big(\|\nabla^2(\nabla\cdot(u\cdot\nabla u))\|_{L^2}+\|\nabla^2(\nabla\cdot(b\cdot\nabla b))\|_{L^2}+\|\nabla^2\partial_2u_2\|_{L^2}\big).\end{align}
In view of the anisotropic inequality \eqref{AN}, we get
\begin{align}\label{SP-11}
\|\nabla^2\nabla\cdot(u\cdot\nabla u)\|_{L^2}&\leq C\|\nabla u\|_{L^{\infty}}\|\nabla^3u\|_{L^{2}}+C\|\nabla^2u\|_{L^{2}}^{\frac{1}{2}}\|\nabla^2\partial_{1}u\|_{L^{2}}^{\frac{1}{2}}
\|\nabla^2u\|_{L^{2}}^{\frac{1}{2}}\|\nabla^2\partial_{2}u\|_{L^{2}}^{\frac{1}{2}}\nonumber\\
&\leq C\|u\|_{H^{3}}(\|\nabla^2\partial_1u\|_{L^{2}}+\|\nabla^2\partial_2u\|_{L^{2}}).
\end{align}
Based on the analogous estimate, we can get
\begin{align}\label{SP1}
\|\nabla^2\nabla\cdot(b\cdot\nabla b)\|_{L^2}&\leq C\|\nabla b\|_{L^{\infty}}\|\nabla^3b\|_{L^{2}}
+C\|\nabla^2b\|_{L^{2}}^{\frac{1}{2}}\|\nabla^2\partial_{1}b\|_{L^{2}}^{\frac{1}{2}}
\|\nabla^2b\|_{L^{2}}^{\frac{1}{2}}\|\nabla^2\partial_{2}b\|_{L^{2}}^{\frac{1}{2}}\nonumber\\
&\leq C\| b\|_{H^{3}}\|\nabla^3b\|_{L^{2}}.
\end{align}
Consequently, it follows from \eqref{SP}, \eqref{SP-11} and \eqref{SP1} that
\begin{align}\label{A4}
A_{33}&\leq
C\| b\|_{L^{\infty}}\|\nabla^3 b\|_{H^1}\|\nabla^3\nabla p\|_{L^2}\nonumber\\
&\leq
C(\|(u,b)\|_{H^{3}}+\|(u,b)\|_{H^{3}}^2)\left(\|\nabla^2\partial_1u\|_{L^{2}}^2+\|\nabla^2\partial_2u\|_{L^{2}}^2+\|\nabla^3 b\|_{H^{1}}^2\right).
\end{align}

To estimate $A_{32}$, integrating by parts and using $\nabla\cdot u=\nabla\cdot b=0$, we have
\begin{align*}
A_{32}&=A_{32,1}-2\gamma\sum_{i=1}^2\int_{\R^2}(b\cdot\nabla)\partial_i^3 b\cdot (u \cdot \nabla)\partial_i^3 u dx\\
&=A_{32,1}+A_{32,2}-2\gamma\sum_{i,j,k=1}^2\int_{\R^2}b_j\partial_i^3\partial_ju\cdot u_k\partial_i^3\partial_kbdx,
\end{align*}
where
\begin{align*}
A_{32,1}&=-2\gamma\sum_{i=1}^2\int_{\R^2}(b\cdot\nabla)\partial_i^3 b\cdot(\partial_i^3u \cdot \nabla) u dx-6\gamma\sum_{i=1}^2\int_{\R^2}(b\cdot\nabla)\partial_i^3 b\cdot(\partial_i^2u \cdot \nabla)\partial_i u dx\\
&\quad-6\gamma\sum_{i=1}^2\int_{\R^2}(b\cdot\nabla)\partial_i^3 b\cdot(\partial_i u \cdot \nabla)\partial_i^2 u dx
\end{align*}
and
\begin{align*}
A_{32,2}=2\gamma\sum_{i,j,k=1}^2\int_{\R^2}u_k\partial_kb_j\partial_j\partial_i^3b\cdot \partial_i^3udx-2\gamma\sum_{i,j,k=1}^2\int_{\R^2}b_j \partial_ju_k\partial_i^3\partial_kb\cdot\partial_i^3udx.
\end{align*}
Applying Lemma \ref{2.1} together with H\"{o}lder's inequality and Sobolev's inequality yields
\begin{align}\label{A5}
A_{32,1}
&\leq
C\| b\|_{L^{\infty}}\|\nabla u\|_{L^{\infty}}\|\nabla^3u\|_{L^2}\|\nabla^3b\|_{H^1}\nonumber\\&\quad+C\| b\|_{L^{\infty}}\|\nabla ^2u\|_{L^{2}}^{\frac{1}{2}}\|\nabla ^2\partial_1u\|_{L^{2}}^{\frac{1}{2}}\|\nabla ^2u\|_{L^{2}}^{\frac{1}{2}}\|\nabla ^2\partial_2u\|_{L^{2}}^{\frac{1}{2}}\|\nabla^3b\|_{H^1}\nonumber\\
&\leq
C\| (u,b)\|_{H^{3}}^2(\|\nabla^3u\|_{L^2}^2+\|\nabla^3b\|_{H^1}^2).
\end{align}
Similarly, we have
 \begin{align}\label{A6}
A_{32,2}
&\leq C\| (u,b)\|_{H^{3}}^2(\|\nabla^3u\|_{L^2}^2+\|\nabla^3b\|_{H^1}^2).
\end{align}

Finally, by collecting \eqref{A--1}-\eqref{A3}, \eqref{A4}, \eqref{A5} and \eqref{A6}, we conclude that \eqref{A} holds.
\end{proof}

\vskip.1in
The proof of Lemma~\ref{2.3} reduces the problem to control the term
\[
-2\gamma\sum_{i,j,k=1}^2\int_{\mathbb{R}^2}
b_j\partial_i^3\partial_j u\cdot u_k\partial_i^3\partial_k b\,dx,
\]
which still involves fourth-order derivatives of $u$. The following lemma
shows how to handle this term by substituting $(b\cdot\nabla)u$ from the
magnetic field equation.

\begin{Lem}\label{2.4}
Assume that $(u,b)$ is a smooth solution to \eqref{type-wave-1}, then
\begin{align}\label{AA2}
&-2\gamma\sum_{i,j,k=1}^2\int_{\R^2}b_j\partial_i^3\partial_ju\cdot u_k\partial_i^3\partial_kbdx\notag\\
\leq&
-2\gamma\sum_{i=1}^2\frac{d}{dt}\int_{\R^2}(\alpha\partial_i^3u\cdot\partial_i^3\partial_2b+\gamma\partial_{t}\partial_i^3b\cdot (u\cdot\nabla)\partial_i^3 b)dx\notag\\
&+C\left(\|(u,b,\partial_tb)\|_{H^{3}}+\|(u,b,\partial_tb)\|_{H^{3}}^2\right)\left(\|\partial_t\nabla^3b\|_{L^2}^2+\|\nabla^2\partial_1u\|_{L^2}^2+\|\nabla^2\partial_2u\|_{L^2}^2+\|\nabla^3 b\|_{H^1}^2\right)\notag\\
&-{A_{2}}+\frac{2\mu}{3}\sum_{i=1}^2\| \partial_i^3u_2\|_{L^2}^2+(2\gamma\alpha^2+\frac{3\gamma^2\alpha^2\mu}{2})\sum_{i=1}^2\|\nabla \partial_i^3b\|_{L^2}^2
\end{align}
for some constant $C>0$.

\end{Lem}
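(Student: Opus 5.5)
The plan is to start from the residual integral of Lemma~\ref{2.3} and write it, up to commutators, as
\[
-2\gamma\sum_{i=1}^2\int_{\R^2}\partial_i^3\bigl((b\cdot\nabla)u\bigr)\cdot(u\cdot\nabla)\partial_i^3 b\,dx .
\]
The commutator terms are those of the form $\partial_i^m b_j\,\partial_j\partial_i^{3-m}u\cdot u_k\partial_k\partial_i^3 b$ with $m\ge1$. They are bounded by $C\|u\|_{L^\infty}\|\nabla^4 b\|_{L^2}$ times $\|b\|_{L^\infty}$-type factors and $\|\nabla^3u\|_{L^2}$, using Lemma~\ref{2.1} for the middle products. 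Since $\|\nabla^3 u\|_{L^2}\le \|\nabla^2\partial_1u\|_{L^2}+\|\nabla^2\partial_2u\|_{L^2}$, they fall into the $C(\|\cdot\|_{H^3}+\|\cdot\|_{H^3}^2)(\cdots)$ bracket of \eqref{AA2}. In the main term I substitute $(b\cdot\nabla)u$ from the magnetic equation \eqref{type-wave-1}$_2$:
\[
(b\cdot\nabla)u=\gamma\partial_{tt}b+\partial_t b-\eta\Delta b+(u\cdot\nabla)b-\alpha\partial_2u .
\]
This splits the integral into five pieces, which I treat one at a time.

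\emph{The $\gamma\partial_{tt}b$ piece.} I write it as $-2\gamma^2\int\partial_t(\partial_t\partial_i^3 b)\cdot(u\cdot\nabla)\partial_i^3 b$ and pull out the time derivative. This produces exactly the term $-2\gamma^2\frac{d}{dt}\int\partial_t\partial_i^3b\cdot(u\cdot\nabla)\partial_i^3b$ in \eqref{AA2}. What remains is $2\gamma^2\int\partial_t\partial_i^3b\cdot(\partial_tu\cdot\nabla)\partial_i^3b$ plus $2\gamma^2\int\partial_t\partial_i^3b\cdot(u\cdot\nabla)\partial_t\partial_i^3b$, and the latter vanishes since $\nabla\cdot u=0$. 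For the former I need $\|\partial_t u\|_{L^\infty}\le C(\|(u,b)\|_{H^3}+\|(u,b)\|_{H^3}^2)$, which follows from the velocity equation together with the pressure bound \eqref{SP}. Then Young's inequality gives control by $\|\partial_t\nabla^3b\|_{L^2}^2+\|\nabla^4 b\|_{L^2}^2$.

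\emph{The $\partial_tb$, $-\eta\Delta b$ and $(u\cdot\nabla)b$ pieces.} The $\partial_tb$ piece is bounded directly by $\|u\|_{L^\infty}\|\partial_t\nabla^3b\|_{L^2}\|\nabla^4b\|_{L^2}$. For the $-\eta\Delta b$ piece I integrate by parts once; the fully symmetric part vanishes by $\nabla\cdot u=0$, leaving $\|\nabla u\|_{L^\infty}\|\nabla^4b\|_{L^2}^2$. The $(u\cdot\nabla)b$ piece is quadratic in $u$ and bounded by $\|u\|_{H^3}^2\|\nabla^3b\|_{H^1}^2$ plus anisotropic commutators.

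\emph{The $-\alpha\partial_2u$ piece.} This is the heart of the argument: $2\gamma\alpha\int\partial_i^3\partial_2u\cdot(u\cdot\nabla)\partial_i^3b$. I move $\partial_2$ onto the other factor, which gives $-2\gamma\alpha\int\partial_i^3u\cdot(\partial_2u\cdot\nabla)\partial_i^3b$. That term is bounded by $\|\partial_2u\|_{L^\infty}\|\nabla^3u\|_{L^2}\|\nabla^4b\|_{L^2}$, and again $\|\nabla^3 u\|_{L^2}$ is controlled by $\|\nabla^2\partial_1u\|_{L^2}+\|\nabla^2\partial_2u\|_{L^2}$. The other term is $-2\gamma\alpha\int\partial_i^3u\cdot(u\cdot\nabla)\partial_i^3\partial_2b=2\gamma\alpha\int\partial_i^3((u\cdot\nabla)u)\cdot\partial_i^3\partial_2b$ modulo commutators. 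Now I substitute $(u\cdot\nabla)u=-\partial_tu-\mu(0,u_2)^T-\nabla p+(b\cdot\nabla)b+\alpha\partial_2b$ and handle each part:
\begin{itemize}
\item The pressure part vanishes because $\nabla\cdot\partial_2b=0$.
\item The $(b\cdot\nabla)b$ part is cubic and bounded as in \eqref{A3}.
\item The $\alpha\partial_2b$ part gives at most $2\gamma\alpha^2\|\nabla\partial_i^3b\|_{L^2}^2$.
\item The damping part satisfies $2\gamma\alpha\mu|\int\partial_i^3u_2\,\partial_i^3\partial_2b_2|\le\frac{2\mu}{3}\|\partial_i^3u_2\|_{L^2}^2+\frac{3\gamma^2\alpha^2\mu}{2}\|\nabla\partial_i^3b\|_{L^2}^2$ by Young's inequality, which explains the constants in \eqref{AA2}.
\item For the $-\partial_tu$ part, I pull out the time derivative to get $-2\gamma\alpha\frac{d}{dt}\int\partial_i^3u\cdot\partial_i^3\partial_2b$. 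The remainder is $2\gamma\alpha\int\partial_i^3u\cdot\partial_t\partial_i^3\partial_2b=-2\gamma\alpha\int\partial_i^3\partial_2u\cdot\partial_t\partial_i^3 b$, and summing over $i$ gives exactly $-A_2$.
\end{itemize}

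I expect the main obstacle to be the bookkeeping of commutators so that every error term carries a dissipative quadratic factor. This matters most where $\nabla^3u$ appears undifferentiated: I must always bound it through $\|\nabla^2\partial_1u\|_{L^2}+\|\nabla^2\partial_2u\|_{L^2}$, and use Lemma~\ref{2.1} for products of the form $\nabla^2u\,\nabla^2u$. The other delicate point is verifying the $L^\infty$ bound on $\partial_tu$, including its pressure component, purely in terms of $H^3$ norms.
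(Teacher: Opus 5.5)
Your proposal is correct and follows the paper's proof essentially step by step. It uses the same split into the commutator part (the paper's $B_1$) and the main term $-2\gamma\sum_i\int\partial_i^3((b\cdot\nabla)u)\cdot(u\cdot\nabla)\partial_i^3 b\,dx$, the same substitution of $(b\cdot\nabla)u$ from the magnetic equation into five pieces, and, in the $\alpha\partial_2 u$ piece, the same re-substitution of $(u\cdot\nabla)u$ from the velocity equation producing the time derivative, $-A_2$, and identical Young constants. One small imprecision: \eqref{SP} controls only $\nabla^4 p$, so the bound on $\|\partial_t u\|_{L^\infty}$ should instead come from the low-order pressure estimates as in \eqref{P}--\eqref{PPPP}, or directly from $\|\partial_t u\|_{H^2}$ via $\mathbb{P}$ applied to the velocity equation; this is routine.
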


\begin{proof}
\par Firstly, we have
 \begin{align}\label{BBBB}
&-2\gamma\sum_{i,j,k=1}^2\int_{\mathbb{R}^2}
b_j\partial_i^3\partial_j u\cdot u_k\partial_i^3\partial_k b\,dx=B_{1}+B_{2},
\end{align}
where
 \begin{align*}
&B_1=2\gamma\sum_{i=1}^2\int_{\R^2}(\partial_i^3 b\cdot\nabla) u\cdot (u\cdot \nabla)\partial_i^3bdx+6\gamma\sum_{i=1}^2\int_{\R^2}(\partial_i^2 b\cdot\nabla)\partial_i u\cdot (u\cdot \nabla)\partial_i^3b dx\nonumber\\&\qquad+6\gamma\sum_{i=1}^2\int_{\R^2}(\partial_i b\cdot\nabla)\partial_i^2 u\cdot (u\cdot \nabla)\partial_i^3b dx
\end{align*}
and
 \begin{align*}
B_2=-2\gamma\sum_{i=1}^2\int_{\R^2}\partial_i^3\big((b\cdot\nabla) u\big)\cdot (u\cdot\nabla)\partial_i^3 bdx.
\end{align*}

By H\"{o}lder's inequality, Lemma \ref{2.1} and Sobolev's inequality, there holds
\begin{align}\label{A7}
B_{1}
&\leq
C\|u\|_{L^{\infty}}\|\nabla u\|_{L^{\infty}}\|\nabla^3b\|_{H^1}^2+C\|u\|_{L^{\infty}}\|\nabla ^2u\|_{L^{2}}^{\frac{1}{2}}\|\nabla ^2\partial_1u\|_{L^{2}}^{\frac{1}{2}}\|\nabla ^2b\|_{L^{2}}^{\frac{1}{2}}\|\nabla ^2\partial_2b\|_{L^{2}}^{\frac{1}{2}}\|\nabla^3b\|_{H^1}\nonumber\\
&\quad+C\|u\|_{L^{\infty}}\|\nabla b\|_{L^{\infty}}\|\nabla^3u\|_{L^2}\|\nabla^3b\|_{H^1}\nonumber\\
&\leq
C\|(u,b)\|_{H^{3}}^2(\|\nabla^3b\|_{H^1}^2+\|\nabla^3u\|_{L^2}^2).
\end{align}

For the  term  $B_{2}$, the trick is to replace $(b\cdot\nabla)u$ by the other terms in the equations of the magnetic field \eqref{type-wave-1} and then
\begin{align}\label{A21}
B_{2}&=-2\gamma\sum_{i=1}^2\int_{\R^2}\partial_i^3(\gamma\partial_{tt}b+\partial_{t}b+(u\cdot\nabla) b-\eta\Delta b-\alpha\partial_2u)\cdot u\cdot\nabla\partial_i^3 bdx\nonumber\\
&=-2\gamma^2\sum_{i=1}^2\int_{\R^2}\partial_{tt}\partial_i^3b\cdot (u\cdot\nabla)\partial_i^3 bdx-2\gamma\sum_{i=1}^2\int_{\R^2}\partial_{t}\partial_i^3b\cdot (u\cdot\nabla)\partial_i^3 bdx\nonumber\\
&\quad-2\gamma\sum_{i=1}^2\int_{\R^2}\partial_i^3((u\cdot\nabla) b)\cdot (u\cdot\nabla)\partial_i^3 bdx+2\gamma\eta\sum_{i=1}^2\int_{\R^2}\partial_i^3(\Delta b)\cdot (u\cdot\nabla)\partial_i^3 bdx\nonumber\\
&\quad+2\gamma\alpha\sum_{i=1}^2\int_{\R^2}\partial_i^3\partial_2u\cdot (u\cdot\nabla)\partial_i^3 bdx\nonumber\\
&:=B_{3}+B_{4}+B_{5}+B_{6}+B_{7}.
\end{align}

Next, we need to bound $B_{3},B_{4},\cdots$ and $B_{7}$ one by one. First, by integration by parts and using $\nabla\cdot u=0$, we have
\begin{align}\label{BBB}
B_{3}
&=-2\gamma^2\sum_{i=1}^2\frac{d}{dt}\int_{\R^2}\partial_{t}\partial_i^3b\cdot (u\cdot\nabla)\partial_i^3 bdx
+2\gamma^2\sum_{i=1}^2\int_{\R^2}\partial_{t}\partial_i^3b\cdot (\partial_{t}u\cdot\nabla)\partial_i^3 bdx.
\end{align}

To deal with the second term on the right-hand side of \eqref{BBB}, the strategy here is to replace $\partial_t u$ by using the velocity field equation  in \eqref{type-wave-1}
and then
\begin{align}\label{K3112}
&2\gamma^2\sum_{i=1}^2\int_{\R^2}\partial_{t}\partial_i^3b\cdot (\partial_{t}u\cdot\nabla)\partial_i^3 bdx\nonumber\\
=&2\gamma^2\sum_{i=1}^2\int_{\R^2}\partial_{t}\partial_i^3b\cdot \left(-(u\cdot\nabla) u-\nabla p-\mu (0,u_2)^{T}+(b\cdot\nabla) b+\alpha\partial_2b
\right)\cdot\nabla\partial_i^3 bdx\nonumber\\
\leq&C\|-(u\cdot\nabla)u-\nabla p-u_2+(b\cdot\nabla )b+\partial_2b\|_{L^{\infty}}\left(\|\nabla^3 b\|_{H^1}^2+\|\partial_t\nabla^3 b\|_{L^2}^2\right).
\end{align}
To proceed we need to deal with $\|\nabla p\|_{L^{\infty}}$. By Lemma \ref{2.1}, we have
\begin{align}\label{PP11}
&\|\nabla p\|_{L^{\infty}}\leq C\|\nabla p\|_{L^2}^{\frac{1}{4}}\|\nabla\partial_1 p\|_{L^2}^{\frac{1}{4}}\|\nabla \partial_2 p\|_{L^2}^{\frac{1}{4}}\|\nabla \partial_{12}p\|_{L^2}^{\frac{1}{4}}.
\end{align}
Noting that
\begin{align*}
p&=\sum_{i,j=1}^2(-\Delta)^{-1}\partial_i\partial_j(u_i u_j-b_i b_j)+\mu(-\Delta)^{-1}\partial_2u_2\nonumber\\
&= \sum_{i,j=1}^2\mathcal{R}_i\mathcal{R}_j(u_i u_j-b_i b_j)+\mu(-\Delta)^{-1}\partial_2u_2,
\end{align*}
where $\mathcal{R}_i=\partial_i(-\Delta)^{-\frac 12}$ denotes the $i$-th Riesz transform.
Based on the boundedness of the operator $\mathcal{R}_i$ on $L^p(\mathbb{R}^2)$ for $1<p<+\infty$ and Sobolev's inequality, we have
\begin{align}\label{P}
\|\nabla p\|_{L^2}&=\sum_{i,j=1}^2\|\nabla\left(\mathcal{R}_i\mathcal{R}_j(u_i u_j-b_i b_j)+\mu{(-\Delta)}^{-1}\partial_2u_2\right)\|_{L^2}\nonumber\\
&\leq2\|u\|_{L^\infty}\|\nabla u\|_{L^{2}}+2\|b\|_{L^\infty}\|\nabla b\|_{L^{2}}+C\|u_2\|_{L^2}\nonumber\\
&\leq C\|u\|_{H^2}\|\nabla u\|_{L^{2}}+C\|b\|_{H^2}\|\nabla b\|_{L^{2}}+C\|u_2\|_{L^2}.
\end{align}
 Similar arguments to \eqref{SP-11} and \eqref{P} lead to
\begin{align}\label{PP12}
\|\nabla\partial_1 p\|_{L^2}
&\leq C\|u\|_{H^3}\|\nabla \partial_1u\|_{L^{2}}+C\|b\|_{H^3}\|\nabla^2 b\|_{L^{2}}+C\|\partial_1u_2\|_{L^2},
\end{align}
\begin{align}\label{PP13}
\|\nabla\partial_2 p\|_{L^2}
&\leq C\|u\|_{H^3}\|\nabla\partial_2u\|_{L^{2}}+C\|b\|_{H^3}\|\nabla^2 b\|_{L^{2}}+C\|\partial_2u_2\|_{L^2},
\end{align}
and
\begin{align}\label{PP14}
\|\nabla\partial_{12} p\|_{L^2}
\leq&C\|u\|_{H^3}\left(\|\partial_1u\|_{H^{2}}+\|\partial_2u\|_{H^{2}}\right)+C\|b\|_{H^3}\|\nabla b\|_{H^{2}}+C\|\partial_{12}u_2\|_{L^2}.
\end{align}
Putting \eqref{P}-\eqref{PP14} into \eqref{PP11}, we obtain
\begin{align}\label{PPPP}
&\|\nabla p\|_{L^{\infty}}\leq C\left(\|(u,b)\|_{H^{3}}+\|(u,b)\|_{H^{3}}^2\right).
\end{align}

Combining \eqref{K3112} and \eqref{PPPP}, then we have
\begin{align}\label{A8-8}
&2\gamma^2\sum_{i=1}^2\int_{\R^2}\partial_{t}\partial_i^3b\cdot (\partial_{t}u\cdot\nabla)\partial_i^3 bdx\nonumber\\
&\leq C\left(\|(u,b)\|_{H^{3}}+\|(u,b)\|_{H^{3}}^2\right)
\left(\|\nabla^3 b\|_{H^1}^2+\|\partial_t\nabla^3 b\|_{L^2}^2\right).
\end{align}
Thereby, there holds
\begin{align}\label{A8}
B_{3}
&\leq-2\gamma^2\sum_{i=1}^2\frac{d}{dt}\int_{\R^2}\partial_{t}\partial_i^3b\cdot (u\cdot\nabla)\partial_i^3 bdx\nonumber\\
&\quad+C\left(\|(u,b)\|_{H^{3}}+\|(u,b)\|_{H^{3}}^2\right)
\left(\|\nabla^3 b\|_{H^1}^2+\|\partial_t\nabla^3 b\|_{L^2}^2\right).
\end{align}

Making use of H\"{o}lder's inequality and Sobolev's inequality, we obtain
\begin{align}\label{A9}
B_{4}
\leq&
C\|u\|_{L^{\infty}}\|\partial_t\nabla^3b\|_{L^2}\|\nabla^4b\|_{L^2}
\leq C\|u\|_{H^3}\left(\|\nabla^3 b\|_{H^1}^2+\|\partial_t\nabla^3 b\|_{L^2}^2\right),
\end{align}
 and
 \begin{align}\label{A11}
B_{5}&=-2\gamma\sum_{i=1}^2\int_{\R^2}(\partial_i^3u\cdot\nabla) b\cdot (u\cdot\nabla)\partial_i^3 bdx-6\gamma\sum_{i=1}^2\int_{\R^2}(\partial_i^2u\cdot\nabla)\partial_i b\cdot (u\cdot\nabla)\partial_i^3 bdx\nonumber\\
&\quad-6\gamma\sum_{i=1}^2\int_{\R^2}(\partial_i u\cdot\nabla)\partial_i^2 b\cdot (u\cdot\nabla)\partial_i^3 bdx-2\gamma\sum_{i=1}^2\int_{\R^2}(u\cdot\nabla)\partial_i^3 b\cdot (u\cdot\nabla)\partial_i^3 bdx\nonumber\\
&\leq C\|u\|_{L^\infty}\|\nabla^3u\|_{L^2}\|\nabla b\|_{L^\infty}\|\nabla^3 b\|_{H^1}+C\|u\|_{L^\infty}\|\nabla^3b\|_{L^2}\|\nabla u\|_{L^\infty}\|\nabla^3 b\|_{H^1}\nonumber\\
&\quad+C\|u\|_{L^\infty}\|\nabla^2u\|_{L^2}^{\frac{1}{2}}\|\nabla^2\partial_1u\|_{L^2}^{\frac{1}{2}}\|\nabla^2 b\|^{\frac{1}{2}}\|\nabla^2\partial_2b\|_{L^2}^{\frac{1}{2}}\|\nabla^3 b\|_{H^1}+C\|u\|_{L^\infty}^2\|\nabla^3 b\|_{H^1}^2\nonumber\\
&\leq C\|(u,b)\|_{H^3}^2(\|\nabla^3 b\|_{H^1}^2+\|\nabla^2\partial_1 u\|_{L^2}^2+\|\nabla^2\partial_2 u\|_{L^2}^2),
\end{align}
and
\begin{align}\label{A10}
B_{6}
&=-2\gamma\eta\sum_{i=1}^2\int_{\R^2}(\nabla u\cdot\nabla)\partial_i^3b\cdot\partial_i^3\nabla bdx\nonumber\\
&\leq
C\|\nabla u\|_{L^{\infty}}\|\nabla^4 b\|_{L^2}^2
\leq C\|u\|_{H^3}\|\nabla^4 b\|_{L^2}^2.
\end{align}

Next, we estimate the last term $B_7$ on the right-hand side of \eqref{A21}. By integration by parts, we obtain
\begin{align}\label{A112}
B_{7}
&=-2\gamma\alpha\sum_{i=1}^2\int_{\R^2}(u\cdot\nabla)\partial_i^3\partial_2u\cdot \partial_i^3 bdx\nonumber\\
&=2\gamma\alpha\sum_{i=1}^2\int_{\R^2}(\partial_2u\cdot\nabla)\partial_i^3u\cdot\partial_i^3 bdx+2\gamma\alpha\sum_{i=1}^2\int_{\R^2}(u\cdot\nabla)\partial_i^3u\cdot \partial_i^3\partial_2 bdx\nonumber\\
&:=B_{71}+B_{72}.
\end{align}

For $B_{71}$, integration by parts  gives
\begin{align}\label{A12}
B_{71}&=-2\gamma\alpha\sum_{i=1}^2\int_{\R^2}(\partial_2u\cdot\nabla)\partial_i^3 b\cdot \partial_i^3udx
\leq C\|u\|_{H^{3}}
(\|\nabla^4 b\|_{L^2}^2+\|\nabla^3u\|_{L^2}^2).
\end{align}

The second term $B_{72}$ needs more subtle work. Firstly, we divide it into two parts:
\begin{align}
B_{72}
&=B_{72,1}+B_{72,2}, \label{B72-de}
\end{align}
where
\begin{align*}
B_{72,1}&=-2\gamma\alpha\sum_{i=1}^2\int_{\R^2}(\partial_i^3u\cdot\nabla) u\cdot\partial_i^3\partial_2 bdx-6\gamma\alpha\sum_{i=1}^2\int_{\R^2}(\partial_i^2u\cdot\nabla)\partial_i u\cdot\partial_i^3\partial_2 bdx\\
&\quad-6\gamma\alpha\sum_{i=1}^2\int_{\R^2}(\partial_i u\cdot\nabla)\partial_i^2u\cdot\partial_i^3\partial_2 bdx
\end{align*}
and
\begin{align*}
B_{72,2}&=2\gamma\alpha\sum_{i=1}^2\int_{\R^2}\partial_i^3((u\cdot\nabla) u)\cdot\partial_i^3\partial_2 bdx.
\end{align*}

With the help of  Lemma \ref{2.1}  we get
\begin{align}\label{A13}
B_{72,1}
&\leq
C\|\nabla u\|_{L^{\infty}}\|\nabla^3u\|_{L^2}\|\nabla^3\partial_2 b\|_{L^2}+C\|\nabla^2 u\|_{L^{2}}^{\frac{1}{2}}\|\nabla^2\partial_1u\|_{L^{2}}^{\frac{1}{2}}\|\nabla^2 u\|_{L^{2}}^{\frac{1}{2}}\| \nabla^2\partial_2u\|_{L^{2}}^{\frac{1}{2}}\|\nabla^3\partial_2b\|_{L^2}\nonumber\\
&\leq C\| u\|_{H^{3}}(\|\nabla^2\partial_1u\|_{L^2}^2+\|\nabla^2\partial_2u\|_{L^2}^2+\|\nabla^4 b\|_{L^2}^2).
\end{align}

For $B_{72,2}$, the strategy here is to replace $(u\cdot\nabla) u$ by using the velocity field equation \eqref{type-wave-1}, then
\begin{align}\label{2152}
B_{72,2}
&=2\gamma\alpha\sum_{i=1}^2\int_{\R^2}\partial_i^3(-\partial_tu+(b\cdot\nabla) b-\nabla p+\alpha\partial_2b -\mu (0,u_2)^{T})\cdot\partial_i^3\partial_2 bdx\notag\\
&=-2\gamma\alpha\sum_{i=1}^2\int_{\R^2}\partial_t\partial_i^3u\cdot\partial_i^3\partial_2 bdx+2\gamma\alpha\sum_{i=1}^2\int_{\R^2}\partial_i^3((b\cdot\nabla) b )\cdot\partial_i^3\partial_2 bdx\notag\\
&\quad+2\gamma\alpha^2\sum_{i=1}^2\|\partial_i^3\partial_2 b\|_{L^2}^2-2\gamma\mu\alpha\sum_{i=1}^2\int_{\R^2}\partial_i^3 (0,u_2)^{T}\cdot\partial_i^3\partial_2 bdx,
\end{align}
where we have used the following fact $$2\gamma\alpha\sum_{i=1}^2\int_{\R^2}\partial_i^3\nabla p\cdot\partial_i^3\partial_2 bdx=0.$$
\par
Now we further bound the terms in (\ref{2152}).  By integration by parts, the first term can be rewritten as
\begin{align*}
&-2\gamma\alpha\sum_{i=1}^2\int_{\R^2}\partial_t\partial_i^3u\cdot\partial_i^3\partial_2 bdx\nonumber\\
&=-2\gamma\alpha\sum_{i=1}^2\frac{d}{dt}\int_{\R^2}\partial_i^3u\cdot\partial_i^3 \partial_2bdx-2\gamma\alpha\sum_{i=1}^2\int_{\R^2}\partial_i^3\partial_2u\cdot\partial_t \partial_i^3bdx\nonumber\\
&=-2\gamma\alpha\sum_{i=1}^2\frac{d}{dt}\int_{\R^2}\partial_i^3u\cdot\partial_i^3 \partial_2bdx-A_{2}.
\end{align*}

For the second term in (\ref{2152}), combining H\"{o}lder's inequality, Lemma \ref{2.1} and Sobolev's inequality, one obtains
 \begin{align*}
&2\gamma\alpha\sum_{i=1}^2\int_{\R^2}\partial_i^3((b\cdot\nabla) b )\cdot\partial_i^3\partial_2 bdx\nonumber\\
&=2\gamma\alpha\sum_{i=1}^2\int_{\R^2}\left(\partial_i^3b\cdot\nabla b+3\partial_i^2b\cdot\nabla\partial_i b +3\partial_i b\cdot\nabla^2\partial_i b+(b\cdot\nabla)\partial_i^3 b \right)\cdot\nabla^3\partial_2 bdx\nonumber\\
&\leq C\|\nabla b\|_{L^{\infty}}\|\nabla^3 b\|_{L^2}\|\nabla^3\partial_2 b\|_{L^2}+C\|\nabla^2 b\|_{L^{2}}^{\frac{1}{2}}\|\nabla^2\partial_1b\|_{L^{2}}^{\frac{1}{2}}\|\nabla^2b\|_{L^{2}}^{\frac{1}{2}}\| \nabla^2\partial_2b\|_{L^{2}}^{\frac{1}{2}}\|\nabla^3b\|_{H^{1}}\nonumber\\
&\quad+ C\|b\|_{L^{\infty}}\|\nabla^3 b\|_{H^1}^2\nonumber\\
&\leq  C\|b\|_{H^{3}}\|\nabla^3 b\|_{H^1}^2.
\end{align*}

Also, for the last term in (\ref{2152}), applying H\"{o}lder's inequality and Young's inequality, we have
 \begin{align*}
-2\gamma\mu\alpha\sum_{i=1}^2\int_{\R^2}\partial_i^3 (0,u_2)^{T}\cdot\partial_i^3\partial_2 bdx
\leq \frac{2\mu}{3}\sum_{i=1}^2\| \partial_i^3u_2\|_{L^2}^2+\frac{3\gamma^2\alpha^2\mu}{2}\sum_{i=1}^2\|\nabla \partial_i^3b\|_{L^2}^2.
\end{align*}

Consequently, one has
\begin{align}\label{B13-222}
B_{72,2}
&\leq-2\gamma\alpha\sum_{i=1}^2\frac{d}{dt}\int_{\R^2}\partial_i^3u\cdot\partial_i^3 \partial_2bdx-A_2+ \frac{2\mu}{3}\sum_{i=1}^2\| \partial_i^3u_2\|_{L^2}^2\nonumber\\
&\quad+(2\gamma\alpha^2+\frac{3\gamma^2\alpha^2\mu}{2})\sum_{i=1}^2\|\nabla \partial_i^3b\|_{L^2}^2+C\|b\|_{H^{3}}\|\nabla^3 b\|_{H^1}^2.
\end{align}

Thanks to \eqref{A112}-\eqref{B72-de}, \eqref{A13} and \eqref{B13-222} one further arrives at
\begin{align}\label{B7-222}
B_{7}
&\leq-2\gamma\alpha\sum_{i=1}^2\frac{d}{dt}\int_{\R^2}\partial_i^3u\cdot\partial_i^3 \partial_2bdx-A_2+(2\gamma\alpha^2+\frac{3\gamma^2\alpha^2\mu}{2})\sum_{i=1}^2\|\nabla\partial_i^3 b\|_{L^2}^2\nonumber\\
&\quad+ \frac{2\mu}{3}\sum_{i=1}^2\| \partial_i^3u_2\|_{L^2}^2+C\| (u,b)\|_{H^{3}}(\|\nabla^2\partial_1u\|_{L^2}^2+\|\nabla^2\partial_2u\|_{L^2}^2+\|\nabla^3 b\|_{H^1}^2).
\end{align}

It follows from \eqref{A8}, \eqref{A9}, \eqref{A11}, \eqref{A10} and \eqref{B7-222}  that
\begin{align}\label{B2-222}
B_2
\leq&
-2\gamma\sum_{i=1}^2\frac{d}{dt}\int_{\R^2}(\alpha\partial_i^3u\cdot\partial_i^3\partial_2b+\gamma\partial_{t}\partial_i^3b\cdot (u\cdot\nabla)\partial_i^3 b)dx-{A_{2}}\notag\\
&+C\left(\|(u,b)\|_{H^{3}}+\|(u,b)\|_{H^{3}}^2\right)\left(\|\partial_t\nabla^3b\|_{L^2}^2+\|\nabla^2\partial_1u\|_{L^2}^2+\|\nabla^2\partial_2u\|_{L^2}^2+\|\nabla^3 b\|_{H^1}^2\right)\notag\\
&+\frac{2\mu}{3}\sum_{i=1}^2\| \partial_i^3u_2\|_{L^2}^2+(2\gamma\alpha^2+\frac{3\gamma^2\alpha^2\mu}{2})\sum_{i=1}^2\|\nabla\partial_i^3 b\|_{L^2}^2.
\end{align}

Therefore, combining \eqref{BBBB}, \eqref{A7} with \eqref{B2-222}, we can get \eqref{AA2}.
\end{proof}

\vskip.1in
\textbf{Completeness of the proof of Proposition \ref{A1b}}: Proposition \ref{A1b} then follows from Lemma \ref{2.3} and \ref{2.4} immediately.
\hfill$\square$

\vskip .3in
\section{Proof of Theorem \ref{Thm1}}
\label{Sta}

\par In this section, we are devoted to proving Theorem \ref{Thm1}. Since the local existence for \eqref{type-wave-1} can be established by the standard approach (see \cite{MB}), we focus here on the global $a$ $priori$ estimate. Then the local solution can be extended to a global solution and  the stability result in Theorem \ref{Thm1} can be obtained by the bootstrapping argument (see \cite[p.21]{Tao}).

\subsection{A Priori Estimate on $\mathcal{E}_{01}(t)$ }
\label{Set1}
 To begin with, we deal with the natural energy functional $\mathcal{E}_{01}(t)$. More precisely, we prove next proposition.
\begin{Prop}\label{Pro11}
Let $\mathcal{E}_{01}(t)$ and $\mathcal{E}_{02}(t)$ be defined as \eqref{E00} and \eqref{E01}, respectively. Then there exists a positive constant $C$, such that
\begin{align}\label{in3.1}
\mathcal{E}_{01}(t)\leq& C\left(\mathcal{E}_{01}(0)+\mathcal{E}_{01}^{\frac{3}{2}}(0)+\mathcal{E}_{01}^{\frac{3}{2}}(t)+\mathcal{E}_{02}^{\frac{3}{2}}(t)+\mathcal{E}_{01}^{2}(t)+\mathcal{E}_{02}^{2}(t)\right).
\end{align}
\end{Prop}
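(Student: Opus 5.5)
The plan is to derive an $H^3$ energy identity by combining the standard $L^2$-type pairing with a "hyperbolic" pairing adapted to $\gamma\partial_{tt}b$. Then Proposition~\ref{A1b} is used at the one place where a derivative is lost.

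\textbf{Step 1: the energy identities.} For $m=0$ and for $\partial_i^3$ ($i=1,2$), apply the operator to \eqref{type-wave-1} and use the norm equivalence $\|f\|_{H^3}^2\sim\|f\|_{L^2}^2+\sum_i\|\partial_i^3 f\|_{L^2}^2$. Make three pairings:
\begin{itemize}
\item the $u$-equation with $\partial_i^3u$;
\item the $b$-equation with $2\gamma\partial_t\partial_i^3 b$;
\item the $b$-equation with $\partial_i^3 b$.
\end{itemize}
The $\partial_t b$-pairing gives $\frac{d}{dt}(\gamma^2\|\partial_t\partial_i^3b\|^2+\gamma\eta\|\nabla\partial_i^3 b\|^2)+2\gamma\|\partial_t\partial_i^3b\|^2$. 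The $b$-pairing gives $\frac{d}{dt}(\frac12\|\partial_i^3b\|^2+\gamma\int\partial_t\partial_i^3b\cdot\partial_i^3b)+\eta\|\nabla\partial_i^3b\|^2-\gamma\|\partial_t\partial_i^3b\|^2$.

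Summing with suitable weights yields a positive-definite energy, equivalent to the bracket in $\mathcal{E}_{01}$, and the dissipation $\mu\|u_2\|_{H^3}^2+\eta\|\nabla b\|_{H^3}^2+\gamma\|\partial_tb\|_{H^3}^2$. The linear coupling terms $\alpha\int\partial_2\partial_i^3b\cdot\partial_i^3u+\alpha\int\partial_2\partial_i^3u\cdot\partial_i^3b$ cancel by integration by parts. What remains of the $\partial_t b$-pairing is exactly $A_2$. Its nonlinear part $2\gamma\int(b\cdot\nabla)\partial_i^3u\cdot\partial_t\partial_i^3b$ (after commutators) is $A_1$.

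\textbf{Step 2: the commutator and triple terms.} All commutator and remaining nonlinear terms have the form $\int\partial_i^3(u\cdot\nabla u)\cdot\partial_i^3u$, $\int\partial_i^3(b\cdot\nabla b)\cdot\partial_i^3u$, and so on. I will bound them, as in Lemma~\ref{2.3}, by $C(\|(u,b,\partial_tb)\|_{H^3}+\|\cdot\|_{H^3}^2)$ times
\[
\|\nabla^2\partial_1u\|_{L^2}^2+\|\nabla^2\partial_2 u\|_{L^2}^2+\|\nabla b\|_{H^3}^2+\|\partial_tb\|_{H^3}^2 .
\]
The tools are Lemma~\ref{2.1}, $\nabla\cdot u=0$, and the cancellation $\int(u\cdot\nabla)\partial_i^3u\cdot\partial_i^3u=0$. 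The key anisotropic triple term is $\int\partial_i^3u\cdot\nabla u\cdot\partial_i^3u$. I split it via $\partial_1u_1=-\partial_2u_2$ so that each piece contains either $\partial_2u$ or $u_2$. The $\partial_2 u$ factor is controlled by $\mathcal{E}_{02}$ through $\alpha\int\|\partial_2u\|_{H^2}^2$. For $\|\nabla^2\partial_1u\|$, I use the identity $\partial_1u_1=-\partial_2u_2$ together with $\partial_1u_2$ being controlled by $\|u_2\|_{H^3}$; note $\|\partial_1 u\|_{H^2}\lesssim\|\partial_2 u\|_{H^2}+\|u_2\|_{H^3}$.

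\textbf{Step 3: $A_1+A_2$ via Proposition~\ref{A1b}.} Invoke \eqref{AAA1}, which converts $A_1+A_2$ into three things:
\begin{itemize}
\item a time derivative;
\item cubic and quartic terms of the above type;
\item $\frac{2\mu}{3}\sum\|\partial_i^3u_2\|^2+(2\gamma\alpha^2+\frac32\gamma^2\alpha^2\mu)\sum\|\nabla\partial_i^3 b\|^2$.
\end{itemize}
These last terms are absorbed into the dissipation. The first is absorbed since $\frac23<1$ relative to $\mu\|u_2\|_{H^3}^2$ with coefficient weight $2$. The second is absorbed because $\alpha\le\alpha_0$ is small relative to $\eta$. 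Integrate in time. The boundary terms from the $\frac{d}{dt}$ in \eqref{AAA1} are cubic or quadratic:
\begin{itemize}
\item $\int(b\cdot\nabla)\partial_i^3b\cdot\partial_i^3u$ and $\gamma\int\partial_t\partial_i^3b\,(u\cdot\nabla)\partial_i^3b$ are bounded by $\|(u,b,\partial_t b)\|_{H^3}\|\nabla b\|_{H^3}^2$ or similar, giving $\mathcal{E}_{01}^{3/2}(t)$ and $\mathcal{E}_{01}^{3/2}(0)$;
\item the quadratic term $\alpha\int\partial_i^3u\cdot\partial_i^3\partial_2b$ is absorbed using $\alpha$ small, via Young: $\alpha(\|u\|_{H^3}^2+\|\nabla b\|_{H^3}^2)$, with $\gamma\eta\|\nabla b\|_{H^3}^2$ on the left.
\end{itemize}
The integrated nonlinear terms give $\sup\|\cdot\|_{H^3}\cdot\int(\dots)\lesssim(\mathcal{E}_{01}+\mathcal{E}_{02})^{3/2}+(\dots)^2$. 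At $t=0$, the $H^4$ norm of $b_0$ enters $\mathcal{E}_{01}(0)$.

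\textbf{Main obstacle.} The main difficulty is bounding the velocity triple terms without any velocity dissipation, in particular $\int\partial_i^3u_1\,\partial_1u_1\,\partial_i^3u_1$-type pieces. These must all be rewritten so that each contains a $\partial_2u$ or $u_2$ factor controlled in $L^2_t$. I expect to carry over the Boardman–Lin–Wu decomposition (using $\partial_1u_1=-\partial_2u_2$ and integration by parts in $x_1$) together with Lemma~\ref{2.1}. This is also why $\mathcal{E}_{02}$ appears on the right of \eqref{in3.1}.
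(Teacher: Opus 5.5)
Your treatment of the $\dot H^3$ level is the paper's own argument. It uses the same three pairings (\eqref{3.18} plus $2\gamma$ times \eqref{3.19}) and the same appeal to \eqref{AAA1} to dispose of $A_1+A_2$. It absorbs $\frac{2\mu}{3}\sum_i\|\partial_i^3u_2\|_{L^2}^2$ and the $\alpha^2$-terms into the dissipation, and applies Young's inequality to the boundary terms as in \eqref{Y1}--\eqref{Y4}.

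\textbf{The gap is the $L^2$ level, which you never address.} There the $\partial_tb$-pairing produces the linear coupling $2\gamma\alpha\int\partial_2u\cdot\partial_tb\,dx$, and \eqref{AAA1} does not cover it. A Young bound leaves a term $C\gamma\alpha\,\mathcal{E}_{02}(t)$ on the right-hand side. That term is linear in $\mathcal{E}_{02}$, which \eqref{in3.1} does not allow, and the bootstrap depends on this.

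The fix is the same mechanism that cancels $A_2$. Write $\int\partial_2u\cdot\partial_tb\,dx=-\frac{d}{dt}\int u\cdot\partial_2b\,dx+\int\partial_tu\cdot\partial_2b\,dx$ and substitute the velocity equation. This produces:
\begin{itemize}
\item $\alpha\|\partial_2b\|_{L^2}^2$, absorbed by $\eta\|\nabla b\|_{L^2}^2$ for small $\alpha$;
\item a $\mu u_2$ term, handled by Young's inequality;
\item a boundary term, absorbed as in \eqref{Y3};
\item cubic terms such as $\int(u\cdot\nabla)u\cdot\partial_2b\,dx$, which at this level lose no derivative.
\end{itemize}
The pressure drops out because $\nabla\cdot b=0$. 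The paper reaches the same cancellation through the longer chain of Parts I--III in Step I, which mirrors Proposition~\ref{A1b}. The nonlinear analogue $2\gamma\int(b\cdot\nabla)u\cdot\partial_tb\,dx$ at this level can simply be bounded by $C\|b\|_{L^\infty}\|\nabla u\|_{L^2}\|\partial_tb\|_{L^2}$.

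What you call the main obstacle is not one. Since $\|\partial_1u\|_{H^2}=\|\nabla u_2\|_{H^2}$, which you essentially note yourself, $\|\nabla u\|_{H^2}^2=\|\nabla u_2\|_{H^2}^2+\|\partial_2u\|_{H^2}^2$ is entirely integrable in time. The velocity triple terms are therefore bounded directly by $C\|u\|_{H^3}\|\nabla u\|_{H^2}^2$, and no Boardman--Lin--Wu type splitting is needed.
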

\begin{proof}
Due to the equivalence of the norm $\|(u,b)\|_{H^3}\sim \|(u,b)\|_{L^2}+\|(u,b)\|_{\dot{H}^3}$, it suffices to estimate $\|(u,b)\|_{L^2}$ and $\|(u,b)\|_{\dot{H}^3}$.
\vskip.1in
\par \textbf{Step I. Estimate of ${L^2}$-norm}
\par By the standard energy estimate, we get
\begin{align}\label{3.1}
&\frac{1}{2}\frac{d}{dt}\big(\|u\|^2_{L^2}+\|b\|^2_{L^2}+2\gamma\int_{\R^2}\partial_tb\cdot bdx\big)+\mu\|u_2\|^2_{L^2}+\eta\|\nabla b\|^2_{L^2}=\gamma\|\partial_tb\|^2_{L^2}.
\end{align}

Taking the ${L^2}$ inner product of \eqref{type-wave-1}$_2$ with $\partial_tb$ and integration by parts yields
\begin{align}\label{3.2}
\frac{1}{2}\frac{d}{dt}\big(\gamma\|\partial_tb\|^2_{L^2}+\eta\|\nabla b\|^2_{L^2}+2\int_{\R^2}(b\cdot\nabla) b\cdot udx\big)+\|\partial_tb\|^2_{L^2}
=D_1+D_2,
\end{align}
where
\begin{align*}
D_1=\int_{\R^2}(\partial_tb\cdot\nabla) b\cdot udx
-\int_{\R^2}(u\cdot\nabla) b\cdot\partial_tbdx
\end{align*}
and
\begin{align}\label{D_2}
D_2=\int_{\R^2}(b\cdot\nabla) b\cdot \partial_t udx
+\alpha\int_{\R^2}\partial_2u\cdot\partial_tbdx.
\end{align}

By multiplying \eqref{3.2} by $2\gamma$ and adding the resultant to \eqref{3.1}, we obtain
\begin{align}\label{mhd33}
&\frac{1}{2}\frac{d}{dt}\Big(\|u\|^2_{L^2}+\|b\|^2_{L^2}+2\gamma^2\|\partial_tb\|^2_{L^2}
+2\gamma\eta\|\nabla b\|^2_{L^2}+4\gamma\int_{\R^2}(b\cdot\nabla) b\cdot u dx+2\gamma\int_{\R^2}\partial_tb\cdot bdx
\Big)\nonumber\\
&+\mu\|u_2\|^2_{L^2}+\eta\|\nabla b\|^2_{L^2}+\gamma\|\partial_tb\|^2_{L^2}
=2\gamma(D_1+D_2).
\end{align}

By virtue of H\"{o}lder's inequality and Sobolev's inequality, we obtain
\begin{align}\label{D-1}
D_{1}
\leq&
C\|u\|_{L^\infty}\|\partial_t b\|_{L^2}\|\nabla b\|_{L^2}
\leq C\|u\|_{H^3}\left(\|\nabla b\|_{L^2}^2+\|\partial_t b\|_{L^2}^2\right).
\end{align}

 For $D_{2}$, we divide into three parts to proceed.

 {\it {Part I.}} In this part we  use \eqref{type-wave-1}$_1$ to replace $\partial_tu$, and integration by parts and then get
\begin{align}\label{diff decom-1}
&\int_{\R^2}(b\cdot \nabla)b\cdot \partial_tudx\nonumber\\
&=\int_{\R^2}(b\cdot\nabla) b\cdot(-\mu (0,u_2)^{T}  +(b \cdot \nabla) b+\alpha\partial_2 b)dx
\nonumber\\
&\quad-\int_{\R^2}(b\cdot\nabla) b\cdot\nabla pdx-\int_{\R^2}(b\cdot\nabla) b\cdot (u\cdot\nabla) udx
\nonumber\\
&=\int_{\R^2}(b\cdot\nabla) b\cdot(-\mu (0,u_2)^{T} +(b \cdot \nabla) b+\alpha\partial_2 b)dx-\int_{R^2}(b\cdot\nabla) b\cdot\nabla pdx
\nonumber\\
&\quad+\int_{\R^2}((u\cdot \nabla b)\cdot \nabla)b\cdot udx-\int_{\R^2}((b\cdot \nabla u)\cdot \nabla)b\cdot u dx-\int_{\R^2}(b\cdot \nabla u)\cdot(u\cdot \nabla b)dx.
\end{align}

We can note that
\begin{align}\label{D21}
&\int_{\R^2}(b\cdot\nabla) b\cdot(-\mu (0,u_2)^{T}  +(b \cdot \nabla) b+\alpha\partial_2 b)dx\nonumber\\&
\leq
C\|b\|_{L^\infty}\|\nabla b\|_{L^2}\|u_2\|_{L^2}+C\|b\|_{L^\infty}^2\|\nabla b\|_{L^2}^2+C\|b\|_{L^\infty}\|\nabla b\|_{L^2}\|\partial_2b\|_{L^2}\nonumber\\
&\leq C\left(\|b\|_{H^3}+\|b\|_{H^3}^2\right)\left(\|\nabla b\|_{L^2}^2+\|u_2\|_{L^2}^2\right).
\end{align}
And in view of \eqref{P} we obtain
\begin{align}\label{D22}
&-\int_{\R^2}(b\cdot\nabla) b\cdot\nabla pdx
\nonumber\\&\leq
C\|b\|_{L^\infty}\|\nabla b\|_{L^2}\|\nabla p\|_{L^2}\nonumber\\
&\leq C\|b\|_{H^3}\|\nabla b\|_{L^2}\Big(\|u\|_{H^2}\|\nabla u\|_{L^{2}}+\|b\|_{H^2}\|\nabla b\|_{L^{2}}+\|u_2\|_{L^2}\Big)\nonumber\\
&\leq C\left(\|(u,b)\|_{H^3}+\|(u,b)\|_{H^3}^2\right)\left(\|\nabla u\|_{L^{2}}^2+\|u_2\|_{L^2}^2+\|\nabla b\|_{L^2}^2\right).
\end{align}
Moreover, applying H\"{o}lder's inequality and Sobolev's inequality, we obtain
\begin{align}\label{D23-1}
&\int_{\R^2}((u\cdot \nabla b)\cdot \nabla)b\cdot udx-\int_{\R^2}((b\cdot \nabla u)\cdot \nabla)b\cdot u dx
\nonumber\\ &\leq
C\|u\|_{L^\infty}^2\|\nabla b\|_{L^2}^2+C\|u\|_{L^\infty}\|b\|_{L^\infty}\|\nabla u\|_{L^2}\|\nabla b\|_{L^2}\nonumber\\
&\leq C\|u\|_{H^2}^2\|\nabla b\|_{L^2}^2+C\|u\|_{H^3}\|b\|_{H^3}\|\nabla u\|_{L^2}\|\nabla b\|_{L^2}\nonumber\\
&\leq C\|(u,b)\|_{H^3}^2\left(\|\nabla b\|_{L^2}^2+\|\nabla u\|_{L^2}^2\right).
\end{align}

Finally, submitting \eqref{D21}-\eqref{D23-1} into \eqref{diff decom-1} leads to
\begin{align}\label{diff decom-1-1}
\int_{\R^2}(b\cdot \nabla)b\cdot \partial_tudx&\leq-\int_{\R^2}(b\cdot \nabla u)\cdot(u\cdot \nabla b)dx\nonumber\\
& \quad+C\left(\|(u,b)\|_{H^3}+\|(u,b)\|_{H^3}^2\right)\left(\|\nabla u\|_{L^{2}}^2+\|u_2\|_{L^2}^2+\|\nabla b\|_{L^2}^2\right).
\end{align}

 {\it {Part II.}} In this part we  use \eqref{type-wave-1}$_2$ to replace $(b\cdot \nabla) u$ in \eqref{diff decom-1-1}, and integration by parts twice and then have
\begin{align}
&-\int_{\R^2}(b\cdot \nabla u)\cdot(u\cdot \nabla b)dx\nonumber\\
&=-\gamma\int_{\R^2}\partial_{tt}b\cdot (u\cdot\nabla )bdx-\int_{\R^2}\partial_{t}b\cdot (u\cdot\nabla )bdx-\int_{\R^2}(u\cdot\nabla) b\cdot (u\cdot\nabla) bdx\nonumber\\
&\quad+\eta\int_{\R^2}\Delta b\cdot (u\cdot\nabla) bdx+\alpha\int_{\R^2}\partial_2u\cdot (u\cdot\nabla b)dx\nonumber\\
&=-\gamma\frac{d}{dt}\int_{\R^2}\partial_{t}b\cdot (u\cdot\nabla) bdx+\gamma\int_{\R^2}\partial_{t}b\cdot (\partial_{t}u\cdot\nabla) bdx\nonumber\\
&\quad-\int_{\R^2}\partial_{t}b\cdot (u\cdot\nabla) bdx-\int_{\R^2}(u\cdot\nabla) b\cdot (u\cdot\nabla) bdx+\eta\int_{\R^2}\Delta b\cdot (u\cdot\nabla) bdx\nonumber\\
&\quad+\alpha\int_{\R^2}(\partial_2u\cdot \nabla) u\cdot bdx+\alpha\int_{\R^2}(u\cdot \nabla )u\cdot\partial_2 bdx.
\label{diff decom-2}
\end{align}

Next in order to deal with the second term in \eqref{diff decom-2}, we take advantage of $\eqref{type-wave-1}_1$and \eqref{P}, and then arrive at
\begin{align}\label{DD}
&\gamma\int_{\R^2}\partial_{t}b\cdot (\partial_{t}u\cdot\nabla) bdx\nonumber\\
&=\gamma\int_{\R^2}\partial_{t}b\cdot (-\mu(0,u_2)^{T} - (u \cdot \nabla) u -\nabla p+(b \cdot \nabla) b+\alpha\partial_2 b)\cdot\nabla bdx\nonumber\\
&\leq  CV(t)\left(\|\nabla u\|_{L^{2}}^2+\|u_2\|_{L^2}^2+\|\nabla b\|_{L^2}^2+\|\partial_tb\|_{L^{2}}^2\right),
\end{align}
where
\begin{align}\label{VT}
V(t)=\|(u,b,\partial_tb)\|_{H^{3}}+\|(u,b,\partial_tb)\|_{H^{3}}^2.
\end{align}

Furthermore, the estimates hold
\begin{align}\label{DDD}
&-\int_{\R^2}\partial_{t}b\cdot (u\cdot\nabla) bdx-\int_{\R^2}(u\cdot\nabla) b\cdot (u\cdot\nabla) bdx+\eta\int_{\R^2}\Delta b\cdot (u\cdot\nabla) bdx\nonumber\\
&\leq
C\|u\|_{L^\infty}\|\nabla b\|_{L^2}\|\partial_tb\|_{L^2}+C\|u\|_{L^\infty}^2\|\nabla b\|_{L^2}^2+C\|u\|_{L^\infty}\|\Delta b\|_{L^2}\|\nabla b\|_{L^2}\nonumber\\
&\leq C(\|u\|_{H^3}+\|u\|_{H^3}^2)\left(\|\nabla b\|_{H^1}^2+\|\partial_tb\|_{L^2}^2\right)
\end{align}
and
\begin{align}\label{DDDD}
&\alpha\int_{\R^2}(\partial_2u\cdot \nabla) u\cdot b dx
\leq C\|b\|_{H^{3}}\|\nabla u\|_{L^{2}}^2.
\end{align}
Inserting \eqref{DD}-\eqref{DDDD} into \eqref{diff decom-2}, it follows
\begin{align}\label{diff decom-2-2}
-\int_{\R^2}(b\cdot \nabla u)\cdot(u\cdot \nabla b)dx
&\leq-\gamma\frac{d}{dt}\int_{\R^2}\partial_{t}b\cdot (u\cdot\nabla) bdx+\alpha\int_{\R^2}(u\cdot \nabla) u\cdot\partial_2 bdx\nonumber\\
&\quad +CV(t)\left(\|\nabla u\|_{L^{2}}^2+\|u_2\|_{L^2}^2+\|\nabla b\|_{H^1}^2+\|\partial_tb\|_{L^{2}}^2\right).
\end{align}

{\it {Part III.}} In this part, we  handle the second term on the right-hand side of \eqref{diff decom-2-2}, the strategy here is to replace $(u\cdot \nabla) u$ by using the velocity field equation \eqref{type-wave-1}$_1$, and integration by parts and then have \begin{align*}
&\alpha\int_{\R^2}(u\cdot \nabla)u\cdot\partial_2 bdx\nonumber\\
&=-\alpha\int_{\R^2}\partial_tu\cdot\partial_2 bdx+\alpha\int_{\R^2}(b\cdot\nabla )b\cdot\partial_2 bdx+\alpha^2\|\partial_2b\|_{L^2}^2\nonumber\\
&\quad-\alpha\mu\int_{\R^2}(0,u_2)^{T}\cdot\partial_2 bdx-\alpha\int_{\R^2}\nabla p\cdot\partial_2 bdx\nonumber\\
&=-\alpha\frac{d}{dt}\int_{\R^2}u\cdot\partial_2 bdx-\alpha\int_{\R^2}\partial_2u\cdot\partial_t bdx+\alpha\int_{\R^2}(b\cdot\nabla) b\cdot\partial_2 bdx\nonumber\\
&\quad+\alpha^2\|\partial_2b\|_{L^2}^2-\alpha\mu\int_{\R^2}(0,u_2)^{T}\cdot\partial_2 bdx.
\end{align*}

It is easy to find that
\begin{align*}
&\alpha\int_{\R^2}(b\cdot\nabla) b\cdot\partial_2 bdx\leq C\|b\|_{L^{\infty}}\|\nabla b\|_{L^{2}}^2\leq C\| b\|_{H^{3}}\|\nabla b\|_{L^{2}}^2
\end{align*}
and
\begin{align*}
&-\alpha\mu\int_{\R^2}(0,u_2)^{T}\cdot\partial_2 bdx\leq\mu\alpha\|u_2\|_{L^{2}}\|\nabla b\|_{L^{2}}\leq \frac{\mu}{3\gamma}\| u_2\|_{L^{2}}^{2}+\frac{3\gamma\alpha^2\mu}{4}\|\nabla b\|_{L^{2}}^2.
\end{align*}
In summary, we have
 \begin{align}\label{L-1}
\alpha\int_{\R^2}(u\cdot \nabla) u\cdot\partial_2 bdx
&\leq-\alpha\frac{d}{dt}\int_{\R^2}u\cdot\partial_2 bdx-\alpha\int_{\R^2}\partial_2u\cdot\partial_t bdx+\alpha^2\|\partial_2b\|_{L^2}^2\nonumber\\
&\quad+\frac{\mu}{3\gamma}\| u_2\|_{L^{2}}^{2}+\frac{3\gamma\alpha^2\mu}{4}\|\nabla b\|_{L^{2}}^2+ C\|b\|_{H^3}\|\nabla b\|_{L^2}^2.
\end{align}

Combining \eqref{diff decom-1-1},  \eqref{diff decom-2-2} and \eqref{L-1}, we have
\begin{align}\label{DDDDDD}
\int_{\R^2}(b\cdot \nabla)b\cdot \partial_tudx&\leq-\gamma\frac{d}{dt}\int_{\R^2}\partial_{t}b\cdot (u\cdot\nabla) bdx-\alpha\frac{d}{dt}\int_{\R^2}u\cdot\partial_2 bdx-\alpha\int_{\R^2}\partial_2u\cdot\partial_t bdx\nonumber\\
&\quad+\alpha^2\|\partial_2b\|_{L^2}^2+\frac{\mu}{3\gamma}\| u_2\|_{L^{2}}^{2}+\frac{3\gamma\alpha^2\mu}{4}\|\nabla b\|_{L^{2}}^2\nonumber\\
&\quad+CV(t)\left(\|\nabla u\|_{L^{2}}^2+\|u_2\|_{L^2}^2+\|\nabla b\|_{H^1}^2+\|\partial_tb\|_{L^{2}}^2\right).
\end{align}

 Thus, from \eqref{D_2} and \eqref{DDDDDD}, we have

\begin{align}\label{DDDDDD1}
D_2&\leq-\gamma\frac{d}{dt}\int_{\R^2}\partial_{t}b\cdot (u\cdot\nabla) bdx-\alpha\frac{d}{dt}\int_{\R^2}u\cdot\partial_2 bdx\nonumber\\
&\quad+\alpha^2\|\partial_2b\|_{L^2}^2+\frac{\mu}{3\gamma}\| u_2\|_{L^{2}}^{2}+\frac{3\gamma\alpha^2\mu}{4}\|\nabla b\|_{L^{2}}^2\nonumber\\
&\quad+CV(t)\left(\|\nabla u\|_{L^{2}}^2+\|u_2\|_{L^2}^2+\|\nabla b\|_{H^1}^2+\|\partial_tb\|_{L^{2}}^2\right).
\end{align}

 Substituting \eqref{D-1} and \eqref{DDDDDD1} into \eqref{mhd33}, we find
\begin{align}\label{mhd333}
&\frac{1}{2}\frac{d}{dt}\Big(\|u\|^2_{L^2}+\|b\|^2_{L^2}+2\gamma^2\|\partial_tb\|^2_{L^2}+2\gamma\eta\|\nabla b\|^2_{L^2}+2\gamma\int_{\R^2}\partial_tb\cdot bdx\nonumber\\&\qquad+4\gamma\int_{\R^2}(b\cdot\nabla) b\cdot u dx+4\gamma^2\int_{\R^2}\partial_{t}b\cdot (u\cdot\nabla) bdx+4\gamma\alpha\int_{\R^2}u\cdot\partial_2 bdx
\Big)\nonumber\\
&\qquad+\frac{\mu}{3}\|u_2\|^2_{L^2}+(\eta-2\gamma\alpha^2-\frac{3\gamma^2\alpha^2\mu}{2})\|\nabla b\|^2_{L^2}+\gamma\|\partial_tb\|^2_{L^2}\nonumber\\
&\qquad\leq CV(t)\left(\|\nabla u\|_{L^{2}}^2+\|u_2\|_{L^2}^2+\|\nabla b\|_{H^1}^2+\|\partial_tb\|_{L^{2}}^2\right).
\end{align}

Integrating \eqref{mhd333} over $[0,t](t>0)$ yields
\begin{align*}
&\|u\|^2_{L^2}+\|b\|^2_{L^2}+2\gamma^2\|\partial_tb\|^2_{L^2}
+2\gamma\eta\|\nabla b\|^2_{L^2}+2\gamma\int_{\R^2}\partial_tb\cdot bdx
\nonumber\\
&+4\gamma\int_{\R^2}(b\cdot\nabla) b\cdot udx+4\gamma^2\int_{\R^2}\partial_{t}b\cdot (u\cdot\nabla )bdx+
4\gamma\alpha\int_{\R^2}u\cdot\partial_2 bdx\nonumber\\
&+\int_{0}^t\bigg(\frac{2\mu}{3}\|u_2(\tau)\|^2_{L^2}+(2\eta-4\gamma\alpha^2-3\gamma^2\alpha^2\mu)\|\nabla b(\tau)\|^2_{L^2}+2\gamma\|\partial_{\tau}b(\tau)\|^2_{L^2}\bigg)d\tau\nonumber\\
&\leq\|u_0\|^2_{L^2}+\|b_0\|^2_{L^2}+\gamma^2\|a_0\|^2_{L^2}
+2\gamma\eta\|\nabla b_0\|^2_{L^2}+2\gamma\int_{\R^2}a_0\cdot b_0dx\nonumber \\
&\quad+4\gamma\int_{\R^2}(b_0\cdot\nabla) b_0\cdot u_0dx+4\gamma^2\int_{\R^2}a_0\cdot (u_0\cdot\nabla) b_0dx+
4\gamma\alpha\int_{\R^2}u_0\cdot\partial_2 b_0dx\nonumber\\
&\quad+C\sup_{0\leq\tau\leq t}V(\tau)\int_0^t\left(\|\nabla u(\tau)\|_{L^{2}}^2+\|u_2(\tau)\|_{L^2}^2+\|\nabla b(\tau)\|_{H^1}^2+\|\partial_{\tau}b(\tau)\|_{L^{2}}^2\right)d\tau.
\end{align*}
With the help of the following inequalities:
\begin{align*}
2\gamma\int_{\R^2}\partial_tb\cdot bdx
\leq & \frac{2}{3}\|b\|_{L^2}^2+\frac{3\gamma^2}{2}\|\partial_t b\|_{L^2}^2, \\
4\gamma\alpha\int_{\R^2}u\cdot \partial_2bdx
\leq & \frac{2}{3}\|u\|_{L^2}^2+6\gamma^2\alpha^2\|\nabla b\|_{L^2}^2,\\
2\gamma\int_{\R^2}a_0\cdot b_0dx
\leq & \frac{1}{\sqrt{2}}\|b_0\|_{L^2}^2+\sqrt{2}\gamma^2\|a_0\|_{L^2}^2,\\
4\gamma\alpha\int_{\R^2}u_0\cdot\partial_2 b_0dx
\leq & \frac{1}{\sqrt{2}}\|u_0\|_{L^2}^2+4\sqrt{2}\gamma^2\alpha^2\|\partial_2 b_0\|_{L^2}^2,
\end{align*}
one further has
\begin{align}\label{energy1}
&\|u\|^2_{L^2}+\|b\|^2_{L^2}+\frac{3\gamma^2}{2}\|\partial_tb\|^2_{L^2}
+6\gamma(\eta-3\gamma\alpha^2)\|\nabla b\|^2_{L^2}\nonumber\\
&
+\int_{0}^t\left(2\mu\|u_2(\tau)\|^2_{L^2}+(6\eta-12\gamma\alpha^2-9\gamma^2\alpha^2\mu)\|\nabla b(\tau)\|^2_{L^2}+6\gamma\|\partial_{\tau}b(\tau)\|^2_{L^2}\right)d\tau\nonumber\\
&+12\gamma\int_{\R^2}(b\cdot\nabla) b\cdot udx+12\gamma^2\int_{\R^2}\partial_{t}b\cdot (u\cdot\nabla) bdx\nonumber\\
&\leq C\left(\|u_0\|^2_{L^2}+\|b_0\|^2_{L^2}+\gamma^2\|\partial_tb_0\|^2_{L^2}
+2\gamma\eta\|\nabla b_0\|^2_{L^2}\right)\nonumber\\
&\quad+12\gamma\int_{\R^2}(b_0\cdot\nabla) b_0\cdot u_0dx+12\gamma^2\int_{\R^2}a_0\cdot( u_0\cdot\nabla) b_0dx\nonumber\\
&\quad+C\sup_{0\leq\tau\leq t}V(\tau)\int_0^t\left(\|\nabla u(\tau)\|_{L^{2}}^2+\|u_2(\tau)\|_{L^2}^2+\|\nabla b(\tau)\|_{H^1}^2+\|\partial_{\tau}b(\tau)\|_{L^{2}}^2\right)d\tau.
\end{align}

\vskip.1in
\textbf{Step II. Estimate of ${\dot{H}^3}$-norm}
\par We apply the operator $\partial_i^3(i=1,2)$ to \eqref{type-wave-1}$_1$ and \eqref{type-wave-1}$_2$, multiply the resulting equations by $\partial_i^3u$ and $\partial_i^3b$ respectively, and then sum them up to obtain
\begin{align}\label{3.18}
&\frac{1}{2}\frac{d}{dt}\sum_{i=1}^2\big(\|\partial_i^3u\|^2_{L^2}+\|\partial_i^3b\|^2_{L^2}+2\gamma\int_{\R^2}\partial_t\partial_i^3b\cdot\partial_i^3bdx\big)+\mu\sum_{i=1}^2\|\partial_i^{3}u_2\|^2_{L^2}\nonumber\\
&+\eta\sum_{i=1}^2\|\nabla\partial_i^{3} b\|^2_{L^2}-\gamma\sum_{i=1}^2\|\partial_t\partial_i^3b\|^2_{L^2}\nonumber\\
&=-\sum_{i=1}^2\int_{\R^2}\partial_i^3((u\cdot\nabla) u)\cdot\partial_i^3udx+\sum_{i=1}^{2}\int_{\R^2}\big(\partial_{i}^3(b\cdot\nabla b)\cdot\partial_{i}^3u+\partial_{i}^3((b\cdot\nabla) u)\cdot\partial_{i}^3b\big)dx\nonumber\\
&\quad-\sum_{i=1}^2\int_{\R^2}\partial_i^3((u\cdot \nabla) b)\cdot\partial_i^3bdx\nonumber\\
&:=I_1+I_2+I_3.
\end{align}

Moreover, applying the operator $\partial_i^3$ to \eqref{type-wave-1}$_2$ and multiplying the resulting equation by $\partial_t\partial_i^3b$ give
\begin{align}\label{3.19}
&\frac{1}{2}\frac{d}{dt}\sum_{i=1}^2\big(\gamma\|\partial_t\partial_i^3b\|^2_{L^2}+\eta\|\nabla\partial_i^3 b\|^2_{L^2}\big)+\sum_{i=1}^2\|\partial_t\partial_i^3b\|^2_{L^2}\nonumber\\
&=\sum_{i=1}^2\int_{\R^2}\partial_i^3((b\cdot\nabla) u)\cdot\partial_t\partial_i^3bdx-\sum_{i=1}^2\int_{\R^2}\partial_i^3((u\cdot\nabla) b)\cdot\partial_t\partial_i^3bdx+\alpha\sum_{i=1}^2\int_{\R^2}\partial_i^3\partial_2u\cdot\partial_t\partial_i^3bdx\nonumber\\
&:=I_4+I_5+\frac{A_2}{2\gamma},
\end{align}
where $A_2$ is defined in \eqref{def A12}.

By multiplying \eqref{3.19} by $2\gamma$ and adding the resultant to \eqref{3.18}, we get
\begin{align}\label{mhd3}
&\frac{1}{2}\frac{d}{dt}\sum_{i=1}^2\left(\|\partial_i^3 u\|^2_{L^2}+\|\partial_i^3 b\|^2_{L^2}+2\gamma^2\|\partial_t\partial_i^3b\|^2_{L^2}
+2\gamma\eta\|\nabla\partial_i^3 b\|^2_{L^2}+2\gamma\int_{\R^2}\partial_t\partial_i^3b\cdot\partial_i^3bdx
\right)\nonumber\\
&+\mu\sum_{i=1}^2\|\partial_i^3u_2\|^2_{L^2}+\eta\sum_{i=1}^2\|\nabla \partial_i^3 b\|^2_{L^2}+\gamma\sum_{i=1}^2\|\partial_t\partial_i^3b\|^2_{L^2}\nonumber\\
&=I_1+I_2+I_3+2\gamma I_4+2\gamma  I_5+A_2.
\end{align}

\par For $I_1$, it follows from $\nabla\cdot u=0$,
H\"{o}lder's inequality and Sobolev's inequality that
\begin{align}
I_{1}&=-\sum_{i=1}^2\int_{\R^2}\big((\partial_{i}^3u\cdot\nabla) u+3(\partial_{i}^2u\cdot\nabla)\partial_{i} u+3(\partial_{i}u\cdot\nabla)\partial_{i}^2 u\big)\cdot\partial_{i}^3udx\nonumber\\
&\leq C\|\partial_{i}^2u\|_{L^4}\|\nabla\partial_{i}u\|_{L^4}\|\partial_{i}^3u\|_{L^2}+C\|\nabla u\|_{L^{\infty}}\|\nabla\partial_{i}^2u\|_{L^2}\|\partial_{i}^3u\|_{L^2}\nonumber\\
&\leq C\|u\|_{H^{3}}\|\nabla u\|_{H^2}^2.\label{control I1}
\end{align}

To bound $I_2$, we have
\begin{align}
I_{2}&=
\sum_{i=1}^{2}\int_{\R^2}\Big(3(\partial_{i}b\cdot\nabla)\partial_{i}^2b\cdot\partial_{i}^3u+
3(\partial_{i}^2b\cdot\nabla)\partial_{i}b\cdot\partial_{i}^3u
+(\partial_{i}^3b\cdot\nabla) b\cdot\partial_{i}^3u\Big)dx\nonumber\\&\quad+\sum_{i=1}^{2}
\int_{\R^2}\Big(3(\partial_{i}b\cdot\nabla)\partial_{i}^2u\cdot\partial_{i}^3b+
3(\partial_{i}^2b\cdot\nabla)\partial_{i}u\cdot\partial_{i}^3b
+(\partial_{i}^3b\cdot\nabla) u\cdot\partial_{i}^3b\Big)dx\nonumber\\
&\leq C\left(\|\nabla b\|_{L^\infty}+\|\nabla^2 b\|_{L^\infty}\right)\|\nabla b\|_{H^3}\|u\|_{H^3}+C\|\nabla u\|_{L^\infty}\|\nabla^3b\|_{L^2}^2\nonumber\\
&\leq C\|u\|_{H^3}\|\nabla b\|_{H^3}^2,\label{control I2}
\end{align}
where we have used
\begin{align*}
\int_{\R^2}(b\cdot\nabla)\partial_{i}^3 b \cdot\partial_{i}^3udx+\int_{\R^2}(b\cdot\nabla)\partial_{i}^3 u \cdot\partial_{i}^3bdx=0.
\end{align*}
Similarly, there hold
\begin{align}
I_{3}
\leq &C\|u\|_{H^3}\|\nabla b\|_{H^3}^2\label{control I3}
\end{align}
and
\begin{align}
2\gamma I_5\leq C\|(u,b)\|_{H^3}\left(\|\partial_t b\|_{H^3}^2+\|\nabla b\|_{H^3}^2\right).\label{control I5}
\end{align}

\par For $2\gamma I_4$, we get
 \begin{align}
2\gamma I_4&=
A_{1}+2\gamma\sum_{i=1}^2\int_{\R^2}(\partial_i^3b\cdot\nabla) u\cdot\partial_t\partial_i^3 bdx\nonumber\\
&\quad+6\gamma\sum_{i=1}^2\int_{\R^2}(\partial_i^2b\cdot\nabla)\partial_i u\cdot\partial_t\partial_i^3 bdx+6\gamma\sum_{i=1}^2\int_{\R^2}(\partial_i b\cdot\nabla)\partial_i^2 u\cdot\partial_t\partial_i^3 bdx\nonumber\\ &\leq
A_1+C\sum_{i=1}^2
\|\nabla u\|_{L^{\infty}}\|\partial_i^3 b\|_{L^2}\|\partial_t\partial_i^3b\|_{L^2}\nonumber\\
&\quad
+C\sum_{i=1}^2(\|\partial_i^2 b\|_{L^{\infty}}\|\nabla\partial_iu\|_{L^2}\|\partial_t\partial_i^3b\|_{L^2}+\|\partial_i b\|_{L^{\infty}}\|\nabla\partial_i^2u\|_{L^2}\|\partial_t\partial_i^3b\|_{L^2})\nonumber\\
&\leq A_1+C\|(u,b)\|_{H^3}\big(\|\nabla b\|_{H^3}^2+\|\partial_{t}b\|_{H^3}^2\big).\label{control I4}
\end{align}
\par

Submitting \eqref{control I1}-\eqref{control I4} into \eqref{mhd3} and invoking \eqref{AAA1}, we have
\begin{align}\label{mhd32}
&\frac{1}{2}\frac{d}{dt}\sum_{i=1}^2\left(\|\partial_i^3 u\|^2_{L^2}+\|\partial_i^3 b\|^2_{L^2}+2\gamma^2\|\partial_t\partial_i^3b\|^2_{L^2}
+2\gamma\eta\|\nabla\partial_i^3 b\|^2_{L^2}+2\gamma\int_{\R^2}\partial_t\partial_i^3b\cdot\partial_i^3bdx\right.\nonumber\\
&\left.\qquad\qquad+4\gamma\int_{\R^2}\left((b\cdot\nabla)\partial_i^3 b\cdot\partial_i^3 u+\gamma\partial_{t}\partial_i^3b\cdot (u\cdot\nabla)\partial_i^3 b+\alpha\partial_i^3u\cdot\partial_i^3\partial_2 b\right)dx\right)\nonumber\\
&\quad+\sum_{i=1}^2\bigg(\frac{\mu}{3}\|\partial_i^3u_2\|^2_{L^2}+(\eta-2\gamma\alpha^2-\frac{3\gamma^2\alpha^2\mu}{2})\|\nabla \partial_i^3b\|^2_{L^2}+\gamma\|\partial_{t}\partial_i^3b\|^2_{L^2}\bigg)\nonumber\\
&\leq CV(t)\left(\|\partial_{t} b(t)\|_{H^3}^2+\|\nabla b(t)\|_{H^3}^2+\|u_2(t)\|_{H^3}^2+\|\partial_{2}u(t)\|_{H^2}^2\right),
\end{align}
where we have used \begin{align*}\|\nabla u\|_{H^2}^2=\|\partial_1 u\|_{H^2}^2+\|\partial_2 u\|_{H^2}^2=\| \nabla u_2\|_{H^2}^2+\|\partial_2 u\|_{H^2}^2.\end{align*}

 Integrating \eqref{mhd32} over $[0,t]$, we derive
\begin{align}\label{s1}
&\sum_{i=1}^2\Big(\|\partial_i^3 u\|^2_{L^2}+\|\partial_i^3 b\|^2_{L^2}+2\gamma^2\|\partial_t\partial_i^3b\|^2_{L^2}
+2\gamma\eta\|\nabla\partial_i^3 b\|^2_{L^2}+2\gamma\int_{\R^2}\partial_t\partial_i^3b\cdot\partial_i^3bdx
\nonumber\\
&\quad\quad+4\gamma\int_{\R^2}\big((b\cdot\nabla)\partial_i^3 b\cdot\partial_i^3 u+\gamma\partial_{t}\partial_i^3b\cdot (u\cdot\nabla)\partial_i^3 b+\alpha\partial_i^3u\cdot\partial_i^3\partial_2 b\big)dx\Big)\nonumber\\
&\quad+\sum_{i=1}^2\int_{0}^t\left(\frac{2\mu}{3}\|\partial_i^3u_2(\tau)\|^2_{L^2}+(2\eta-4\gamma\alpha^2-3\gamma^2\alpha^2\mu)\|\nabla\partial_i^3b(\tau)\|^2_{L^2}+2\gamma\|\partial_{\tau}\partial_i^3b(\tau)\|^2_{L^2}\right)d\tau\nonumber\\
&\leq \sum_{i=1}^2\Big(\|\partial_i^3 u_0\|^2_{L^2}+\|\partial_i^3 b_0\|^2_{L^2}+2\gamma^2\|\partial_i^3a_0\|^2_{L^2}
+2\gamma\eta\|\nabla\partial_i^3 b_0\|^2_{L^2}+2\gamma\int_{\R^2}\partial_i^3b_0\cdot\partial_i^3a_0dx
\nonumber\\
&\quad\qquad+4\gamma\int_{\R^2}\big((b_0\cdot\nabla)\partial_i^3 b_0\cdot\partial_i^3 u_0+\gamma\partial_i^3a_0\cdot (u_0\cdot\nabla)\partial_i^3 b_0+\alpha\partial_i^3u_0\cdot\partial_i^3\partial_2 b_0\big)dx\Big)\nonumber\\
&\quad+C\sup_{0\leq\tau\leq t}V(\tau)\int_{0}^t\left(\|\partial_{\tau} b(\tau)\|_{H^3}^2+\|\nabla b(\tau)\|_{H^3}^2+\|u_2(\tau)\|_{H^3}^2+\|\partial_{2}u(\tau)\|_{H^2}^2\right)d\tau.
\end{align}

 Observe that
\begin{align}
&2\gamma\sum_{i=1}^2\int_{\R^2}\partial_t\partial_i^3b\cdot\partial_i^3bdx
\leq \frac{2}{3}\|\nabla^3b\|_{L^2}^2+\frac{3\gamma^2}{2}\|\partial_t\nabla^3 b\|_{L^2}^2,\label{Y1}\\
&2\gamma\sum_{i=1}^2\int_{\R^2}\partial_i^3a_0\cdot\partial_i^3b_0dx
\leq \frac{1}{\sqrt{2}}\|\nabla^3b_0\|_{L^2}^2+\sqrt{2}\gamma^2\|\nabla^3a_0\|_{L^2}^2,\label{Y2}\\
&4\gamma\alpha\sum_{i=1}^2\int_{\R^2}\partial_i^3u\cdot\partial_i^3\partial_2bdx
\leq \frac{2}{3}\|\nabla^3u\|_{L^2}^2+6\gamma^2\alpha^2\|\nabla^4 b\|_{L^2}^2,\label{Y3}\\
&4\gamma\alpha\sum_{i=1}^2\int_{\R^2}\partial_i^3u_0\cdot\partial_i^3\partial_2b_0dx
\leq \frac{1}{\sqrt{2}}\|\nabla^3u_0\|_{L^2}^2+4\sqrt{2}\gamma^2\alpha^2\|\nabla^4 b_0\|_{L^2}^2.\label{Y4}
\end{align}

Inserting \eqref{Y1}-\eqref{Y4} into \eqref{s1}, then we have
\begin{align}\label{energy2}
&\sum_{i=1}^2\left(\|\partial_i^3 u\|^2_{L^2}+\|\partial_i^3 b\|^2_{L^2}+\frac{3\gamma^2}{2}\|\partial_t\partial_i^3b\|^2_{L^2}
+6\gamma(\eta-3\gamma\alpha^2)\|\nabla\partial_i^3 b\|^2_{L^2}
\right)\nonumber\\
&\quad+\sum_{i=1}^2\int_{0}^t\left(2\mu\|\partial_i^3u_2(\tau)\|^2_{L^2}+\left(6\eta-12\gamma\alpha^2-9\gamma^2\alpha^2\mu\right)\|\nabla\partial_i^3 b(\tau)\|^2_{L^2}+6\gamma\|\partial_{\tau}\partial_i^3b(\tau)\|^2_{L^2}\right)d\tau\nonumber\\
&\quad+12\gamma\sum_{i=1}^2\int_{\R^2}\big((b\cdot\nabla)\partial_i^3 b\cdot\partial_i^3 u+\gamma\partial_{t}\partial_i^3b\cdot (u\cdot\nabla)\partial_i^3 b\big)dx\nonumber\\
&\leq C \sum_{i=1}^2\left(\|\partial_i^3 u_0\|^2_{L^2}+\|\partial_i^3 b_0\|^2_{L^2}+2\gamma^2\|\partial_i^3a_0\|^2_{L^2}
+2\gamma\eta\|\partial_i^3\nabla b_0\|^2_{L^2}\right)\nonumber\\
&\quad+12\gamma\sum_{i=1}^2\int_{\R^2}\big((b_0\cdot\nabla)\partial_i^3 b_0\cdot\partial_i^3 u_0+\gamma\partial_i^3a_0\cdot (u_0\cdot\nabla)\partial_i^3 b_0\big)dx\nonumber\\
&\quad+C\sup_{0\leq\tau\leq t}V(\tau)\int_{0}^t\left(\|\partial_{\tau} b(\tau)\|_{H^3}^2+\|\nabla b(\tau)\|_{H^3}^2+\|u_2(\tau)\|_{H^3}^2+\|\partial_{2}u(\tau)\|_{H^2}^2\right)d\tau.
\end{align}
\vskip.1in
 \textbf{Step III. Estimate of $\mathcal{E}_{01}(t)$}
\par If $\alpha$ is taken sufficiently small, then combining \eqref{energy1} with \eqref{energy2} and applying Sobolev's inequality imply
\begin{align*}
&\|(u,b)\|^2_{H^3}+\frac{3\gamma^2}{2}\|\partial_tb\|^2_{H^3}
+6\gamma(\eta-3\gamma\alpha^2)\|\nabla b\|^2_{H^3}\nonumber\\
&\quad+\int_0^t\left(2\mu\|u_2(\tau)\|^2_{H^3}+(6\eta-12\gamma\alpha^2-9\gamma^2\alpha^2\mu)\|\nabla b(\tau)\|^2_{H^3}+6\gamma\|\partial_{\tau}b(\tau)\|^2_{H^3}\right)d\tau\nonumber\\
&\leq C\left(\|(u_0,b_0)\|^2_{H^3}+2\gamma\eta\|\nabla b_0\|^2_{H^3}+2\gamma^2\|a_0\|^2_{H^3}\right)+C\|(u,b)\|_{H^{2}}\left(\|u\|_{H^{3}}^2+\|\nabla b\|_{H^{3}}^2+\|\partial_t b\|_{H^{3}}^2\right)\nonumber\\
&\quad+C\|(u_0,b_0)\|_{H^{2}}\left(\|u_0\|_{H^{3}}^2+\|\nabla b_0\|_{H^{3}}^2+\|a_0\|_{H^{3}}^2\right)\nonumber\\
&\quad+C\sup_{0\leq\tau\leq t}V(\tau)\int_{0}^t\left(\|\partial_{\tau} b(\tau)\|_{H^3}^2+\|\nabla b(\tau)\|_{H^3}^2+\|u_2(\tau)\|_{H^3}^2+\|\partial_{2}u(\tau)\|_{H^2}^2\right)d\tau,
\end{align*}
which immediately leads to the desired inequality \eqref{in3.1}. This completes the proof of Proposition \ref{Pro11}.
\end{proof}

\vskip.1in
\subsection{A Priori estimate on $\mathcal{E}_{02}(t)$}
In this subsection, we establish the dissipation of $u$ in $x_2$ direction and prove the estimate of $\mathcal{E}_{02}(t)$ defined in \eqref{E01}.
\begin{Prop}\label{Pro22}
Let $\mathcal{E}_{01}(t)$ and $\mathcal{E}_{02}(t)$ be defined as \eqref{E00} and \eqref{E01}, respectively. Then we have, for some positive constant $C$
\begin{align}\label{in3.2}
\mathcal{E}_{02}(t)\leq C\left(\mathcal{E}_{01}(0)+\mathcal{E}_{01}(t)+\mathcal{E}_{01}^{\frac{3}{2}}(t)+\mathcal{E}_{02}^{\frac{3}{2}}(t)\right).
\end{align}
\end{Prop}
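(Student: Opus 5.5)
The natural approach is to extract the $x_2$-dissipation of $u$ from the coupling term $\alpha\partial_2 b$ in the velocity equation, by testing the velocity equation against $-\partial_2 b$ (and its derivatives up to order two). Concretely, for $0\le |m|\le 2$ we apply $\partial^m$ to \eqref{type-wave-1}$_1$ and take the $L^2$ inner product with $-\partial^m\partial_2 b$. Then $\alpha\|\partial^m\partial_2 b\|_{L^2}^2$ appears with a sign that is not dissipative for $u$. So this test alone is not the right one.

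The correct test is the symmetric one: take the inner product of $\partial^m$\eqref{type-wave-1}$_2$ with $-\partial^m\partial_2 u$. This produces $\alpha\|\partial^m\partial_2 u\|_{L^2}^2$ on the good side, and the remaining terms become
\begin{align*}
\int \partial^m(\gamma\partial_{tt}b+\partial_t b-\eta\Delta b+(u\cdot\nabla)b-(b\cdot\nabla)u)\cdot\partial^m\partial_2 u\,dx .
\end{align*}
The time-derivative terms are handled by moving $\partial_t$ off $b$.

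We write $\int\partial_t\partial^m b\cdot\partial^m\partial_2 u=\frac{d}{dt}\int\partial^m b\cdot\partial^m\partial_2 u+\int\partial^m\partial_2 b\cdot\partial^m\partial_t u$, and substitute $\partial_t u$ from the velocity equation. This yields $\alpha\|\partial^m\partial_2 b\|^2$, damping terms $\mu\,\partial^m u_2$ paired with $\partial^m\partial_2 b$, pressure terms (which vanish by $\nabla\cdot b=0$), and nonlinear terms.

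The $\gamma\partial_{tt}b$ term is treated analogously: $\gamma\frac{d}{dt}\int\partial_t\partial^m b\cdot\partial^m\partial_2 u+\gamma\int\partial_t\partial^m\partial_2 b\cdot\partial^m\partial_t u$, again substituting $\partial_t u$. This gives $\gamma\alpha\int\partial_t\partial^m\partial_2 b\cdot\partial^m\partial_2 b$, which is a total derivative, and the rest is bounded by $\|\partial_t b\|_{H^3}\|\nabla b\|_{H^3}$ plus $\mu\|u_2\|_{H^3}$ terms.

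The diffusion term $\eta\int\partial^m\Delta b\cdot\partial^m\partial_2u$ is bounded via Young by $\frac{\alpha}{4}\|\partial^m\partial_2u\|^2+C\|\nabla b\|_{H^3}^2$. Here $|m|\le2$ with two derivatives on $b$ gives at most $\nabla^4 b$, which is controlled by $\mathcal E_{01}$, and this is why only $H^2$ of $\partial_2 u$ is targeted.

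Integrating in time, all linear contributions are bounded by $\mathcal E_{01}(t)$. The boundary terms $\int\partial^m b\cdot\partial^m\partial_2 u$ and $\gamma\int\partial_t\partial^m b\cdot\partial^m\partial_2u$ at times $0$ and $t$ are bounded by $\mathcal E_{01}(0)+\mathcal E_{01}(t)$ since $|m|+1\le3$. The constants depend on $\alpha$, which is harmless after dividing.

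The main obstacle is the nonlinear terms $\int\partial^m((u\cdot\nabla)b-(b\cdot\nabla)u)\cdot\partial^m\partial_2u$, together with those produced by substituting $\partial_t u$ (e.g. $\partial^m((u\cdot\nabla)u)\cdot\partial^m\partial_2 b$). For $|m|\le2$ these involve at most $\nabla^3 u$. Every term contains either a factor $\nabla b$ (in $L^2$-in-time via $\int\|\nabla b\|_{H^3}^2$) or a factor $\partial_2 u$/$\partial_1 u=-\partial_2$-related through $\nabla\cdot u=0$ ($\partial_1u_1=-\partial_2u_2$, and $\partial_1 u_2$ controlled by $\|u_2\|_{H^3}$). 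The only genuinely delicate piece is $(b\cdot\nabla)\partial^m u\cdot\partial^m\partial_2u$ at top order. There, Lemma \ref{2.1} lets us split $\nabla u=(\partial_1u,\partial_2u)$ and place one factor $\|\partial_2u\|_{H^2}$ and the other $\|u_2\|_{H^3}$ or $\|\nabla b\|_{H^3}$. Each nonlinear term is thus bounded by $\|(u,b)\|_{H^3}$ times a product of two time-integrable dissipative quantities, giving after integration $C\mathcal E_{01}^{1/2}(t)\bigl(\mathcal E_{01}(t)+\mathcal E_{02}(t)\bigr)\le C(\mathcal E_{01}^{3/2}(t)+\mathcal E_{02}^{3/2}(t))$ by Young. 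Collecting everything yields \eqref{in3.2}.
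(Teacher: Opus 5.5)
Your proposal is correct and follows essentially the same route as the paper: you test $\partial^m$ of the magnetic equation against $\partial^m\partial_2 u$ (the paper uses only $m=0$ and $|m|=2$), move the time derivatives off $b$, substitute $\partial_t u$ from the velocity equation, and bound the nonlinear terms by $\|(u,b,\partial_t b)\|_{H^3}$ times time-integrable dissipative quantities via $\|\partial_1 u\|_{H^2}=\|\nabla u_2\|_{H^2}$. The differences are cosmetic; for example, you treat $\gamma\alpha\int \partial_t\partial^m\partial_2 b\cdot\partial^m\partial_2 b\,dx$ as an exact time derivative, whereas the paper simply bounds it with Young's inequality.
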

\begin{proof}
 We will divide into two steps by using the special structure of the magnetic field equation \eqref{type-wave-1}$_2$ to control the following two terms
$$\int_0^t\|\partial_2u(\tau)\|^2_{L^2}d\tau  \ \text{and} \ \int_0^t\|\nabla^2\partial_2u(\tau)\|^2_{L^2}d\tau,$$   and then get $\mathcal{E}_{02}(t)$ with the help of  $\|\partial_2u\|_{H^2}\sim\|\partial_2u\|_{L^2}+\|\nabla^2\partial_2u\|_{L^2}$.

\par\textbf{Step I. Estimate of $\|\partial_2u\|_{L^2}$}

Multiplying \eqref{type-wave-1}$_2$ with $\partial_2 u$ in $L^2$ and integrating it over $\mathbb{R}^2$ yields
\begin{align*}
\alpha\|\partial_2u\|^2_{L^2}&=\gamma\frac{d}{dt}\int_{\R^2}\partial_2u\cdot\partial_t bdx+\frac{d}{dt}\int_{\R^2}\partial_2u\cdot bdx-\gamma\int_{\R^2}\partial_t \partial_2u\cdot \partial_tbdx-\int_{\R^2}\partial_t\partial_2 u\cdot bdx\nonumber\\
&\quad+\int_{\R^2}(u\cdot\nabla) b\cdot\partial_2u dx-\eta\int_{\R^2}\Delta b\cdot\partial_2u dx-\int_{\R^2}(b\cdot\nabla) u\cdot\partial_2u dx.
\end{align*}

Based on the velocity field equation \eqref{type-wave-1}$_1$, integration  by parts and Lemma \ref{2.1} we have
\begin{align}
-\gamma\int_{\R^2}\partial_t\partial_2 u\cdot \partial_tbdx
&=-\gamma\int_{\R^2}\partial_2((b\cdot\nabla) b+\alpha\partial_2b-(u\cdot\nabla) u-\mu (0,u_2)^{T})\cdot\partial_t bdx\nonumber\\
&\leq \frac{\gamma\alpha}{2}\|\nabla b\|_{L^2}^2+\frac{\gamma\alpha+\gamma\mu}{2}\|\partial_t b\|_{H^1}^2+\frac{\gamma\mu}{2}\|u_2\|_{L^2}^2\nonumber\\
&\quad+ C\| (u,b)\|_{H^{2}}\left(\|\nabla b\|_{H^{1}}^2+\|\nabla u\|_{H^{1}}^2+\|\partial_t b\|_{H^1}^2\right).\label{J1}
\end{align}
Similarly, one has
\begin{align*}
-\int_{\R^2}\partial_t\partial_2 u\cdot bdx&\leq C\left(\|\nabla b\|_{L^2}^2+\|u_2\|_{L^2}^2\right)+ C\| (u,b)\|_{H^{2}}\left(\|\nabla b\|_{H^{1}}^2+\|\nabla u\|_{H^{1}}^2\right).
\end{align*}

Thanks to  Lemma \ref{2.1} again, one gets
\begin{align*}
&\int_{\R^2}(u\cdot\nabla) b\cdot\partial_2u dx-\int_{\R^2}(b\cdot\nabla) u\cdot\partial_2u dx\nonumber \\&\leq C\|\partial_2 u\|_{L^{2}}\|u\|_{L^{2}}^{\frac{1}{2}}\|\partial_1 u\|_{L^{2}}^{\frac{1}{2}}\|\nabla b\|_{L^{2}}^{\frac{1}{2}}\|\nabla\partial_2 b\|_{L^{2}}^{\frac{1}{2}}\nonumber\\
&\quad+ C\|\partial_2 u\|_{L^{2}}\|b\|_{L^{2}}^{\frac{1}{2}}\|\partial_1 b\|_{L^{2}}^{\frac{1}{2}}\|\nabla u\|_{L^{2}}^{\frac{1}{2}}\|\nabla\partial_2 u\|_{L^{2}}^{\frac{1}{2}}\nonumber\\
&\leq C\| (u,b)\|_{H^{2}}\left(\|\nabla b\|_{H^{1}}^2+\|\nabla u\|_{H^{1}}^2\right).
\end{align*}

In summary, we obtain
\begin{align}\label{L2}
\alpha\|\partial_2u\|^2_{L^2}&\leq2\gamma\frac{d}{dt}\int_{\R^2}\partial_2u\cdot\partial_t bdx+2\frac{d}{dt}\int_{\R^2}\partial_2u\cdot bdx+C\left(\|\nabla b\|_{H^1}^2+\|\partial_t b\|_{H^1}^2+\|u_2\|_{L^2}^2\right)\nonumber\\
&\quad+ C\| (u,b)\|_{H^{2}}\left(\|\nabla b\|_{H^{1}}^2+\|\nabla u\|_{H^{1}}^2+\|\partial_tb\|_{H^1}^2\right),
\end{align}
where we have used
\begin{align*}
-\eta\int_{\R^2}\Delta b\cdot\partial_2u dx\leq C\| \nabla b\|_{H^{1}}^2+\frac{\alpha}{2}\|\partial_2 u\|_{L^{2}}^2.
\end{align*}

\vskip.1in
\par\textbf{Step II. Estimate of $\|\nabla^2\partial_2u\|_{L^2}$}

Applying $\nabla^2$ to \eqref{type-wave-1}$_2$ and dotting the resultant with $\nabla^2\partial_2u$ in $L^2$, we get
\begin{align*}
\alpha\|\nabla^2\partial_2u\|^2_{L^2}&=\gamma\frac{d}{dt}\int_{\R^2}\nabla^2\partial_2u\cdot\partial_t\nabla^2 bdx+\frac{d}{dt}\int_{\R^2}\nabla^2\partial_2u\cdot\nabla^2 bdx-\gamma\int_{\R^2}\partial_t\nabla^2\partial_2 u\cdot \partial_t\nabla^2bdx\nonumber\\
&\quad-\int_{\R^2}\partial_t\nabla^2\partial_2 u\cdot \nabla^2bdx+\int_{\R^2}\nabla^2(u\cdot\nabla b)\cdot\nabla^2\partial_2u dx\nonumber\\
&\quad-\eta\int_{\R^2}\nabla^2\Delta b\cdot\nabla^2\partial_2u dx-\int_{\R^2}\nabla^2(b\cdot\nabla u)\cdot\nabla^2\partial_2u dx\nonumber\\
&:=\gamma\frac{d}{dt}\int_{\R^2}\nabla^2\partial_2u\cdot\partial_t\nabla^2 bdx+\frac{d}{dt}\int_{\R^2}\nabla^2\partial_2u\cdot\nabla^2 bdx+\sum_{i=1}^{5}J_{i}.
\end{align*}

To estimate $J_{1}$, according to the velocity equation \eqref{type-wave-1}$_1$, we obtain
\begin{align*}
J_{1}
&=-\gamma\int_{\R^2}\nabla^2\partial_2(b\cdot\nabla b+\alpha\partial_2b-u\cdot\nabla u-\mu (0,u_2)^{T})\cdot \partial_t\nabla^2bdx\nonumber\\
&:=J_{11}+J_{12}+J_{13}+J_{14}.
\end{align*}

With the aid of Lemma \ref{2.1}, we obtain
\begin{align*}
J_{11}
&=-\gamma\int_{\R^2}\partial_t\nabla^2 b\cdot\partial_2((\nabla^2b\cdot\nabla)b+2(\nabla b\cdot\nabla)\nabla b+(b\cdot\nabla)\nabla^2 b)dx\nonumber\\
&\leq C\|\nabla^2 b\|_{L^{2}}^{\frac{1}{2}}\|\nabla^2\partial_1 b\|_{L^{2}}^{\frac{1}{2}}\|\nabla^2 b\|_{L^{2}}^{\frac{1}{2}}\|\nabla^2 \partial_2 b\|_{L^{2}}^{\frac{1}{2}}\|\partial_t\nabla^2 b\|_{L^{2}}\nonumber\\
&\quad+C(\| b\|_{L^{\infty}}+\|\nabla b\|_{L^{\infty}})\|\nabla^2 \partial_tb\|_{L^{2}}\|\nabla^3 b\|_{H^{1}}\nonumber\\
&\leq C\| b\|_{H^{3}}(\|\partial_t\nabla^2 b\|_{L^{2}}^2+\|\nabla^3 b\|_{H^{1}}^2)
\end{align*}
and similarly,
\begin{align*}
J_{13}&=-\gamma\int_{\R^2}\partial_t\nabla^2b\cdot\partial_2\big((\nabla^2u\cdot\nabla)u+2(\nabla u\cdot\nabla)\nabla u\big)dx+\gamma\int_{\R^2}\partial_t\nabla^2\partial_2 b\cdot(u\cdot\nabla)\nabla^2udx\nonumber\\
&\leq C\| u\|_{H^{3}}\left(\|\partial_t\nabla^2  b\|_{H^{1}}^2+\|\nabla^3 u\|_{L^{2}}^2\right).
\end{align*}

Clearly, we have
\begin{align*}
J_{12}+J_{14}
&\leq \gamma\alpha\|\partial_t\nabla^2b\|_{L^2}\|\nabla^4 b\|_{L^2}+\gamma\mu\|\partial_t\nabla^2b\|_{L^2}\|\nabla^3u_2\|_{L^2}.
\end{align*}

Therefore one has
\begin{align}\label{JJJ1}
J_{1}&\leq \frac{\gamma\alpha+\gamma\mu}{2}\|\partial_t\nabla^2 b\|_{L^2}^2+\frac{\gamma\alpha}{2}\|\nabla^4 b\|_{L^2}^2+\frac{\gamma\mu}{2}\|\nabla^3u_2\|_{L^2}^2\nonumber\\
&\quad+ C\| (u,b)\|_{H^{3}}\left(\|\partial_t\nabla^2 b\|_{H^1}^2+\|\nabla^4 b\|_{L^2}^2+\|\nabla^3u\|_{L^2}^2\right).
\end{align}

In view of similar argument to control $J_{1}$, it can be obtained that
\begin{align*}
J_{2}\leq&C\left(\|\nabla b\|_{H^3}^2+\|u_2\|_{H^3}^2\right)+ \| (u,b)\|_{H^{3}}\left(\|\nabla b\|_{H^{3}}^2+\|\nabla u\|_{H^{2}}^2\right).
\end{align*}

It follows from  Lemma \ref{2.1} that
\begin{align}\label{L3}
J_{3}
&=\int_{\R^2}((\nabla^2u\cdot\nabla) b+2(\nabla  u\cdot\nabla)\nabla b+(u\cdot\nabla)\nabla^2 b)\cdot\nabla^2\partial_2 udx\nonumber\\
&\leq C\|\nabla^2\partial_2 u\|_{L^{2}}\|\nabla^2u\|_{L^{2}}^{\frac{1}{2}}\|\nabla^2\partial_1 u\|_{L^{2}}^{\frac{1}{2}}\|\nabla b\|_{L^{2}}^{\frac{1}{2}}\|\nabla\partial_2 b\|_{L^{2}}^{\frac{1}{2}}+C\|u\|_{L^{\infty}}\|\nabla^3 b\|_{L^{2}}\| \nabla^2\partial_2u\|_{L^{2}}\nonumber\\
&\quad+C\|\nabla^2\partial_2 u\|_{L^{2}}\|\nabla^2b\|_{L^{2}}^{\frac{1}{2}}\|\nabla^2\partial_2 b\|_{L^{2}}^{\frac{1}{2}}\|\nabla u\|_{L^{2}}^{\frac{1}{2}}\|\nabla\partial_1 u\|_{L^{2}}^{\frac{1}{2}}\nonumber\\
&\leq C\| (u,b)\|_{H^{3}}\left(\|\nabla^3 b\|_{H^{1}}^2+\|\nabla^2u\|_{H^{1}}^2\right)
\end{align}
and similarly,
\begin{align}\label{L4}
J_{5}
\leq C\| (u,b)\|_{H^{3}}\left(\|\nabla^3 b\|_{H^{2}}^2+\|\nabla^2 u\|_{H^{1}}^2\right).
\end{align}
To sum up, we have
\begin{align}\label{A22}
\alpha\|\nabla^2\partial_2u\|^2_{L^2}&\leq2\gamma\frac{d}{dt}\int_{\R^2}\nabla^2\partial_2u\cdot\partial_t\nabla^2 bdx+2\frac{d}{dt}\int_{\R^2}\nabla^2\partial_2u\cdot\nabla^2bdx\nonumber\\
&\quad+C\left(\|\partial_t b\|_{H^2}^2+\|\nabla b\|_{H^3}^2+\|u_2\|_{H^3}^2\right)\nonumber\\
&\quad+ C\| (u,b)\|_{H^{3}}\left(\|\partial_t b\|_{H^{3}}^2+\|\nabla b\|_{H^{3}}^2+\|\nabla u\|_{H^{2}}^2\right),
\end{align}
where we have used
\begin{align*}
J_{4}&\leq C\| \nabla b\|_{H^{3}}^2+\frac{\alpha}{2}\|\nabla^2\partial_2u\|_{L^{2}}^2.
\end{align*}

\vskip.1in
Consequently, from \eqref{L2} and \eqref{A22}, we infer
\begin{align*}
\alpha\|\partial_2u\|^2_{H^2}&\leq2\frac{d}{dt}\int_{\R^2}\big(\gamma(\partial_2u\cdot\partial_t b+\nabla^2\partial_2u\cdot\partial_t\nabla^2 b)+\partial_2u\cdot b+\nabla^2\partial_2u\cdot\nabla^2b\big)dx\\
&\quad+C\left(\|\partial_t b\|_{H^3}^2+\|\nabla b\|_{H^3}^2+\|u_2\|_{H^3}^2\right)\\
&\quad+ C\| (u,b)\|_{H^{3}}\left(\|\partial_t b\|_{H^{3}}^2+\|\nabla b\|_{H^{3}}^2+\|\nabla u\|_{H^{2}}^2\right).
\end{align*}
Thus
\begin{align*}
&\alpha\int_{0}^{t}\|\partial_2u(\tau)\|^2_{H^2}d\tau\\
&\leq C\big(\|u_0\|_{H^3}^2+\|b_0\|_{H^2}^2+\|a_0\|_{H^2}^2+\|u\|_{H^3}^2+\|b\|_{H^2}^2+\|\partial_tb\|_{H^2}^2\big)\\
&\quad+C\int_{0}^{t}\left(\|\partial_{\tau} b(\tau)\|_{H^3}^2+\|\nabla b(\tau)\|_{H^3}^2+\|u_2(\tau)\|_{H^3}^2\right)d\tau\\
&\quad+C\sup_{0\leq \tau\leq t}\| (u,b)\|_{H^{3}}\int_{0}^{t}\left(\|\partial_{\tau} b(\tau)\|_{H^{3}}^2+\|\nabla b(\tau)\|_{H^{3}}^2+\| u_2(\tau)\|_{H^{3}}^2+\|\partial_2 u(\tau)\|_{H^{2}}^2\right)d\tau,
\end{align*}
where we have used $\|\partial_1 u\|_{H^2}=\|\nabla u_2\|_{H^2}$.

It follows that
$$\mathcal{E}_{02}(t)\leq C\left(\mathcal{E}_{01}(0)+\mathcal{E}_{01}(t)+\mathcal{E}_{01}^{\frac{3}{2}}(t)+\mathcal{E}_{02}^{\frac{3}{2}}(t)\right)$$
and we then  complete the proof of \eqref{in3.2}.
\end{proof}

\subsection{Completeness of the proof of Theorem \ref{Thm1}}
 With \eqref{in3.1} and \eqref{in3.2}, the proof of Theorem \ref{Thm1} will be proved by the bootstrapping argument.
\par
Multiplying \eqref{in3.2} by a suitable constant and then adding the resultant to \eqref{in3.1} yield
\begin{align}\label{bro}
\mathcal{E}_{0}(t)\leq&  C_1(\mathcal{E}_{0}(0)+\mathcal{E}_{0}^{\frac{3}{2}}(0))+C_2\mathcal{E}_{0}^{\frac{3}{2}}(t)
+C_3\mathcal{E}_{0}^{2}(t).
\end{align}
In order to apply the bootstrapping argument, we make the ansatz that
\begin{align*}
\mathcal{E}_0(t)\leq M,
\end{align*}
where
\begin{align*}
M=\min\left\{\frac{1}{(4C_2)^2},\,\,\frac{1}{4C_3} \right\}.
\end{align*}

It then follows from \eqref{intia} and \eqref{bro} that
\begin{align*}
\mathcal{E}_{0}(t)\leq& 2C_1\big(\mathcal{E}_{0}(0)+ \mathcal{E}_{0}^{\frac{3}{2}}(0)\big)\leq 2C_1\epsilon^2.
\end{align*}
If $\epsilon$ satisfies $$\epsilon^2\leq \min\left\{\frac{1}{64C_1C_2^2},\,\,\frac{1}{16C_1C_3} \right\},$$
then
\begin{align*}
 \mathcal{E}_{0}(t)\leq2C_1\epsilon^2\leq\frac{M}{2}.
\end{align*}
The bootstrapping argument then implies \eqref{stability} holds for all $t>0$. i.e.,
$$\mathcal{E}_{0}(t)\leq C\epsilon^2.$$ Then Theorem \ref{Thm1} is completed.
\hfill$\square$

\vskip.3in
\section{Proof of Theorem \ref{Thm2}}
\label{Dec}

 This section is devoted to proving Theorem \ref{Thm2}.  The proof takes advantage of anisotropic Sobolev inequalities and the stability result \eqref{stability}. For the sake of clarity, we divide this section into five subsections. The first subsection establishes the bound for negative Sobolev energy functional $\mathcal{E}_1(t)$. The subsequent three subsections establish the decay rates of $\|\nabla^{k} u(t)\|_{H^{3-k}}$, $\|\nabla^{k} b(t)\|_{H^{4-k}}$ and $\|\partial_t \nabla^{k} b(t)\|_{H^{3-k}}$ for $k=0,1,2$, respectively. The final subsection then extracts the decay rates for $\|\nabla^{3} u_2(t)\|_{L^{2}}$, $\|\nabla^{3} b_2(t)\|_{H^{1}}$ and  $\|\nabla^{3}\partial_t b_2(t)\|_{L^{2}}$.

\subsection{Estimates for $\mathcal{E}_1(t)$}

\par This subsection establishes the bound for negative Sobolev energy functional $\mathcal{E}_1(t)$ which will be used to control the low-order norm of ${E}_0(t)$. Now, we prove the following proposition:
\begin{Prop}\label{Prop11}
Assume that $(u,b)$ is a smooth solution to \eqref{type-wave-1} and
\begin{align*}
\Lambda^{-1} u_0,\Lambda^{-1} b_0,\Lambda^{-1}a_0
\in L^2(\R^2),
\end{align*}
then we have
\begin{align}\label{4.1}
&\mathcal{E}_1(t)\leq C.
\end{align}
\end{Prop}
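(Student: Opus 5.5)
We will repeat the $L^2$-level energy argument of Step I in the proof of Proposition~\ref{Pro11}, now at the level $\Lambda^{-1}$. The dissipation in Proposition~\ref{Pro22} will be copied at this level as well. Since the linear structure is translation and Fourier-multiplier invariant, all linear terms behave exactly as before.

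\textbf{Step 1 (energy identities).} We apply $\Lambda^{-1}$ to \eqref{type-wave-1} and take $L^2$ inner products. For the first identity we pair the $u$-equation with $\Lambda^{-1}u$ and the $b$-equation with $\Lambda^{-1}b$. For the second we pair the $b$-equation with $\partial_t\Lambda^{-1}b$ and multiply the result by $2\gamma$. This produces
\[
\frac12\frac{d}{dt}\Bigl(\|\Lambda^{-1}(u,b)\|_{L^2}^2+2\gamma^2\|\partial_t\Lambda^{-1}b\|_{L^2}^2+2\gamma\eta\|b\|_{L^2}^2+2\gamma\!\int\!\partial_t\Lambda^{-1}b\cdot\Lambda^{-1}b\Bigr)
+\mu\|\Lambda^{-1}u_2\|_{L^2}^2+\eta\|b\|_{L^2}^2+\gamma\|\partial_t\Lambda^{-1}b\|_{L^2}^2
\]
on the left-hand side. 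Two simplifications are used here. First, $\mathbb P(0,u_2)^T$ pairs with $\Lambda^{-1}u$ to give exactly $\mu\|\Lambda^{-1}u_2\|^2$ up to the Riesz structure. Second, the $\alpha\partial_2$ coupling terms cancel between the two equations.

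\textbf{Step 2 (the term $\alpha\partial_2 u$).} The term $2\gamma\alpha\int\Lambda^{-1}\partial_2u\cdot\partial_t\Lambda^{-1}b$ is left over from the second identity. It is handled as in Part III of Step I: we substitute for $\partial_t u$ from the velocity equation, which gives a total derivative plus linear terms. These are absorbed by the dissipation once $\alpha\le\alpha_0$.

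\textbf{Step 3 (the $\mathcal{E}_{12}$ part).} To produce $\mathcal{E}_{12}$, we pair $\Lambda^{-1}$ of the $b$-equation with $\alpha\Lambda^{-1}\partial_2u$, as in Step I of Proposition~\ref{Pro22}. The $\partial_t\partial_2 u$ terms are traded via the velocity equation. The linear remainders are bounded by the dissipation already obtained in Steps 1--2.

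\textbf{Step 4 (nonlinear terms; main obstacle).} The nonlinear terms are $\Lambda^{-1}\mathbb P\nabla\cdot(u\otimes u-b\otimes b)$, $\Lambda^{-1}\nabla\cdot(u\otimes b-b\otimes u)$, and their pairings. Since they are in divergence form, $\|\Lambda^{-1}\nabla\cdot(f\otimes g)\|_{L^2}\le C\|f\|_{L^4}\|g\|_{L^4}$. Hence each nonlinear term is bounded by $\|\Lambda^{-1}(u,b,\partial_tb)\|_{L^2}$ times quadratic quantities. The main obstacle is that the undamped component $u_1$ has no time-integrable $L^2$ norm. For example, $\int_0^t\|u\|_{L^4}^2\,d\tau$ is not directly controlled, since only $u_2$ and $\partial_2u$ are dissipated. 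I would use the anisotropic inequality (Lemma~\ref{2.1}) to write
\[
\|u\|_{L^4}^2\le C\|u\|_{L^2}\|\nabla u\|_{L^2}\le C\|u\|_{L^2}\bigl(\|\nabla u_2\|_{L^2}+\|\partial_2u\|_{L^2}\bigr).
\]
The last factor is time integrable by Theorem~\ref{Thm1}. The other factor is bounded, but only in $L^\infty_t$ and not in $L^2_t$. If this is not enough, a fallback is to split $u_1$ further via $\partial_1u_1=-\partial_2u_2$. Alternatively, one can rely on the $\mathcal{E}_{12}$ and $E_0$ dissipation to gain integrability.

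\textbf{Step 5 (closing).} With these bounds we obtain the inequality
\[
\mathcal{E}_1(t)\le C\mathcal{E}_1(0)+C\sqrt{\mathcal{E}_1(t)}\,\epsilon+C\epsilon^2 .
\]
Young's inequality then yields $\mathcal{E}_1(t)\le C$.
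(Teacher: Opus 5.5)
Your Steps 1--3 follow the paper's structure: the $\Lambda^{-1}$ energy identity with the $2\gamma$-weighted pairing against $\partial_t\Lambda^{-1}b$, and the pairing of $\Lambda^{-1}$ of the $b$-equation with $\Lambda^{-1}\partial_2u$ to generate $\mathcal{E}_{12}$. Your Step 2 differs slightly. You integrate $2\gamma\alpha\int\Lambda^{-1}\partial_2u\cdot\partial_t\Lambda^{-1}b$ by parts in time, whereas the paper applies Young's inequality and lets $\mathcal{E}_{12}$ absorb $2\gamma\alpha^2\int_0^t\|\Lambda^{-1}\partial_2u\|_{L^2}^2\,d\tau$; both work.

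The genuine gap is Step 4, which you flag as the main obstacle and then leave open. Your primary bound $\|u\|_{L^4}^2\le C\|u\|_{L^2}\|\nabla u\|_{L^2}$, with $\|u\|_{L^2}$ used only in $L^\infty_t$, gives $\int_0^t\|u\|_{L^4}^2\,d\tau\lesssim\epsilon\int_0^t\|\nabla u\|_{L^2}\,d\tau$. Since $\|\nabla u\|_{L^2}$ is only in $L^2_t$, this grows like $\sqrt t$. Your fallbacks are listed as alternatives and never carried out. Relying on the $E_0$ dissipation would also be circular, since the paper's bound on $E_0$ uses \eqref{4.1}.

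The missing idea is the identity, valid because $\partial_1u_1=-\partial_2u_2$,
\[
\|u_1\|_{L^2}=\|\Lambda^{-1}\partial_2u\|_{L^2}.
\]
In Fourier variables this follows from $\xi_2^2(|\hat u_1|^2+|\hat u_2|^2)=|\xi|^2|\hat u_1|^2$. It gives
\[
\int_0^t\|u\|_{L^2}^2\,d\tau\le\alpha^{-1}\mathcal{E}_{12}(t)+C\epsilon^2 .
\]
So the time integrability of the undamped component comes from the functional being estimated, not from Theorem~\ref{Thm1}. The paper uses this in $K_1$. There it moves $\partial_1$ onto $\Lambda^{-1}u_1$ in the $u_1u_1$ term, so that the damped quantity $\Lambda^{-1}u_2$ appears. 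The leftover terms $C\epsilon^2\int_0^t\|\Lambda^{-1}\partial_2u\|_{L^2}^2$ and $\delta\int_0^t\|u_1\|_{L^2}^2$ are then absorbed by $\mathcal{E}_{12}$. The same identity makes every $u$-containing term in your $L^4$ scheme work. For example,
\[
\sup_\tau\|\Lambda^{-1}u\|_{L^2}\int_0^t\|u\|_{L^4}^2\,d\tau\le C\epsilon\alpha^{-1/2}\mathcal{E}_1(t)+C\epsilon^2\sqrt{\mathcal{E}_1(t)} .
\]

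Consequently your Step 5 inequality is not what the argument produces. The nonlinear contributions are linear in $\mathcal{E}_1$ with a small coefficient, not of the form $C\epsilon\sqrt{\mathcal{E}_1}$. To close, you must add a small multiple $\kappa_1$ of the Step 3 inequality to the Step 1--2 inequality, because its right side contains $\int_0^t\|b\|_{L^2}^2$ and $\int_0^t\|\partial_\tau\Lambda^{-1}b\|_{L^2}^2$ with $O(1)$ coefficients. You must then take $\epsilon$ small relative to $\alpha$ and $\kappa_1$, so that the terms proportional to $\mathcal{E}_{12}$, with coefficients of order $\epsilon^2/\alpha$, are absorbed into $\frac{\kappa_1}{2}\mathcal{E}_{12}$ on the left. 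Because the bound is linear, no smallness of $\mathcal{E}_1(0)$ is needed.
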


 To obtain \eqref{4.1}, recalling \eqref{E-1}, we establish the desired upper bound for $\mathcal{E}_{11}(t)$ and $\mathcal{E}_{12}(t)$, respectively.
\begin{Lem}
Assume that $(u,b)$ is a smooth solution to \eqref{type-wave-1}, then there exist positive constants $\delta$ and $C$ such that
\begin{align}\label{FF}
&(1-\frac{C\epsilon^2}{4\delta}-C\epsilon^2)\|\Lambda^{-1} u\|^2_{L^2}+(1-C\epsilon^2)\|\Lambda^{-1} b\|^2_{L^2}+\frac{3\gamma^2}{2}\|\partial_t\Lambda^{-1}b\|^2_{L^2}
+2\gamma\eta\|b\|^2_{L^2}\nonumber\\
&+\int_0^t\left(3\mu\|\Lambda^{-1}u_2(\tau)\|^2_{L^2}+(3\eta-C\epsilon^2)\|b(\tau)\|^2_{L^2}+(3\gamma-C\epsilon^2)\|\partial_{\tau}\Lambda^{-1}b(\tau)\|^2_{L^2}\right)d\tau\nonumber\\
&\leq
C\left(\|\Lambda^{-1} u_0\|^2_{L^2}+\|\Lambda^{-1} b_0\|^2_{L^2}+2\gamma^2\|\Lambda^{-1}a_0\|^2_{L^2}
+2\gamma\eta\|b_0\|^2_{L^2}\right)\nonumber\\
&\quad+\left(12\gamma\alpha^2+6\delta+C\epsilon^2\right)\int_0^t\|\Lambda^{-1}\partial_2u(\tau)\|_{L^2}^2d\tau+C\epsilon^4.
\end{align}
\end{Lem}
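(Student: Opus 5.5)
Apply $\Lambda^{-1}$ to \eqref{type-wave-1} and repeat the Step~I ($L^2$) argument of Proposition~\ref{Pro11} one derivative lower. Test the $\Lambda^{-1}$-velocity equation with $\Lambda^{-1}u$ and the $\Lambda^{-1}$-magnetic equation with $\Lambda^{-1}b$. The linear coupling terms $\alpha\int\Lambda^{-1}\partial_2 b\cdot\Lambda^{-1}u+\alpha\int\Lambda^{-1}\partial_2u\cdot\Lambda^{-1}b$ cancel after integration by parts, and the pressure drops out by $\nabla\cdot u=0$. This gives the analogue of \eqref{3.1}, with $\|\Lambda^{-1}u\|^2+\|\Lambda^{-1}b\|^2+2\gamma\int\partial_t\Lambda^{-1}b\cdot\Lambda^{-1}b$ under $\frac{d}{dt}$, dissipation $\mu\|\Lambda^{-1}u_2\|^2+\eta\|b\|^2$, and the bad term $\gamma\|\partial_t\Lambda^{-1}b\|^2$ on the right. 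Next test the $\Lambda^{-1}$-magnetic equation with $\partial_t\Lambda^{-1}b$. This produces $\frac12\frac{d}{dt}(\gamma\|\partial_t\Lambda^{-1}b\|^2+\eta\|b\|^2)+\|\partial_t\Lambda^{-1}b\|^2$ plus the linear term $\alpha\int\Lambda^{-1}\partial_2u\cdot\partial_t\Lambda^{-1}b$ and nonlinear terms. Multiply this identity by $2\gamma$ and add, exactly as in \eqref{mhd33}. The cross term $2\gamma\int\partial_t\Lambda^{-1}b\cdot\Lambda^{-1}b$ is absorbed by Young's inequality as in the derivation of \eqref{energy1}; this accounts for the factors $\frac{3\gamma^2}{2}$ and $3\mu,3\eta,3\gamma$ after multiplying by $3$.

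Unlike Step~I, I will not try to cancel the linear term $2\gamma\alpha\int\Lambda^{-1}\partial_2u\cdot\partial_t\Lambda^{-1}b$. Instead I bound it by Young:
\[
2\gamma\alpha\|\Lambda^{-1}\partial_2u\|\|\partial_t\Lambda^{-1}b\|\le \tfrac{\gamma}{2}\|\partial_t\Lambda^{-1}b\|^2+2\gamma\alpha^2\|\Lambda^{-1}\partial_2u\|^2 .
\]
After multiplying by the factor $3$ and integrating in time, this produces the term $12\gamma\alpha^2\int_0^t\|\Lambda^{-1}\partial_2u\|^2$ on the right of \eqref{FF}, to be closed later by $\mathcal{E}_{12}$. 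The factor $6\delta$ will come from a similar Young splitting with a free parameter $\delta$, applied to a nonlinear term whose worst piece is $\|\Lambda^{-1}\partial_2 u\|$ times a quantity bounded by $\epsilon^2\|\Lambda^{-1}u\|$. This is the source of the coefficient $(1-\frac{C\epsilon^2}{4\delta})$ in front of $\sup\|\Lambda^{-1}u\|^2$.

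For the nonlinear terms, write $(u\cdot\nabla)u=\nabla\cdot(u\otimes u)$, and similarly for the other quadratic terms, so that $\Lambda^{-1}\nabla\cdot$ is a bounded zeroth-order operator. For example,
\[
\Bigl|\int\Lambda^{-1}\mathbb{P}\nabla\cdot(u\otimes u)\cdot\Lambda^{-1}u\Bigr|\le C\|u\otimes u\|_{L^2}\|\Lambda^{-1}u\|_{L^2}.
\]
The factor $\|u\otimes u\|_{L^2}$ is estimated through $\|u\|_{L^\infty}\|u\|_{L^2}$ or through Lemma~\ref{2.1}. Then use Theorem~\ref{Thm1}: $\sup\|(u,b,\partial_tb)\|_{H^3}\le C\epsilon$ together with the time integrability of $\|u_2\|_{H^3}^2$, $\|\nabla b\|_{H^3}^2$, $\|\partial_tb\|_{H^3}^2$ and $\alpha\|\partial_2u\|_{H^2}^2$, each bounded by $C\epsilon^2$. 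Terms quadratic in $b$ pair with $\|b\|_{L^2}$ or $\|\Lambda^{-1}b\|$ and yield contributions of the form $C\epsilon^2(\|b\|^2+\|\partial_t\Lambda^{-1}b\|^2)$, which are absorbed into the left side ($3\eta-C\epsilon^2$, $3\gamma-C\epsilon^2$). Purely time-integrable products such as $\epsilon^2\int_0^t(\|\nabla b\|^2+\|\partial_2u\|^2+\|u_2\|^2)$ are bounded by $C\epsilon^4$.

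The main obstacle is the term $\int\Lambda^{-1}\nabla\cdot(u\otimes u)\cdot\Lambda^{-1}u$, where no component of $u$ dissipates fully. I will split $u\otimes u$ so that each product carries a factor $u_2$ or $\partial_2$ (using $\partial_1u_1=-\partial_2u_2$). The piece containing $\partial_2$ is treated by writing $\Lambda^{-1}\partial_2$ onto $\Lambda^{-1}u$, which gives $\|\Lambda^{-1}\partial_2u\|\,\|u\|_{L^4}^2$. Young with $\delta$ then gives $\delta\|\Lambda^{-1}\partial_2u\|^2+\frac{C}{\delta}\|u\|_{L^4}^4$, and the second term is time-integrable, of order $\epsilon^2$ times dissipation. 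Where this fails, I fall back on $\frac{C\epsilon^2}{4\delta}\sup\|\Lambda^{-1}u\|^2$. Terms from $\partial_t u$, appearing through the $\gamma$-weighted identity, are substituted using the velocity equation as in \eqref{DD}.
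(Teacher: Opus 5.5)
Your proposal is correct and follows the paper's proof essentially step by step. The shared steps are: the $\Lambda^{-1}$-level identities tested against $\Lambda^{-1}u$, $\Lambda^{-1}b$ and $\partial_t\Lambda^{-1}b$ and combined with weight $2\gamma$; Young's inequality, with no cancellation, on $2\gamma\alpha\int\Lambda^{-1}\partial_2u\cdot\partial_t\Lambda^{-1}b$; zeroth-order bounds for $\Lambda^{-1}\nabla\cdot$ together with $\partial_1u_1=-\partial_2u_2$; and a $\delta$-Young split producing $\delta\|\Lambda^{-1}\partial_2u\|_{L^2}^2$.

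The only variation is in that last split, and there are two small points to note.
\begin{itemize}
\item You move $\partial_2$ onto $\Lambda^{-1}u_1$ and use $\|u\|_{L^4}^4\le C\|u\|_{L^2}^2\|\nabla u\|_{L^2}^2$, which puts $C\epsilon^4/\delta$ on the right. The paper instead keeps $\Lambda^{-1}u_1$ undifferentiated, uses $\|u_1\|_{L^2}=\|\Lambda^{-1}\partial_2u\|_{L^2}$, and pays $\frac{C\epsilon^2}{4\delta}\sup_\tau\|\Lambda^{-1}u\|_{L^2}^2$ on the left. Your version implies the stated inequality.
\item The $(1-C\epsilon^2)$ factor on $\|\Lambda^{-1}b\|_{L^2}^2$ comes from $\int\Lambda^{-1}\bigl((b\cdot\nabla)u-(u\cdot\nabla)b\bigr)\cdot\Lambda^{-1}b$. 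These terms pair with the undissipated $\Lambda^{-1}b$, so they need the same sup-in-time-times-integrable treatment, not absorption into $\int\|b\|_{L^2}^2$ as your sketch suggests.
\end{itemize}
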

\begin{proof}
Applying the operator $\Lambda^{-1}$ to \eqref{type-wave-1}$_1$ and \eqref{type-wave-1}$_2$, and multiplying the resulting equations by $\Lambda^{-1}u$ and $\Lambda^{-1}b$, respectively, we have
\begin{align}\label{4.2}
&\frac{1}{2}\frac{d}{dt}\big(\|\Lambda^{-1}u\|^2_{L^2}+\|\Lambda^{-1}b\|^2_{L^2}+2\gamma\int_{\R^2}\partial_t\Lambda^{-1}b\cdot\Lambda^{-1}bdx\big)+\mu\|\Lambda^{-1}u_2\|^2_{L^2}+\eta\|b\|^2_{L^2}-\gamma\|\partial_t\Lambda^{-1}b\|^2_{L^2}\nonumber\\
&=-\int_{\R^2}\Lambda^{-1}(u\cdot\nabla u)\cdot\Lambda^{-1}udx+\int_{\R^2}\Lambda^{-1}(b\cdot\nabla b)\cdot\Lambda^{-1}udx\nonumber\\
&\quad +\int_{\R^2}\Lambda^{-1}(b\cdot\nabla u)\cdot\Lambda^{-1}bdx-\int_{\R^2}\Lambda^{-1}(u\cdot \nabla b)\cdot\Lambda^{-1}bdx\nonumber\\
&:=K_1+K_2+K_3+K_4.
\end{align}
Then, applying the operator $\Lambda^{-1}$ to \eqref{type-wave-1}$_2$ and multiplying the resulting equation by $\partial_t\Lambda^{-1}b$ yield
\begin{align}\label{4.3}
&\frac{1}{2}\frac{d}{dt}(\gamma\|\partial_t\Lambda^{-1}b\|^2_{L^2}+\eta\|b\|^2_{L^2})+\|\partial_t\Lambda^{-1}b\|^2_{L^2}\nonumber\\
&=\int_{\R^2}\Lambda^{-1}(b\cdot\nabla u)\cdot\partial_t\Lambda^{-1}bdx-\int_{\R^2}\Lambda^{-1}(u\cdot\nabla b)\cdot\partial_t\Lambda^{-1}bdx+\alpha\int_{\R^2}\Lambda^{-1}\partial_2u\cdot\partial_t\Lambda^{-1}bdx\nonumber\\
&:=K_5+K_6+K_7.
\end{align}
By multiplying \eqref{4.3} by $2\gamma$ and adding it to \eqref{4.2}, we obtain
\begin{align}\label{4.4}
&\frac{1}{2}\frac{d}{dt}\left(\|\Lambda^{-1} u\|^2_{L^2}+\|\Lambda^{-1} b\|^2_{L^2}+2\gamma^2\|\partial_t\Lambda^{-1}b\|^2_{L^2}
+2\gamma\eta\|b\|^2_{L^2}+2\gamma\int_{\R^2}\partial_t\Lambda^{-1}b\cdot\Lambda^{-1}bdx
\right)\nonumber\\
&+\mu\|\Lambda^{-1}u_2\|^2_{L^2}+\eta\|b\|^2_{L^2}+\gamma\|\partial_t\Lambda^{-1}b\|^2_{L^2}\nonumber\\
&=\sum_{i=1}^{4}K_i+2\gamma\sum_{i=5}^{7}K_i.
\end{align}

To deal with $K_1$, we firstly note that
\begin{align*}
-\int_{\R^2}\Lambda^{-1}\partial_1(u_1 u_1)\cdot\Lambda^{-1}u_1dx
&=-\int_{\R^2}\Lambda^{-1}(u_1 u_1)\cdot\Lambda^{-1}\partial_2u_2dx\\
&=\int_{\R^2}\Lambda^{-1}\partial_2(u_1 u_1)\cdot\Lambda^{-1}u_2dx\\
&\leq
\|\Lambda^{-1}u_2\|_{L^{2}}\|u_1u_1\|_{L^{2}}\\
&\leq\frac{\mu}{8}\|\Lambda^{-1}u_2\|_{L^{2}}^2+C\|u_1\|_{L^{2}}^2\|u_1\|_{H^{2}}^2.
\end{align*}
Thus for any $\delta>0$, by H\"{o}lder's inequality and Young's inequality, one has
\begin{align*}
K_{1}&=-\sum_{i=1}^2\int_{\R^2}\Lambda^{-1}\partial_i(u_i u_1)\cdot\Lambda^{-1}u_1dx-\sum_{i=1}^2\int_{\R^2}\Lambda^{-1}\partial_i(u_i u_2)\cdot\Lambda^{-1}u_2dx\\
&\leq\frac{\mu}{4}\|\Lambda^{-1}u_2\|_{L^{2}}^2+C\|u_1\|_{L^{2}}^2\|u_1\|_{H^{2}}^2+\delta\|u_1\|_{L^{2}}^2+\frac{1}{4\delta}\|\Lambda^{-1}u_1\|_{L^{2}}^2\|u_2\|_{H^{2}}^2\\
&\quad+C\|u\|_{H^{2}}^2\|u_2\|_{L^{2}}^2,
\end{align*}
where we have used that the Riesz operator $\mathcal{R}_i=\partial_i(-\Delta)^{-\frac 12}=\partial_i\Lambda^{-1}$ is $L^2$ bounded.

Based on similar arguments to estimate $K_1$, we get
\begin{align*}
K_{2}&\leq\frac{\mu}{4}\|\Lambda^{-1}u_2\|_{L^{2}}^2+\frac{\eta}{6}\|b_2\|_{L^{2}}^2+C\|b_1\|_{H^2}^2\|\Lambda^{-1}u_1\|_{L^{2}}^2+C\|b\|_{H^{2}}^2\|b\|_{L^{2}}^2.
\end{align*}

By means of H\"{o}lder's inequality and Young's inequality, we obtain
\begin{align*}
K_{3}+K_{4}&=\int_{\R^2}\Lambda^{-1}\nabla\cdot(u\otimes b)\cdot\Lambda^{-1}bdx-\int_{\R^2}\Lambda^{-1}\nabla\cdot(b\otimes u)\cdot\Lambda^{-1}bdx\nonumber\\
&\leq \frac{\eta}{6}\|b\|_{L^{2}}^2+C\|u\|_{H^{2}}^2\|\Lambda^{-1}b\|_{L^{2}}^2,\nonumber\\
2\gamma(K_{5}+K_6)
&\leq
2\gamma(\|\Lambda^{-1}\nabla\cdot(b\otimes u)\|_{L^{2}}+\|\Lambda^{-1}\nabla\cdot(u\otimes b)\|_{L^{2}})\|\partial_t\Lambda^{-1}b\|_{L^{2}}\nonumber\\
&\leq
\frac{\eta}{6}\|b\|_{L^{2}}^2+C\|u\|_{H^{2}}^2\|\partial_t\Lambda^{-1}b\|_{L^{2}}^2,
\end{align*}
and
\begin{align*}
2\gamma K_{7}
&\leq
2\gamma\alpha\|\Lambda^{-1}\partial_2u\|_{L^{2}}\|\partial_t\Lambda^{-1}b\|_{L^{2}}\leq
2\gamma\alpha^2\|\Lambda^{-1}\partial_2u\|_{L^{2}}^2+\frac{\gamma}{2}\|\partial_t\Lambda^{-1}b\|_{L^{2}}^2.
\end{align*}

Putting all estimates of $K_1-K_7$ into \eqref{4.4}, integrating in time and using \eqref{stability}, we have
\begin{align*}
&(1-\frac{C\epsilon^2}{4\delta}-C\epsilon^2)\|\Lambda^{-1} u\|^2_{L^2}+(1-C\epsilon^2)\|\Lambda^{-1} b\|^2_{L^2}\nonumber\\
&+2\gamma^2\|\partial_t\Lambda^{-1}b\|^2_{L^2}
+2\gamma\eta\|b\|^2_{L^2}+2\gamma\int_{\R^2}\partial_t\Lambda^{-1}b\cdot\Lambda^{-1}bdx
\nonumber\\
&+2\int_0^t\left(\frac{\mu}{2}\|\Lambda^{-1}u_2(\tau)\|^2_{L^2}+(\frac{\eta}{2}-C\epsilon^2)\|b(\tau)\|^2_{L^2}+(\frac{\gamma}{2}-C\epsilon^2)\|\partial_{\tau}\Lambda^{-1}b(\tau)\|^2_{L^2}\right)d\tau\nonumber\\
&\leq
\|\Lambda^{-1} u_0\|^2_{L^2}+\|\Lambda^{-1} b_0\|^2_{L^2}+2\gamma^2\|\Lambda^{-1}a_0\|^2_{L^2}
+2\gamma\eta\|b_0\|^2_{L^2}+2\gamma\int_{\R^2}\Lambda^{-1}a_0\cdot\Lambda^{-1}b_0dx\nonumber\\
&\quad+\left(2\gamma\alpha^2+\delta+C\epsilon^2\right)\int_0^t\|\Lambda^{-1}\partial_2u(\tau)\|_{L^2}^2d\tau+C\epsilon^4,
\end{align*}
where we have used the fact $\|u_1\|_{L^2}=\|\Lambda^{-1}\partial_2u\|_{L^2}$.

With the help of H\"{o}lder's inequality and Young's inequality, we have
\begin{align*}
2\gamma\int_{\R^2}\partial_t\Lambda^{-1}b\cdot\Lambda^{-1}bdx
&\leq \frac{2}{3}\|\Lambda^{-1}b\|_{L^2}^2+\frac{3\gamma^2}{2}\|\partial_t\Lambda^{-1} b\|_{L^2}^2,
\end{align*}
and hence we can get the desired inequality \eqref{FF}.
\end{proof}

\begin{Lem}\label{LEM1}
Assume that $(u,b)$ is a smooth
 solution to \eqref{type-wave-1}, then there exist a positive constant $C$ such that
\begin{align}\label{FFF}
&-\gamma(\partial_t\Lambda^{-1}b,\Lambda^{-1}\partial_2u)-(\Lambda^{-1}b,\Lambda^{-1}\partial_2u)+\frac{\alpha}{2}\int_{0}^t\|\Lambda^{-1}\partial_2u(\tau)\|_{L^2}^2d\tau\nonumber\\
& \leq\int_{0}^t\Big((\frac{\gamma(\alpha+\mu)}{2}+C\epsilon^2)\|\partial_{\tau}\Lambda^{-1}b(\tau)\|_{L^2}^2+(\alpha+\frac{\mu+\eta}{2}+\frac{C\epsilon^2}{2\eta}+\frac{C\epsilon^2}{\alpha})\|b(\tau)\|_{L^2}^2
\nonumber\\
&\qquad \ \ \ \ +\frac{\mu}{2}\|\Lambda^{-1}u_2(\tau)\|_{L^2}^2\Big)d\tau+  C\epsilon^2,
\end{align}
where $(\cdot,\cdot)$ denotes the $L^2$-inner product.
\end{Lem}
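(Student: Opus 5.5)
The plan is to copy Step I of Proposition~\ref{Pro22} at the $\Lambda^{-1}$ level. Apply $\Lambda^{-1}$ to \eqref{type-wave-1}$_2$ and take the $L^2$ inner product with $\Lambda^{-1}\partial_2 u$. Isolating the term $\alpha\|\Lambda^{-1}\partial_2u\|_{L^2}^2$ gives
\begin{align*}
\alpha\|\Lambda^{-1}\partial_2u\|_{L^2}^2={}&\frac{d}{dt}\Bigl(\gamma(\partial_t\Lambda^{-1}b,\Lambda^{-1}\partial_2u)+(\Lambda^{-1}b,\Lambda^{-1}\partial_2u)\Bigr)-\gamma(\partial_t\Lambda^{-1}b,\Lambda^{-1}\partial_2\partial_tu)\\
&-(\Lambda^{-1}b,\Lambda^{-1}\partial_2\partial_tu)+\eta(\Lambda b,\Lambda^{-1}\partial_2u)+(\Lambda^{-1}(u\cdot\nabla b-b\cdot\nabla u),\Lambda^{-1}\partial_2u).
\end{align*}
Here I use $-\Lambda^{-1}\Delta b=\Lambda b$. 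Integrating in time over $[0,t]$ then moves the time-derivative term to the left-hand side with a minus sign, as in \eqref{FFF}. Its value at $\tau=0$ is bounded by $C\epsilon^2$ using the hypothesis $\Lambda^{-1}(u_0,b_0,a_0)\in L^2$ together with \eqref{intia}. The constant $C$ in the statement is allowed to absorb these $\Lambda^{-1}$ initial norms.

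The linear terms are handled by moving $\partial_2$ onto the other factor and inserting the velocity equation for $\partial_t u$. The right-hand side is then $\partial_tu=-\mu(0,u_2)^T+\alpha\partial_2b+\nabla\cdot(b\otimes b-u\otimes u)-\nabla p$. The pressure drops out because $\Lambda^{-1}b$ and $\partial_t\Lambda^{-1}b$ are divergence free.
\begin{itemize}
\item In $-\gamma(\partial_t\Lambda^{-1}b,\Lambda^{-1}\partial_2\partial_tu)$, the linear contributions are bounded by $\gamma\alpha\|\partial_t\Lambda^{-1}b\|_{L^2}\|b\|_{L^2}$ and $\gamma\mu\|\partial_t\Lambda^{-1}b\|_{L^2}\|u_2\|_{L^2}$, since $\Lambda^{-1}\partial_2$ is bounded on $L^2$. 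For the $\mu$-term I instead keep $\partial_2$ on $\partial_t\Lambda^{-1}b$, which yields $\|\Lambda^{-1}u_2\|_{L^2}$ paired against $\|\partial_t b\|$-type quantities. Young's inequality then gives the weights $\tfrac{\gamma(\alpha+\mu)}{2}$, $\tfrac{\alpha}{2}$, $\tfrac{\mu}{2}$ appearing in \eqref{FFF}.
\item In $-(\Lambda^{-1}b,\Lambda^{-1}\partial_2\partial_tu)$, the term $\alpha(\Lambda^{-1}\partial_2b,\Lambda^{-1}\partial_2 b)$ is bounded by $\alpha\|b\|^2_{L^2}$. The $\mu$-term gives $\mu\|b\|_{L^2}\|\Lambda^{-1}u_2\|_{L^2}$.
\item The diffusion term satisfies $\eta|(\Lambda b,\Lambda^{-1}\partial_2u)|=\eta|(b,\partial_2 u)|$. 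I plan to write this as $\eta|(\partial_2 b,u)|$ and then reduce it to $\tfrac{\eta}{2}\|b\|_{L^2}^2$ plus a small multiple of $\alpha\|\Lambda^{-1}\partial_2u\|^2_{L^2}$. Equivalently, I absorb $\eta\|b\|_{L^2}\|\Lambda^{-1}\partial_2 u\|_{L^2}\cdot$(a Riesz factor) into $\tfrac{\alpha}{2}\|\Lambda^{-1}\partial_2u\|^2_{L^2}$, which is why only $\tfrac{\alpha}{2}$ survives on the left-hand side.
\end{itemize}

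For the nonlinear terms I write $u\cdot\nabla b-b\cdot\nabla u=\nabla\cdot(b\otimes u-u\otimes b)$, so $\Lambda^{-1}\nabla\cdot$ is bounded on $L^2$. These terms are then controlled by $\|u\|_{L^\infty}\|b\|_{L^2}\|\Lambda^{-1}\partial_2u\|_{L^2}$, and Young's inequality gives $\tfrac{C\epsilon^2}{\alpha}\|b\|^2_{L^2}$ plus a small absorbable part. The analogous quadratic pieces coming from $\partial_tu$ are $(\partial_t\Lambda^{-1}b,\Lambda^{-1}\partial_2\nabla\cdot(b\otimes b-u\otimes u))$ and its counterpart with $\Lambda^{-1}b$. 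I bound them by $\|\partial_t\Lambda^{-1}b\|_{L^2}\|(u,b)\|_{H^2}\|(u,b)\|_{L^2}$ and use \eqref{stability}, so that $\sup\|(u,b)\|_{H^3}\le C\epsilon$. This produces terms of the form $C\epsilon^2\|\partial_t\Lambda^{-1}b\|^2$ and $C\epsilon^2\|b\|^2/\eta$, together with time integrals of $\|u\|_{H^2}^2\|u\|_{L^2}^2$, which are $\le C\epsilon^4\le C\epsilon^2$ by Theorem~\ref{Thm1}.

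The main obstacle is the $u\otimes u$ contribution paired with $\Lambda^{-1}b$, because $u_1$ has no dissipation. I will write $\|u_1\|_{L^2}=\|\Lambda^{-1}\partial_2u\|_{L^2}$. This lets the $u_1$ factors be either absorbed into the left-hand side using the smallness $C\epsilon^2$, or put in $L^\infty$ and paired with $u_2$, whose $L^2$ dissipation is integrable by \eqref{stability}. Finally I collect all the terms and integrate in time to obtain \eqref{FFF}.
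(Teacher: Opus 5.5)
Your skeleton matches the paper's: apply $\Lambda^{-1}$ to the $b$-equation, pair with $\Lambda^{-1}\partial_2u$, substitute the velocity equation for $\partial_tu$, and use $\|u_1\|_{L^2}=\|\Lambda^{-1}\partial_2u\|_{L^2}$. However, several individual bounds are false, and you misidentify where the terms in \eqref{FFF} come from.

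The clearest failure is the diffusion term. Reducing $\eta|(\Lambda b,\Lambda^{-1}\partial_2u)|=\eta|(\partial_2b,u)|$ to $\frac{\eta}{2}\|b\|_{L^2}^2$ plus a small multiple of $\alpha\|\Lambda^{-1}\partial_2u\|_{L^2}^2$ is off by one derivative. After rescaling $b\mapsto sb$, $u\mapsto u/s$, it would give $|(b,\partial_2u)|\le C\|b\|_{L^2}\|\Lambda^{-1}\partial_2u\|_{L^2}$. This fails for $b=N^{-1}\partial_2u$ with $\hat u$ supported where $|\xi_2|\sim|\xi|\sim N$, $N\to\infty$, and no order-zero Riesz factor can repair it. Your first bullet has the same defect: $\|\Lambda^{-1}\partial_2\partial_2b\|_{L^2}$ is bounded by $\|\partial_2b\|_{L^2}$, not by $\|b\|_{L^2}$.

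The missing principle is this. Every term carrying one derivative beyond the $\dot H^{-1}$ level must be placed on $\|\nabla b\|_{L^2}$, $\|\partial_2b\|_{L^2}$, $\|u_2\|_{L^2}$ or $\|\nabla u\|_{L^2}$, where $\|\nabla u\|_{L^2}^2=\|\nabla u_2\|_{L^2}^2+\|\partial_2u\|_{L^2}^2$. The time integrals of these are $\le C\epsilon^2$ by \eqref{stability}. For instance, $\eta|(\Lambda b,\Lambda^{-1}\partial_2u)|\le\frac{3\eta^2}{2\alpha}\|\nabla b\|_{L^2}^2+\frac{\alpha}{6}\|\Lambda^{-1}\partial_2u\|_{L^2}^2$. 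Likewise, the $\alpha$- and $\mu$-terms paired with $\partial_t\Lambda^{-1}b$ give $\frac{\gamma(\alpha+\mu)}{2}\|\partial_t\Lambda^{-1}b\|_{L^2}^2+\frac{\gamma\alpha}{2}\|\partial_2b\|_{L^2}^2+\frac{\gamma\mu}{2}\|u_2\|_{L^2}^2$. This is exactly where the additive $C\epsilon^2$ in \eqref{FFF} comes from.

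The $\frac{\eta}{2}\|b\|_{L^2}^2$ is not produced by diffusion. It comes from Young's inequality on the $b\otimes b$ piece of the $\Lambda^{-1}b$ pairing: $\|b\|_{L^2}^2\|b\|_{L^\infty}\le\frac{\eta}{2}\|b\|_{L^2}^2+\frac{C}{2\eta}\|b\|_{L^2}^2\|b\|_{H^2}^2$. The drop from $\alpha$ to $\frac{\alpha}{2}$ on the left consists of three absorptions of $\frac{\alpha}{6}\|\Lambda^{-1}\partial_2u\|_{L^2}^2$: from the diffusion term, from the $u\otimes u$ piece of the $\Lambda^{-1}b$ pairing, and from $u\cdot\nabla b-b\cdot\nabla u$.

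Second, consider the quadratic parts of $\partial_tu$ paired with $\partial_t\Lambda^{-1}b$. Here the operator $\Lambda^{-1}\partial_2\nabla\cdot$ has order one. Your bound with $\|(u,b)\|_{L^2}$ as the undifferentiated factor leads you to assert $\int_0^t\|u\|_{H^2}^2\|u\|_{L^2}^2\,d\tau\le C\epsilon^4$ by Theorem~\ref{Thm1}. That is not justified. \eqref{stability} bounds $u_1$ only in $L^\infty_t$, never in $L^2_t$; supplying that is precisely the role of $\mathcal{E}_{12}$. So Theorem~\ref{Thm1} only gives a bound growing like $\epsilon^4 t$.

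The paper instead uses $\|u\cdot\nabla u\|_{L^2}\le\|u\|_{L^\infty}\|\nabla u\|_{L^2}$, and likewise for $b$. This gives $C\epsilon^2\|\partial_t\Lambda^{-1}b\|_{L^2}^2+C\|\nabla(u,b)\|_{L^2}^2$, with the last term integrable.

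The absorption in your final paragraph is the right device for the $u\otimes u$ term paired with $\Lambda^{-1}b$; it uses $\|u\|_{L^2}^2=\|\Lambda^{-1}\partial_2u\|_{L^2}^2+\|u_2\|_{L^2}^2$ with a small coefficient, and is what the paper does there. But it contradicts, rather than supports, your integrability claim above. Every such absorption must also be charged to the $\frac{\alpha}{2}$ budget.

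A minor point: the $\tau=0$ boundary term is bounded by $(\gamma\|\Lambda^{-1}a_0\|_{L^2}+\|\Lambda^{-1}b_0\|_{L^2})\|u_0\|_{L^2}$. This is $O(\epsilon)$ rather than $O(\epsilon^2)$ unless the $\Lambda^{-1}$-norms are small. The paper passes over this too, and it is harmless for Proposition~\ref{Prop11}.
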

\begin{proof}
Applying the operator $\Lambda^{-1}$ to \eqref{type-wave-1}$_2$ and multiplying the resulting equation by $\Lambda^{-1}\partial_2 u$ yield
 \begin{align}\label{F22}
&-\gamma\frac{d}{dt}(\partial_t\Lambda^{-1}b,\Lambda^{-1}\partial_2u)-\frac{d}{dt}(\Lambda^{-1}b,\Lambda^{-1}\partial_2u)+\alpha\|\Lambda^{-1}\partial_2u\|_{L^2}^2\nonumber\\
&=-\gamma(\partial_t\Lambda^{-1}b,\partial_t\Lambda^{-1}\partial_2u)-(\Lambda^{-1}b,\partial_t\Lambda^{-1}\partial_2u)-\eta(\Delta\Lambda^{-1} b,\Lambda^{-1}\partial_2u)\nonumber\\
&\quad+(\Lambda^{-1}(u\cdot\nabla b),\Lambda^{-1}\partial_2u)-(\Lambda^{-1}(b\cdot\nabla u),\Lambda^{-1}\partial_2u)\nonumber\\
&:=K_8+K_9+K_{10}+K_{11}+K_{12}.
\end{align}

\par For $K_8$, using the velocity equation \eqref{type-wave-1}$_1$, we have
\begin{align*}
K_8
&=-\gamma\int_{\R^2}\partial_t \Lambda^{-1}b\cdot\Lambda^{-1}\partial_2 (b\cdot\nabla b+\alpha\partial_2b-u\cdot\nabla u-\mu (0,u_2)^{T})dx\\
&\leq\frac{\gamma\alpha}{2}\|\partial_2b\|_{L^2}^2+\frac{\gamma(\alpha+\mu)}{2}\|\partial_t\Lambda^{-1}b\|_{L^2}^2+
\frac{\gamma\mu}{2}\|u_2\|_{L^2}^2\nonumber\\
&\quad+C\|\partial_t\Lambda^{-1} b\|_{L^{2}}^2(\| u\|_{H^{2}}^2+\| b\|_{H^{2}}^2)+C(\|\nabla u\|_{L^{2}}^2+\|\nabla b\|_{L^{2}}^2),
\end{align*}
where we have used  H\"{o}lder's inequality, Young's inequality and $H^2\hookrightarrow L^\infty$.

Similarly, it follows from H\"{o}lder's inequality, Young's inequality and $H^2\hookrightarrow L^\infty$ that
\begin{align*}
K_9
&=-\int_{\R^2} \Lambda^{-1}b\cdot \Lambda^{-1}\partial_2(b\cdot\nabla b+\alpha\partial_2b-u\cdot\nabla u-\mu (0,u_2)^{T})dx
\\&=\int_{\R^2} \Lambda^{-1}\partial_2 b\cdot\Lambda^{-1}(b\cdot\nabla b+\alpha\partial_2b-u\cdot\nabla u-\mu (0,u_2)^{T})dx\\
&\leq (\alpha+\frac{\mu+\eta}{2})\|b\|_{L^2}^2+
\frac{\mu}{2}\|\Lambda^{-1}u_2\|_{L^2}^2+\frac{1}{2\eta}\| b\|_{L^{2}}^2\| b\|_{H^{2}}^2\\
&\quad+\frac{3}{2\alpha}\| b\|_{L^{2}}^2\|u\|_{H^{2}}^2+\frac{\alpha}{6}(\|\Lambda^{-1}\partial_2 u\|_{L^{2}}^2+\| u_2\|_{L^{2}}^2),
\end{align*}
where we have used $\Lambda^{-1}(b\cdot\nabla b)=\sum\limits_{i=1}^2\Lambda^{-1}\partial_i(b_ib)$,  $\Lambda^{-1}(u\cdot\nabla u)=\sum\limits_{i=1}^2\Lambda^{-1}\partial_i(u_iu)$ and
\begin{align*}
 \|u\|_{L^{2}}^2 = \|u_1\|_{L^{2}}^2+ \|u_2\|_{L^{2}}^2=\|\Lambda^{-1}\partial_2 u\|_{L^{2}}^2+\| u_2\|_{L^{2}}^2.
\end{align*}

Moreover, there hold
\begin{align*}
&K_{10}
\leq\frac{3\eta^2}{2\alpha}\|\nabla b\|_{L^{2}}^2+\frac{\alpha}{6}\|\Lambda^{-1}\partial_2 u\|_{L^{2}}^2,
\end{align*}
and
\begin{align*}
K_{11}+K_{12}
\leq \frac{\alpha}{6}\|\Lambda^{-1}\partial_2 u\|_{L^{2}}^2+\frac{3}{2\alpha}\|u\|_{H^{2}}^2\| b\|_{L^{2}}^2.
\end{align*}

Finally, by integrating \eqref{F22} with respect to time, together with the estimates for $K_8-K_{12}$ and \eqref{stability}, we arrive at \eqref{FFF}.
\end{proof}

 Combining Lemma \ref{FF} and Lemma \ref{LEM1}, we now complete the proof of  Proposition \ref{Prop11}.

 \textit{Proof of Proposition} \ref{Prop11}. According to Lemma \ref{FF} and \ref{LEM1}, for a sufficiently small $\kappa_1$, a direct calculation of \eqref{FF}+$\kappa_1\cdot$\eqref{FFF} yields
 \begin{align*}
&(1-\frac{C\epsilon^2}{4\delta}-C\epsilon^2)\|\Lambda^{-1} u\|^2_{L^2}+(1-C\epsilon^2)\|\Lambda^{-1} b\|^2_{L^2}\nonumber\\
&+\frac{3\gamma^2}{2}\|\partial_t\Lambda^{-1}b\|^2_{L^2}
+2\gamma\eta\|b\|^2_{L^2}-\gamma\kappa_1(\partial_{t}\Lambda^{-1}b,\Lambda^{-1}\partial_2u)-\kappa_1(\Lambda^{-1}b,\Lambda^{-1}\partial_2u)\nonumber\\
&+\int_0^t\Big((3\mu-\frac{\kappa_1\mu}{2})\|\Lambda^{-1}u_2(\tau)\|^2_{L^2}
+\big(3\eta-C\epsilon^2-\kappa_1(\alpha+\frac{\mu+\eta}{2}+\frac{C\epsilon^2}{2\eta}+\frac{C\epsilon^2}{\alpha})\big)\|b(\tau)\|^2_{L^2}\nonumber\\
&\ \ \ \
+\big((3\gamma-C\epsilon^2)-\kappa_1(\frac{\gamma(\alpha+\mu)}{2}+C\epsilon^2)\big)\|\partial_{\tau}\Lambda^{-1}b(\tau)\|^2_{L^2}
\nonumber\\
&\ \ \ \ +(\frac{\alpha\kappa_1}{2}-(12\gamma\alpha^2+6\delta+C\epsilon^2))\|\Lambda^{-1}\partial_2u(\tau)\|_{L^2}^2\Big)d\tau\nonumber\\
&\leq
C\left(\|\Lambda^{-1} u_0\|^2_{L^2}+\|\Lambda^{-1} b_0\|^2_{L^2}+2\gamma^2\|\Lambda^{-1}a_0\|^2_{L^2}
+2\gamma\eta\|b_0\|^2_{L^2}\right)+C(\epsilon^4+\epsilon^2).
\end{align*}
Since $\kappa_1,\alpha,\epsilon$ and $\delta$ are mutually independent and can be taken arbitrarily small, we then obtain \eqref{4.1}.
\hfill$\square$

\subsection{The decay rates for $\| u(t)\|_{H^{3}},\|b(t)\|_{H^{4}}$ and $
		\|\partial_t b(t)\|_{H^{3}}$}
To prove the decay rates for $\| u(t)\|_{H^{3}},\|b(t)\|_{H^{4}}$ and $
		\|\partial_t b(t)\|_{H^{3}}$, the following proposition establishes the desired bound for ${E}_0(t)$.
\begin{Prop}\label{Prop1}
For some constant $C>0$, it holds that
\begin{align}\label{key2}
		E_0(t)\leq C.
\end{align}
\end{Prop}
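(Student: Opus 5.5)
The bound \eqref{key2} follows from a time-weighted version of the energy argument behind Theorem~\ref{Thm1}. The weight $(1+t)$ is handled by interpolating against the negative Sobolev bound of Proposition~\ref{Prop11}. First I would redo the computations of Section~\ref{Sta} with one extra derivative on $b$. The reason is that $E_0$ contains $\|b\|_{H^4}$, while $\mathcal{E}_{01}$ controls only $\|b\|_{H^3}+\|\nabla b\|_{H^3}$; these are in fact equivalent, so no new regularity is needed. The output is a differential inequality of the form
\begin{align*}
\frac{d}{dt}\mathcal{F}(t)+c_0\mathcal{D}(t)\leq C V(t)\mathcal{D}(t),
\end{align*}
which I take from \eqref{mhd333}, \eqref{mhd32} and the $\mathcal{E}_{02}$ estimate in Proposition~\ref{Pro22}, all combined with a small multiplier $\kappa$. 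Here $\mathcal{F}\sim\|u\|_{H^3}^2+\|b\|_{H^4}^2+\gamma^2\|\partial_tb\|_{H^3}^2$, provided $\alpha$ and $\epsilon$ are small. The dissipation is
\begin{align*}
\mathcal{D}=\mu\|u_2\|_{H^3}^2+\eta\|\nabla b\|_{H^3}^2+\gamma\|\partial_tb\|_{H^3}^2+\alpha\|\partial_2u\|_{H^2}^2 .
\end{align*}
The nonlinear term $A_1$ is absorbed through Proposition~\ref{A1b}, exactly as before. By Theorem~\ref{Thm1} we have $V\leq C\epsilon$, so the right-hand side is absorbed and we get $\frac{d}{dt}\mathcal{F}+c\mathcal{D}\le0$.

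Next I multiply by $(1+t)$ and integrate in time:
\begin{align*}
(1+t)\mathcal{F}(t)+c\int_0^t(1+\tau)\mathcal{D}\,d\tau\leq \mathcal{F}(0)+\int_0^t\mathcal{F}(\tau)\,d\tau .
\end{align*}
The whole difficulty is now to bound $\int_0^t\mathcal{F}\,d\tau$. Most of $\mathcal{F}$ is already time-integrable by Theorem~\ref{Thm1}: the dissipation controls $\|u_2\|_{H^3}$, $\|\partial_tb\|_{H^3}$, $\|\nabla b\|_{H^3}$ and $\|\partial_2 u\|_{H^2}$. The relation $\|\partial_1 u\|_{H^2}=\|\nabla u_2\|_{H^2}$ then supplies $\int_0^t\|\nabla u\|_{H^2}^2\,d\tau$. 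What remains are the lowest-order quantities $\int_0^t\|u_1\|_{L^2}^2\,d\tau$ and $\int_0^t\|b\|_{L^2}^2\,d\tau$. These are exactly what $\mathcal{E}_1$ provides. Indeed $\|u_1\|_{L^2}=\|\Lambda^{-1}\partial_2u\|_{L^2}$ is controlled by $\mathcal{E}_{12}$, and $\int_0^t\|b\|_{L^2}^2\,d\tau$ is contained in $\mathcal{E}_{11}$. Proposition~\ref{Prop11} therefore gives $\int_0^t\mathcal{F}\,d\tau\le C$, and hence $E_0(t)\le C$.

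The main obstacle is closing the first step at the $b\in H^4$ level with the weight carried along. The weight produces the extra terms $\int_0^t\mathcal{F}\,d\tau$, which include the cross terms in the modified energy, such as $\int(b\cdot\nabla)\partial_i^3b\cdot\partial_i^3u$ and $\alpha\int\partial_i^3u\cdot\partial_i^3\partial_2b$. Each of these must be bounded by dissipation or by $\epsilon\,\mathcal{F}$. I would check that every cross term contains a factor already lying in $\mathcal{D}$; for example, $\partial_i^3\partial_2 b$ belongs to $\nabla b\in H^3$, so Young's inequality suffices. I would also verify that the cubic terms in the weighted estimate are bounded by $C\epsilon E_0(t)$ and can be absorbed. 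If the $H^4$ norm of $b$ fails to close directly, I would fall back on the extra equation obtained by testing $\nabla^4$ of \eqref{type-wave-1}$_2$ with $\nabla^4 b$. That estimate is parabolic in $b$, and its forcing term $\alpha\nabla^4\partial_2u$ can be integrated by parts against $\nabla^4\partial_2 b$.
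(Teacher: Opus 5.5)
Your argument is correct and is essentially the paper's proof. The paper likewise weights the combined $L^2$/$\dot H^3$ inequality coming from \eqref{mhd333} and \eqref{mhd32} (with Proposition~\ref{A1b} absorbing $A_1$) by $(1+t)$ in Lemma~\ref{key3}, adds a $\kappa_2$ multiple of the weighted $(\partial_tb,\partial_2u)_{H^2}$ estimate of Lemma~\ref{key4}, and bounds the resulting time integral of the unweighted energy, cross terms included, via \eqref{stability} and \eqref{4.1}, which is exactly your accounting.

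Your $H^4$ fallback is unnecessary, since $\|b\|_{H^4}\sim\|b\|_{L^2}+\|\nabla b\|_{H^3}$ is already in the energy. It would also not close as described: the $b$-equation is of damped-wave type rather than parabolic, so testing with $\nabla^4 b$ produces $-\gamma\|\partial_t\nabla^4 b\|_{L^2}^2$ and $\nabla^4 u$, both beyond the available regularity.
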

 To prove \eqref{key2} we firstly show the following two lemmas. The first lemma focuses on bounding the time-weighted energy $(1+t)\|(u,b,\nabla b,\partial_tb)\|^2_{H^3}$ while the second lemma handles the inner product $(1+t)(\partial_tb,\partial_2u)_{H^2}$ to generate the time-weighted dissipation $(1+t)\|\partial_2u\|_{H^2}^2$.
 \begin{Lem}\label{key3}
Assume that $(u,b)$ is a smooth solution to \eqref{type-wave-1}, then we have
\begin{align}\label{inenergy11}
&\frac{1}{2}\frac{d}{dt}(1+t)\Big(\|u\|^2_{H^3}+\| b\|^2_{H^3}+2\gamma^2\|\partial_tb\|^2_{H^3}
+2\gamma\eta\|\nabla b\|^2_{H^3}+2\gamma(\partial_tb,b)_{H^3}\nonumber\\
&\qquad\qquad\quad+4\gamma\int_{\R^2}\big((b\cdot\nabla)\nabla^3 b\cdot\nabla^3 u+\gamma\nabla^3\partial_{t}b\cdot (u\cdot\nabla)\nabla^3 b+\alpha\nabla^3u\cdot\nabla^3\partial_2 b\big)dx\nonumber\\
&\qquad\qquad\quad +4\gamma\int_{\R^2}\big((b\cdot\nabla) b\cdot u+\gamma\partial_{t}b\cdot (u\cdot\nabla) b+\alpha u\cdot\partial_2 b\big)dx\Big)\nonumber\\
&+(1+t)\big((\frac{\mu}{3}-C(\epsilon+\epsilon^2))\|u_2\|^2_{H^3}+(\eta-2\gamma\alpha^2-\frac{3\gamma^2\alpha^2\mu}{2}-C(\epsilon+\epsilon^2))\|\nabla b\|^2_{H^3}\nonumber\\
&\qquad \ \ \ \ \ \ +(\gamma-C(\epsilon+\epsilon^2))\|\partial_{t}b\|^2_{H^3}-C(\epsilon+\epsilon^2)\|\partial_{2}u\|_{H^2}^2\big) \nonumber\\&\leq\frac{1}{2}\left(\|u\|^2_{H^3}+\| b\|^2_{H^3}+2\gamma^2\|\partial_tb\|^2_{H^3}
+2\gamma\eta\|\nabla b\|^2_{H^3}+2\gamma(\partial_t b,b)_{H^3}\right)+\frac{C}{\epsilon}\|u\|_{H^3}^2.
\end{align}
\end{Lem}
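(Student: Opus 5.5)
The plan is to reuse the pointwise-in-time differential inequalities obtained in Step I and Step II of the proof of Proposition \ref{Pro11}, namely \eqref{mhd333} and \eqref{mhd32}, before they were integrated in time. We multiply them by the weight $(1+t)$. Writing $\mathcal{F}(t)$ for the full energy inside the time derivative on the left of \eqref{inenergy11}, the product rule gives
\[
\frac{1}{2}\frac{d}{dt}\big((1+t)\mathcal{F}\big)=(1+t)\frac{1}{2}\frac{d}{dt}\mathcal{F}+\frac{1}{2}\mathcal{F}.
\]
Thus the inequality follows from three ingredients: the unweighted differential inequality multiplied by $(1+t)$, the nonlinear right-hand sides $CV(t)(\cdots)$ absorbed into the dissipation, and the term $\frac12\mathcal{F}$ moved to the right-hand side.

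\textbf{Step 1.} Add \eqref{mhd333} and the sum over $i$ of \eqref{mhd32}. Replace the directional norms $\sum_i\|\partial_i^3\cdot\|$ by $\|\nabla^3\cdot\|$, which is legitimate up to equivalence of norms; alternatively, redo the estimates with $\nabla^3$ in place of $\partial_i^3$, since Proposition \ref{A1b} transfers verbatim. This produces the dissipation
\[
\frac{\mu}{3}\|u_2\|_{H^3}^2+\Bigl(\eta-2\gamma\alpha^2-\tfrac{3\gamma^2\alpha^2\mu}{2}\Bigr)\|\nabla b\|_{H^3}^2+\gamma\|\partial_t b\|_{H^3}^2.
\]
The right-hand side is
\[
CV(t)\bigl(\|\partial_tb\|_{H^3}^2+\|\nabla b\|_{H^3}^2+\|u_2\|_{H^3}^2+\|\partial_2u\|_{H^2}^2+\|\nabla u\|_{L^2}^2\bigr).
\]
By Theorem \ref{Thm1} we have $V(t)\le C(\epsilon+\epsilon^2)$. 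We also use $\|\nabla u\|_{L^2}\le\|u_2\|_{H^1}+\|\partial_2u\|_{L^2}$, which holds because $\partial_1u_1=-\partial_2u_2$. Together these give exactly the $-C(\epsilon+\epsilon^2)$ corrections on each dissipation coefficient, including the term $-C(\epsilon+\epsilon^2)\|\partial_2u\|_{H^2}^2$.

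\textbf{Step 2.} Handle the extra term $\frac12\mathcal{F}(t)$. Its quadratic part,
\[
\|u\|_{H^3}^2+\|b\|_{H^3}^2+2\gamma^2\|\partial_tb\|_{H^3}^2+2\gamma\eta\|\nabla b\|_{H^3}^2+2\gamma(\partial_tb,b)_{H^3},
\]
is kept on the right-hand side exactly as stated. The remaining pieces are the cubic terms
\[
4\gamma\int\bigl((b\cdot\nabla)\nabla^3b\cdot\nabla^3u+\gamma\nabla^3\partial_tb\cdot(u\cdot\nabla)\nabla^3b\bigr)dx
\]
(together with their $L^2$ analogues), and the $\alpha$-cross terms $4\gamma\alpha\int\nabla^3u\cdot\nabla^3\partial_2b\,dx$ and $4\gamma\alpha\int u\cdot\partial_2b\,dx$. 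These must be bounded by
\[
\frac{C}{\epsilon}\|u\|_{H^3}^2+(1+t)\cdot(\text{small multiples of dissipation}).
\]
Every one of these terms contains a factor of $u$ in $H^3$, and its other factor is $\nabla b$ or $\partial_tb$ in $H^3$. Young's inequality therefore gives a bound of the form $\frac{C}{\epsilon}\|u\|_{H^3}^2+C\epsilon(\|\nabla b\|_{H^3}^2+\|\partial_tb\|_{H^3}^2)$. For the cubic terms we additionally use $\|b\|_{L^\infty}\le C\epsilon$ to place the $\epsilon^2$ on the $b$ side. The leftover $C\epsilon(\|\nabla b\|_{H^3}^2+\|\partial_tb\|_{H^3}^2)$ is then at most $(1+t)C\epsilon(\cdots)$, so it is absorbed into the $C(\epsilon+\epsilon^2)$ corrections of the dissipation.

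\textbf{Main obstacle.} The delicate point is the bookkeeping in Step 2. One must check that the $\alpha$-cross terms and the cubic terms never require a dissipation of $u_1$ beyond what is available, since no weighted dissipation of $u_1$ exists. Placing all of the $u$-dependence into $\frac{C}{\epsilon}\|u\|_{H^3}^2$ on the right-hand side is exactly what avoids this: that term is not weighted by $(1+t)$, and it will later be controlled through the interpolation between $\mathcal{E}_1$ and $\mathcal{E}_0$ in the proof of Proposition \ref{Prop1}.
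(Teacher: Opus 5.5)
Your proposal is correct and follows essentially the same route as the paper: weight the pointwise inequalities \eqref{mhd333} and \eqref{mhd32} by $(1+t)$, use $V(t)\le C(\epsilon+\epsilon^2)$ from \eqref{stability}, keep the quadratic part of the energy on the right-hand side, and bound the leftover cubic and $\alpha$-cross terms by $\frac{C}{\epsilon}\|u\|_{H^3}^2+C\epsilon(1+t)(\cdots)$ via Young's inequality. The only cosmetic difference is that the paper applies the product rule in two stages, first to the quadratic energy in \eqref{E1-1} and then to the cross terms in \eqref{E1-22}, whereas you apply it once to the full functional $\mathcal{F}$.
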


\begin{proof}
Just like the steps \eqref{3.1}-\eqref{mhd33} and \eqref{3.18}-\eqref{3.19}, direct calculation yields
\begin{align}\label{E1-1}
&\frac{1}{2}\frac{d}{dt}(1+t)\left(\|u\|^2_{H^3}+\| b\|^2_{H^3}+2\gamma^2\|\partial_tb\|^2_{H^3}
+2\gamma\eta\|\nabla b\|^2_{H^3}+2\gamma(\partial_tb,b)_{H^3}
\right)\nonumber\\
&\quad+(1+t)\left(\mu\|u_2\|^2_{H^3}+\eta\|\nabla b\|^2_{H^3}+\gamma\|\partial_tb\|^2_{H^3}\right)\nonumber\\
&=\frac{1}{2}\left(\|u\|^2_{H^3}+\| b\|^2_{H^3}+2\gamma^2\|\partial_tb\|^2_{H^3}
+2\gamma\eta\|\nabla b\|^2_{H^3}+2\gamma(\partial_t b,b)_{H^3}
\right)\nonumber\\
&\quad+2\gamma(1+t)((b\cdot\nabla) u,\partial_tb)_{H^3} -2\gamma(1+t)((u\cdot\nabla) b,\partial_t b)_{H^3}\nonumber\\
&\quad+2\gamma\alpha(1+t)(\partial_2u,\partial_tb)_{H^3}-(1+t)((u\cdot\nabla) u,u)_{H^3}\nonumber\\
&\quad+(1+t)((b\cdot\nabla) b,u)_{H^3} +(1+t)((b\cdot\nabla) u,b)_{H^3}-(1+t)((u\cdot \nabla) b,b)_{H^3}.
\end{align}

From \eqref{mhd333} and \eqref{mhd32}, we have
\begin{align}\label{E1-2}
&\frac{1}{2}\frac{d}{dt}(1+t)\Big(\|u\|^2_{H^3}+\| b\|^2_{H^3}+2\gamma^2\|\partial_tb\|^2_{H^3}
+2\gamma\eta\|\nabla b\|^2_{H^3}+2\gamma(\partial_tb,b)_{H^3}\Big)\nonumber\\
&\quad+(1+t)\big(\frac{\mu}{3}\|u_2\|^2_{H^3}+(\eta-2\gamma\alpha^2-\frac{3\gamma^2\alpha^2\mu}{2})\|\nabla b\|^2_{H^3}+\gamma\|\partial_{t}b\|^2_{H^3}\big)\nonumber\\
& \leq
\frac{1}{2}\left(\|u\|^2_{H^3}+\| b\|^2_{H^3}+2\gamma^2\|\partial_tb\|^2_{H^3}
+2\gamma\eta\|\nabla b\|^2_{H^3}+2\gamma(\partial_t b,b)_{H^3}
\right)\nonumber\\
&\quad+CV(t)(1+t)\left(\|\partial_{t} b(t)\|_{H^3}^2+\|\nabla b(t)\|_{H^3}^2+\|u_2(t)\|_{H^3}^2+\|\partial_{2}u(t)\|_{H^2}^2\right)\nonumber\\&\quad-2\gamma(1+t)\frac{d}{dt}\int_{\R^2}\big((b\cdot\nabla)\nabla^3 b\cdot\nabla^3 u+\gamma\nabla^3\partial_{t}b\cdot (u\cdot\nabla)\nabla^3 b+\alpha\nabla^3u\cdot\nabla^3\partial_2 b\big)dx\nonumber\\
&\quad-2\gamma(1+t)\frac{d}{dt}\int_{\R^2}\big((b\cdot\nabla) b\cdot u+\gamma\partial_{t}b\cdot (u\cdot\nabla) b+\alpha u\cdot\partial_2 b\big)dx,
\end{align}
where $V(t)$ is defined in \eqref{VT}.

For the third and fourth lines on the right-hand side of \eqref{E1-2}, direct computation yields
\begin{align}\label{E1-22}
&-2\gamma(1+t)\frac{d}{dt}\int_{\R^2}\big((b\cdot\nabla)\nabla^3 b\cdot\nabla^3 u+\gamma\partial_{t}\nabla^3b\cdot (u\cdot\nabla)\nabla^3 b+\alpha\nabla^3u\cdot\nabla^3\partial_2 b\big)dx\nonumber\\
&-2\gamma(1+t)\frac{d}{dt}\int_{\R^2}\big((b\cdot\nabla) b\cdot u+\gamma\partial_{t}b\cdot (u\cdot\nabla) b+\alpha u\cdot\partial_2 b\big)dx\nonumber\\
&=-2\gamma\frac{d}{dt}(1+t)\int_{\R^2}\big((b\cdot\nabla)\nabla^3 b\cdot\nabla^3 u+\gamma\partial_{t}\nabla^3b\cdot (u\cdot\nabla)\nabla^3 b+\alpha\nabla^3u\cdot\nabla^3\partial_2 b\big)dx\nonumber\\
&\quad+2\gamma\int_{\R^2}\big((b\cdot\nabla)\nabla^3 b\cdot\nabla^3 u+\gamma\partial_{t}\nabla^3b\cdot (u\cdot\nabla)\nabla^3 b+\alpha\nabla^3u\cdot\nabla^3\partial_2 b\big)dx\nonumber\\
&\quad-2\gamma\frac{d}{dt}(1+t)\int_{\R^2}\big((b\cdot\nabla) b\cdot u+\gamma\partial_{t}b\cdot (u\cdot\nabla) b+\alpha u\cdot\partial_2 b\big)dx\nonumber\\
&\quad+2\gamma\int_{\R^2}\big((b\cdot\nabla) b\cdot u+\gamma\partial_{t}b\cdot (u\cdot\nabla) b+\alpha u\cdot\partial_2 b\big)dx\nonumber\\
&\leq-2\gamma\frac{d}{dt}(1+t)\int_{\R^2}\big((b\cdot\nabla)\nabla^3 b\cdot\nabla^3 u+\gamma\partial_{t}\nabla^3b\cdot (u\cdot\nabla)\nabla^3 b+\alpha\nabla^3u\cdot\nabla^3\partial_2 b\big)dx\nonumber\\
&\quad-2\gamma\frac{d}{dt}(1+t)\int_{\R^2}\big((b\cdot\nabla) b\cdot u+\gamma\partial_{t}b\cdot (u\cdot\nabla) b+\alpha u\cdot\partial_2 b\big)dx\nonumber\\
&\quad+ C\epsilon(1+t)\left(\|\partial_{t} b\|_{H^3}^2+\|\nabla b\|_{H^3}^2+\|u_2\|_{H^3}^2+\|\partial_{2}u\|_{H^2}^2\right)+\frac{C}{\epsilon}\|u\|_{H^3}^2.
\end{align}
Here, we have used that
\begin{align*}
2\gamma\int_{\R^2}(b\cdot\nabla) b\cdot udx&\leq C\|b\|_{L^2}^{\frac{1}{2}}\|\partial_1b\|_{L^2}^{\frac{1}{2}}\|u\|_{L^2}^{\frac{1}{2}}\|\partial_2u\|_{L^2}^{\frac{1}{2}}\|\nabla b\|_{L^2}\nonumber\\
&\leq C\|(u,b)\|_{L^2}(\|\nabla b\|_{L^2}^{2}+\|\partial_2u\|_{L^2}^2)\nonumber\\
&\leq C\epsilon(1+t)(\|\nabla b\|_{L^2}^{2}+\|\partial_2u\|_{L^2}^2).
\end{align*}

Submitting \eqref{E1-2} and \eqref{E1-22} into \eqref{E1-1}, and using the bound $V(t)\leq C(\epsilon+\epsilon^2)$
derived from \eqref{stability}, we immediately find \eqref{inenergy11}.
\end{proof}

Next, we estimate the inner product $(1+t)(\partial_tb,\partial_2u)_{H^2}$ and prove the following lemma:
 \begin{Lem}\label{key4}
Assume that $(u,b)$ is a smooth
 solution to \eqref{type-wave-1}, then we have
\begin{align}\label{inenergy2}
&-2\gamma\frac{d}{dt}(1+t)(\partial_tb,\partial_2u)_{H^2}+(\alpha-C\epsilon)(1+t)\|\partial_2u\|_{H^2}^2\nonumber\\
&\leq(1+t)\Big((3\gamma(\mu+\alpha)+\frac{2}{\alpha}+C\epsilon)\|\partial_t b\|_{H^3}^2+(3\gamma\alpha+\frac{2\eta^2}{\alpha}+C\epsilon)\|\nabla b\|_{H^3}^2\nonumber\\
&\qquad\quad\quad\ \ +(3\gamma\mu+C\epsilon)\|u_2\|_{H^3}^2\Big)+\frac{\gamma^2}{\alpha}\|\partial_t b\|_{H^2}^2+\alpha\|\partial_2u\|_{H^2}^2.
\end{align}
\end{Lem}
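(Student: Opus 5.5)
The plan is to mimic the unweighted $\mathcal{E}_{02}$ estimate in Proposition~\ref{Pro22}, now carried out at the levels $k=0,2$ and multiplied by $(1+t)$. Concretely, apply $\nabla^k$ ($k=0,2$) to \eqref{type-wave-1}$_2$ and take the $L^2$ inner product with $\nabla^k\partial_2u$. This isolates $\alpha\|\nabla^k\partial_2u\|_{L^2}^2$ on one side. On the other side we get:
\begin{itemize}
\item the time-derivative pieces $\gamma(\partial_{tt}\nabla^k b,\nabla^k\partial_2u)$ and $(\partial_t\nabla^kb,\nabla^k\partial_2u)$;
\item the diffusion term $\eta(\Delta\nabla^kb,\nabla^k\partial_2u)$;
\item the nonlinear terms $(\nabla^k(u\cdot\nabla b),\nabla^k\partial_2u)$ and $(\nabla^k(b\cdot\nabla u),\nabla^k\partial_2u)$.
\end{itemize}
Because the $\gamma\partial_{tt}b$ term is written as $\gamma\frac{d}{dt}(\partial_t\nabla^kb,\nabla^k\partial_2u)-\gamma(\partial_t\nabla^kb,\partial_t\nabla^k\partial_2u)$, only the quantity $(\partial_tb,\partial_2u)_{H^2}$ appears under $\frac{d}{dt}$. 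This matches \eqref{inenergy2}; the $b$-term $(\partial_t\nabla^kb,\nabla^k\partial_2u)$ is kept undifferentiated and bounded directly. Next multiply by $2(1+t)$ and use $\frac{d}{dt}[(1+t)F]=(1+t)F'+F$. The extra term $-2\gamma(\partial_tb,\partial_2u)_{H^2}$ is bounded by Young as $\frac{\gamma^2}{\alpha}\|\partial_tb\|_{H^2}^2+\alpha\|\partial_2u\|_{H^2}^2$, which is exactly the unweighted remainder on the right of \eqref{inenergy2}.

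For the weighted terms, handle them in turn:
\begin{itemize}
\item $2(\partial_t\nabla^kb,\nabla^k\partial_2u)$ is bounded by $\frac{\alpha}{4}\|\nabla^k\partial_2u\|^2+\frac{4}{\alpha}\|\partial_t\nabla^kb\|^2$; the constants can be rebalanced to get the stated $\frac{2}{\alpha}$.
\item The diffusion term $2\eta(\Delta\nabla^kb,\nabla^k\partial_2u)$ gives $\frac{\alpha}{4}\|\nabla^k\partial_2 u\|^2+\frac{2\eta^2}{\alpha}\|\nabla b\|_{H^3}^2$ in the same way.
\item The key term $-2\gamma(\partial_t\nabla^kb,\partial_t\nabla^k\partial_2u)$ is handled by substituting $\partial_tu$ from \eqref{type-wave-1}$_1$, as in the bound for $J_1$. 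The pressure is dropped after applying $\mathbb{P}$, or killed by divergence-free pairing. The linear pieces $\alpha\partial_2 b$ and $-\mu(0,u_2)^T$ then give, after moving one $\partial_2$ onto $\partial_tb$ when $k=0$, the bound $\gamma(\mu+\alpha)\|\partial_tb\|_{H^3}^2+\gamma\alpha\|\nabla b\|_{H^3}^2+\gamma\mu\|u_2\|_{H^3}^2$, up to the factor $3$ in \eqref{inenergy2}.
\item The nonlinear pieces $\nabla^k\partial_2(b\cdot\nabla b-u\cdot\nabla u)$ paired with $\partial_t\nabla^kb$ are estimated exactly as $J_{11}$ and $J_{13}$.
\item The remaining nonlinear terms are estimated as $J_3$ and $J_5$ (and their $k=0$ analogues).
\end{itemize}
All nonlinear bounds take the form $C\|(u,b,\partial_tb)\|_{H^3}(\|\partial_tb\|_{H^3}^2+\|\nabla b\|_{H^3}^2+\|\nabla u\|_{H^2}^2)$.

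To close, use \eqref{stability}, which gives $\|(u,b,\partial_tb)\|_{H^3}\le C\epsilon$, and the identity $\|\nabla u\|_{H^2}^2\le \|\nabla u_2\|_{H^2}^2+\|\partial_2u\|_{H^2}^2$ (valid because $\partial_1u_1=-\partial_2u_2$). This turns the nonlinear contributions into $C\epsilon(1+t)(\|\partial_tb\|_{H^3}^2+\|\nabla b\|_{H^3}^2+\|u_2\|_{H^3}^2+\|\partial_2u\|_{H^2}^2)$. The $\partial_2u$ part is absorbed on the left as $-C\epsilon(1+t)\|\partial_2u\|_{H^2}^2$. The two $\frac{\alpha}{4}$ pieces are absorbed into $2\alpha\|\partial_2u\|_{H^2}^2$, which leaves $(\alpha-C\epsilon)(1+t)\|\partial_2u\|_{H^2}^2$ on the left.

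I expect the main obstacle to be the top-order term with $k=2$, namely $\gamma(\partial_t\nabla^2b,\nabla^2\partial_2(u\cdot\nabla u))$. It contains $\nabla^2\partial_2\nabla u$, which is of fourth order in $u$. As in the $J_{13}$ computation, I would move the $\partial_2$ onto $\partial_t\nabla^2b$, which is controlled in $H^1$ by $\|\partial_tb\|_{H^3}$. The highest-order remainder $(u\cdot\nabla)\nabla^2u$ is then bounded by $\|u\|_{L^\infty}\|\nabla^3u\|_{L^2}$. Lower-order products are handled with Lemma~\ref{2.1}, so that every factor $\nabla^3u$ appears only through $\|\partial_1u\|_{H^2}+\|\partial_2u\|_{H^2}$.
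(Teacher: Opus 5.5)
Your proposal is correct and follows the paper's proof essentially step for step. The paper also pairs the $b$-equation with $\partial_2u$ and integrates the $\gamma\partial_{tt}b$ term by parts in time. It bounds the weight commutator $-\gamma(\partial_tb,\partial_2u)_{H^2}$, the undifferentiated term $(\partial_tb,\partial_2u)_{H^2}$, the diffusion term and the two transport terms by H\"older and Young. It handles $-\gamma(\partial_tb,\partial_t\partial_2u)_{H^2}$ by substituting the velocity equation exactly as in \eqref{J1} and \eqref{JJJ1}, and closes with \eqref{stability}.

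The differences are cosmetic:
\begin{itemize}
\item The paper uses the full $H^2$ pairing over the levels $k=0,1,2$. The $k=1$ level is treated the same way, and summing three levels is where the factor $3$ comes from.
\item To land exactly on $\frac{2}{\alpha}$ and $\frac{2\eta^2}{\alpha}$, each of your two Young splittings should give away $\frac{\alpha}{2}\|\partial_2u\|_{H^2}^2$ rather than $\frac{\alpha}{4}$. As written, pairing $\frac{\alpha}{4}$ with $\frac{2\eta^2}{\alpha}$ is not a valid splitting of $2\eta xy$. With $\frac{\alpha}{2}$ each, precisely $\alpha$ is left on the left.
\end{itemize}
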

 \begin{proof}
 Invoking the equation of  \eqref{type-wave-1}$_2$, we have
 \begin{align*}
&-\gamma\frac{d}{dt}(1+t)(\partial_tb,\partial_2u)_{H^2}+\alpha(1+t)\|\partial_2u\|_{H^2}^2\nonumber\\
&=-\gamma(\partial_tb,\partial_2u)_{H^2}-\gamma(1+t)(\partial_tb,\partial_t\partial_2u)_{H^2}+(1+t)(\partial_tb,\partial_2u)_{H^2}\nonumber\\
&\quad-\eta(1+t)(\Delta b,\partial_2u)_{H^2}+(1+t)(u\cdot\nabla b,\partial_2u)_{H^2}-(1+t)(b\cdot\nabla u,\partial_2u)_{H^2}\nonumber\\
&:=L_{1}+L_2+L_3+L_4+L_5+L_6.
\end{align*}

By H\"{o}lder's inequality and Young's inequality, we obtain
 \begin{align*}
&L_1\leq \frac{\gamma^2}{2\alpha}\|\partial_t b\|_{H^2}^2+\frac{\alpha}{2}\|\partial_2u\|_{H^2}^2,\\
&L_3\leq \frac{1}{\alpha}(1+t)\|\partial_tb\|_{H^2}^2+\frac{\alpha}{4}(1+t)\|\partial_2u\|_{H^2}^2,\\
&L_4\leq \frac{\eta^2}{\alpha}(1+t)\|\Delta b\|_{H^2}^2+\frac{\alpha}{4}(1+t)\|\partial_2u\|_{H^2}^2,
\\&L_5\leq (1+t)\| u\|_{H^2}\| \partial_2u\|_{H^2}\|\nabla b\|_{H^2},
\\&L_6\leq(1+t)\| \nabla u\|_{H^2}^2\| b\|_{H^2}.
\end{align*}

For $L_2$, from \eqref{J1} and \eqref{JJJ1}, it can be obtained that
\begin{align*}
L_2&=-\gamma(1+t)\int_{\R^2}\big(\partial_t\partial_2 u\cdot \partial_tb+\partial_t \nabla\partial_2u\cdot \partial_t\nabla b+\partial_t\nabla^2\partial_2 u\cdot\partial_t \nabla^2b\big)dx\\
&\leq(1+t)\bigg(\frac{3\gamma(\mu+\alpha)}{2}\|\partial_t b\|_{H^3}^2+\frac{3\gamma\alpha}{2}\|\nabla b\|_{H^3}^2+\frac{3\gamma\mu}{2}\|u_2\|_{H^3}^2\\
&\qquad\qquad\quad+ C\| (u,b)\|_{H^{3}}\left(\|\partial_t b\|_{H^{3}}^2+\|\nabla b\|_{H^{3}}^2+\|\nabla u\|_{H^{2}}^2\right)\bigg).
\end{align*}

Collecting all the above estimates $L_1-L_6$ and \eqref{stability}, we can complete the proof.
 \end{proof}

  Now, putting together the two lemmas above yields Proposition \ref{Prop1}.

 \textit{Proof of Proposition} \ref{Prop1}. Thanks to Lemma \ref{key3} and \ref{key4}, for a sufficiently small $\kappa_2$, the combination \eqref{inenergy11}+$\kappa_2\cdot$\eqref{inenergy2} yields
\begin{align*}
&\frac{1}{2}\frac{d}{dt}(1+t)\Big(\|u\|^2_{H^3}+\| b\|^2_{H^3}+2\gamma^2\|\partial_tb\|^2_{H^3}
+2\gamma\eta\|\nabla b\|^2_{H^3}+2\gamma(\partial_tb,b)_{H^3}-4\kappa_2\gamma(\partial_tb,\partial_2u)_{H^2}\nonumber\\
&\qquad\quad\qquad\quad+4\gamma\int_{\R^2}\big((b\cdot\nabla)\nabla^3 b\cdot\nabla^3 u+\gamma\nabla^3\partial_{t}b\cdot (u\cdot\nabla)\nabla^3 b+\alpha\nabla^3u\cdot\nabla^3\partial_2 b\big)dx\nonumber\\
&\qquad\quad\qquad\quad+4\gamma\int_{\R^2}\big((b\cdot\nabla) b\cdot u+\gamma\partial_{t}b\cdot (u\cdot\nabla) b+\alpha u\cdot\partial_2 b\big)dx\Big)\nonumber\\
&+(1+t)\big((\frac{\mu}{3}-C(\epsilon+\epsilon^2)-\kappa_2(3\gamma\mu+C\epsilon))\|u_2\|^2_{H^3}+(\kappa_2(\alpha-C\epsilon)-C\epsilon-C\epsilon^2)\|\partial_2u\|_{H^2}^2\nonumber\\
&\qquad\quad\quad\quad +(\eta-2\gamma\alpha^2-\frac{3\gamma^2\alpha^2\mu}{2}-C(\epsilon+\epsilon^2)-\kappa_2(3\gamma\alpha+\frac{2\eta^2}{\alpha}+C\epsilon))\|\nabla b\|^2_{H^3}\nonumber\\
&\qquad\quad\quad\quad+(\gamma-C(\epsilon+\epsilon^2)-\kappa_2(3\gamma(\mu+\alpha)+\frac{2}{\alpha}+C\epsilon))\|\partial_tb\|^2_{H^3}
\big)\nonumber\\&\leq C\left(\|u\|^2_{H^3}+\| b\|^2_{H^3}+2\gamma^2\|\partial_tb\|^2_{H^3}
+2\gamma\eta\|\nabla b\|^2_{H^3}+2\gamma(\partial_t b,b)_{H^3}
\right)+\frac{C}{\epsilon}\|u\|_{H^3}^2.
\end{align*}

After integrating over $[0,t]$ and using \eqref{stability} and \eqref{4.1}, we have
\begin{align*}
&(1+t)\Big(\|u\|^2_{H^3}+\| b\|^2_{H^3}+2\gamma^2\|\partial_tb\|^2_{H^3}
+2\gamma\eta\|\nabla b\|^2_{H^3}+2\gamma(\partial_tb,b)_{H^3}-4\kappa_2\gamma(\partial_tb,\partial_2u)_{H^2}\nonumber\\
&\qquad\qquad+4\gamma\int_{\R^2}\big((b\cdot\nabla)\nabla^3 b\cdot\nabla^3 u+\gamma\nabla^3\partial_{t}b\cdot (u\cdot\nabla)\nabla^3 b+\alpha\nabla^3u\cdot\nabla^3\partial_2 b\big)dx\nonumber\\
&\qquad\qquad+4\gamma\int_{\R^2}\big((b\cdot\nabla) b\cdot u+\gamma\partial_{t}b\cdot (u\cdot\nabla) b+\alpha u\cdot\partial_2 b\big)dx\Big)\nonumber\\
&+2\int_{0}^{t}(1+\tau)\big((\frac{\mu}{3}-C(\epsilon+\epsilon^2)-\kappa_2(3\gamma\mu+C\epsilon))\|u_2(\tau)\|^2_{H^3}\nonumber\\
&\ \ \ \ \ \ \ \ +(\eta-2\gamma\alpha^2-\frac{3\gamma^2\alpha^2\mu}{2}-C(\epsilon+\epsilon^2)-\kappa_2(3\gamma\alpha+\frac{2\eta^2}{\alpha}+C\epsilon))\|\nabla b(\tau)\|^2_{H^3}\nonumber\\
&\ \ \ \ \ \ \ \ +(\gamma-C(\epsilon+\epsilon^2)-\kappa_2(3\gamma(\mu+\alpha)+\frac{2}{\alpha}+C\epsilon))\|\partial_{\tau}b(\tau)\|^2_{H^3}\nonumber\\
&\ \ \ \ \ \ \ \ +(\kappa_2(\alpha-C\epsilon)-C\epsilon-C\epsilon^2)\|\partial_2u(\tau)\|_{H^2}^2\big)d\tau\nonumber\\
&\leq C\int_{0}^{t}\left(\|u(\tau)\|^2_{H^3}+\| b(\tau)\|^2_{H^3}+2\gamma^2\|\partial_{\tau}b(\tau)\|^2_{H^3}
+2\gamma\eta\|\nabla b(\tau)\|^2_{H^3}+\frac{C}{\epsilon}\|u(\tau)\|_{H^3}^2\right)d\tau+C.
\end{align*}

Noting that
\begin{align*}
&2\gamma(\partial_tb,b)_{H^3}
\leq \frac{2}{3}\|b\|_{H^3}^2+\frac{3\gamma^2}{2}\|\partial_t b\|_{H^3}^2,\\
&4\gamma\alpha\int_{\R^2}u\cdot\partial_2bdx
\leq \frac{2}{3}\|u\|_{L^2}^2+6\gamma^2\alpha^2\|\nabla b\|_{L^2}^2,\\
&4\gamma\alpha\int_{\R^2}\nabla^3u\cdot\nabla^3\partial_2bdx
\leq \frac{2}{3}\|\nabla^3u\|_{L^2}^2+6\gamma^2\alpha^2\|\nabla^4 b\|_{L^2}^2,\\
&-4\kappa_2\gamma(\partial_tb,\partial_2u)_{H^2}\leq 2\gamma\kappa_2\|\partial_tb\|_{H^2}^2+2\gamma\kappa_2\|u\|_{H^3}^2,
\end{align*}
and
 \begin{align*}
&4\gamma\int_{\R^2}\big((b\cdot\nabla) b\cdot u+\gamma\partial_{t}b\cdot (u\cdot\nabla) b\big)dx\nonumber\\&\quad+4\gamma\int_{\R^2}\big((b\cdot\nabla)\nabla^3 b\cdot\nabla^3 u+\gamma\partial_{t}\nabla^3b\cdot (u\cdot\nabla)\nabla^3 b\big)dx\nonumber\\
&\leq C\epsilon(\|\nabla b\|_{H^3}^2+\|\nabla u\|_{H^2}^2+\|\partial_t b\|_{H^3}^2)
\end{align*}
where we have used \eqref{stability}.
 Therefore, if $\alpha,\kappa_2$ and $\epsilon$ are sufficiently small, we can get \eqref{key2}. \hfill$\square$

\subsection{The decay rates for $\|\nabla^2 u(t)\|_{H^{1}},\|\nabla^2 b(t)\|_{H^{2}}$ and $
		\|\partial_t\nabla^2 b(t)\|_{H^{1}}$}
This subsection is devoted to proving $a$ $priori$ estimates for $E_2(t)$ and then the decay rates for $\|\nabla^2 u(t)\|_{H^{1}},\|\nabla^2 b(t)\|_{H^{2}}$ and $\|\partial_t\nabla^2 b(t)\|_{H^{1}}$.
\begin{Prop}\label{Prop2}
For some constant $C>0$, it holds that
\begin{align}\label{as3}
		E_2(t)
		\leq C.
\end{align}
\end{Prop}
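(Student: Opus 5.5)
We will follow the scheme of Proposition~\ref{Prop1}, now with weight $(1+t)^3$ and only the top derivative levels $\nabla^2$ and $\nabla^3$ of $(u,b,\partial_tb)$ together with $\nabla^4 b$. We proceed by induction on the derivative level: first we bound $E_1(t)$ in the same way (an intermediate proposition analogous to \eqref{key2}), and then $E_2(t)$. This way the lower-order terms generated by differentiating the time weight can be absorbed using $E_1$.

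\textbf{Step 1 (weighted $\dot H^2\cap\dot H^3$ energy).} Apply $\nabla^2$ to \eqref{type-wave-1}. Take $L^2$ inner products with $(\nabla^2u,\nabla^2 b)$ and with $2\gamma\,\partial_t\nabla^2 b$, exactly as in \eqref{3.1}--\eqref{mhd33}. Do the same at the $\nabla^3$ level, as in \eqref{3.18}--\eqref{mhd3}. Then multiply by $(1+t)^3$.

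At the $\nabla^3$ level the dangerous terms $A_1$ and $A_2$ reappear. We handle them by invoking Proposition~\ref{A1b}, i.e.\ \eqref{AAA1}, multiplied by $(1+t)^3$. The time-derivative corrector terms are treated as in \eqref{E1-22}: moving $(1+t)^3$ inside $\frac{d}{dt}$ costs $3(1+t)^2$ times the corrector. This corrector is cubic or contains $\alpha\nabla^3u\cdot\nabla^3\partial_2 b$, so it is bounded by $(1+t)^2\big(\|\nabla^3u\|_{L^2}^2+\|\nabla^4 b\|_{L^2}^2\big)$.

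The nonlinear remainders in \eqref{AAA1} carry the factor $V(t)\le C(\epsilon+\epsilon^2)$ from \eqref{stability}. We will take care that every dissipation factor appearing there is of order at least $\nabla^2$: namely $\|\partial_t\nabla^3b\|$, $\|\nabla^2\partial_iu\|$, $\|\nabla^3b\|_{H^1}$. That makes these remainders absorbable into the $(1+t)^3$-weighted dissipation of $E_2$.

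Lower-order commutator pieces, such as $\nabla u\,\nabla^2 u$ against $\nabla^2 u$, will be split using the anisotropic Lemma~\ref{2.1}. They will be bounded by $\|\nabla u\|_{H^2}\,(1+t)^3(\|\nabla^2\partial_1u\|^2+\|\nabla^2\partial_2u\|^2+\cdots)$, and we use $\|\nabla u\|_{H^2}\le C\epsilon$ again.

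\textbf{Step 2 (weighted $x_2$-dissipation).} Mirroring Lemma~\ref{key4}, we differentiate $-2\gamma(1+t)^3(\partial_t\nabla^2 b,\nabla^2\partial_2u)$ and use the magnetic equation together with the $J_1$-type estimate \eqref{JJJ1}. This produces $\alpha(1+t)^3\|\nabla^2\partial_2u\|_{L^2}^2$, which is the weighted $\alpha\|\nabla^2\partial_2 u\|_{H^0}^2$ part of $E_2$. Adding $\kappa_3$ times this to Step 1, with $\kappa_3,\alpha,\epsilon$ small, gives a differential inequality of the form
\[
\frac{d}{dt}\big[(1+t)^3\mathcal F_2\big]+c(1+t)^3\mathcal D_2\le C(1+t)^2\big(\|\nabla^2u\|_{H^1}^2+\|\nabla^2 b\|_{H^2}^2+\|\partial_t\nabla^2 b\|_{H^1}^2\big).
\]
Here $\mathcal F_2$ is equivalent to $\|\nabla^2u\|_{H^1}^2+\|\nabla^2b\|_{H^2}^2+\gamma^2\|\partial_t\nabla^2b\|_{H^1}^2$.

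\textbf{Step 3 (closing) and main obstacle.} We integrate in time. The right side must be controlled by $\int_0^t(1+\tau)^2\mathcal D_1\,d\tau\le E_1(t)\le C$. The $b$ and $\partial_t b$ parts and the $u_2$ part are directly dissipated in $E_1$. The $u_1$ part is the obstacle. Indeed $\|\nabla^2u_1\|_{H^1}$ is not dissipated, but it satisfies $\|\nabla^2 u_1\|_{L^2}\le\|\nabla\partial_2u\|_{L^2}+\|\nabla\partial_1u_1\|$, and since $\partial_1u_1=-\partial_2u_2$, the $\nabla^2 u$ norm is controlled by $\|\nabla\partial_2u\|_{H^1}+\|\nabla u_2\|_{H^2}$. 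Both are dissipated in $E_1$ with weight $(1+t)^2$, via $\alpha\|\nabla\partial_2u\|_{H^1}^2$ and $\mu\|\nabla u_2\|_{H^2}^2$.

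This is exactly why $E_1$ must be established first. At the $k=0$ level the analogous bad term $\frac{C}{\epsilon}\|u\|_{H^3}^2$ was handled by $\mathcal E_1$ and the identity $\|u_1\|=\|\Lambda^{-1}\partial_2u\|$, i.e.\ Proposition~\ref{Prop11}. At each higher level it is handled the same way, by the previous level. Thus $E_1\le C$ (proved identically from $E_0\le C$ in \eqref{key2}) gives $E_2(t)\le C$.
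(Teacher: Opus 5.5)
Your argument is correct in outline, but it closes the weighted estimate by a different mechanism than the paper. You go $E_0\to E_1\to E_2$ and absorb the weight-derivative term $3(1+t)^2\bigl(\|\nabla^2u\|_{H^1}^2+\|\nabla^2b\|_{H^2}^2+\|\partial_t\nabla^2b\|_{H^1}^2\bigr)$ directly into the $(1+t)^2$-weighted dissipation of $E_1$. This works precisely because $\|\nabla^2u\|_{L^2}\lesssim\|\nabla\partial_2u\|_{L^2}+\|\nabla^2u_2\|_{L^2}$.

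The paper never runs an energy estimate for $E_1$. In Lemma \ref{keyy} it splits the same term with the Gagliardo--Nirenberg inequality $\|\nabla^2f\|_{L^2}^2\le C\|\nabla f\|_{L^2}\|\nabla^3f\|_{L^2}$, giving $(1+t)^2\|\nabla^2u\|_{H^1}^2\le C\epsilon(1+t)^3\|\nabla^3u\|_{L^2}^2+C\epsilon^{-1}(1+t)\|\nabla u\|_{H^2}^2$. The first piece is absorbed by the $E_2$ dissipation itself, since $\|\nabla^3u\|_{L^2}\lesssim\|\nabla^3u_2\|_{L^2}+\|\nabla^2\partial_2u\|_{L^2}$. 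The second is integrated using $E_0$. Lower-order cubic terms are kept as products of the weighted $E_2$ energy with time-integrable quantities from $\mathcal E_0,\mathcal E_1$ and removed by Gronwall. Finally, $E_1\le CE_0^{1/2}E_2^{1/2}$ is recovered afterwards by interpolation (Proposition \ref{key51}).

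So the paper's route saves an entire weighted energy estimate: the $k=1$ level, with its own use of Proposition \ref{A1b} and its own $x_2$-dissipation functional. Yours pays for that extra estimate, but its closing step is more transparent: every remainder reduces to $C(1+t)^2\mathcal D_1$ plus absorbable terms, with no interpolation or Gronwall step.

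Two details in your Step 1 need adjusting; both are fixable within your scheme.

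\emph{Lower-order cubic terms.} At the $\nabla^2$ level these cannot be bounded by $\epsilon$ times $E_2$-dissipation. For instance, $\int\partial_2u_2\,(\partial_2^2u_1)^2\,dx$ involves $\|\partial_2^2u_1\|_{L^2}$, which $E_2$ does not dissipate. A bound of the form $C\epsilon(1+t)^3\|\nabla^2u\|_{L^2}^2$ does not help either, because it carries the wrong weight for $E_1$. You must use the \emph{decay} of the small factor, not just its size. For example, $\|\nabla u\|_{L^\infty}\le C\|\nabla u\|_{H^2}\le C(1+t)^{-1}$ from the sup part of $E_1$ reduces such terms to $C(1+t)^2\|\nabla^2u\|_{L^2}^2$, which $E_1$ controls.

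\emph{The linear coupling at non-top levels.} The term $2\gamma\alpha(\nabla^2\partial_2u,\partial_t\nabla^2b)$ is not harmless, even though $\nabla^2\partial_2u$ is dissipated. It has the same sign as the term $\kappa_3(\partial_t\nabla^2b,\nabla^2\partial_2u)$ produced by Lemma \ref{keyy1}. Absorbing both by Young's inequality against $\kappa_3\alpha\|\nabla^2\partial_2u\|_{L^2}^2+\gamma\|\partial_t\nabla^2b\|_{L^2}^2$ would require $(\kappa_3+2\gamma\alpha)^2\le4\kappa_3\gamma\alpha$, which never holds. Instead, rewrite it through the velocity equation as $-2\gamma\alpha\frac{d}{dt}(\nabla^2u,\nabla^2\partial_2b)+2\gamma\alpha^2\|\nabla^2\partial_2b\|_{L^2}^2+\cdots$, as the paper does in the $M_{23}$ chain. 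At this level no derivative is lost, so this is routine.
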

The proof of Proposition \ref{Prop2} is long, so we divide it into two lemmas.
\begin{Lem}\label{keyy}
Assume that $(u,b)$ is a smooth solution to \eqref{type-wave-1}, then we have
\begin{align}\label{boundE}
&\frac{1}{2}\frac{d}{dt}(1+t)^3\Big(\|\nabla^2 u\|^2_{H^1}+\|\nabla^2 b\|^2_{H^1}+2\gamma^2\|\partial_t\nabla^2b\|^2_{H^1}
+2\gamma\eta\|\nabla^{3}b\|^2_{H^1}+2\gamma(\partial_t\nabla^2b,\nabla^2b)_{H^1}
\nonumber\\
&\qquad\qquad\quad+4\gamma\sum_{m=2}^3\int_{\mathbb{R}^2}
		\big((b\cdot\nabla)\nabla^m b\cdot\nabla^m u
		+\alpha\nabla^m u\cdot\nabla^m\partial_2 b
		+\gamma\partial_t\nabla^m b\cdot (u\cdot\nabla)\nabla^m b\big)dx\nonumber\\
&+(\frac{\mu}{3}-C\epsilon-C\epsilon^2)(1+t)^3\|\nabla^{2}u_2\|^2_{H^1}+(\eta-2\gamma\alpha^2-\frac{3\gamma^2\alpha^2\mu}{2}-C\epsilon-C\epsilon^2)(1+t)^3\|\nabla^{3}b\|^2_{H^1}\nonumber\\
&+(\gamma-C\epsilon-C\epsilon^2)(1+t)^3\|\partial_t\nabla^2b\|^2_{H^1}- C(\epsilon+\epsilon^2)(1+t)^3\|\nabla^2\partial_{2}u\|_{L^2}^2\nonumber\\
&\leq C(1+t)^3(\|\nabla^2 u\|_{L^2}^2+\|\nabla^2 b\|_{L^2}^2+\|\partial_t\nabla^2 b\|_{L^{2}}^2)(\|b\|_{H^3}^2+\| u_2\|_{H^3}^2+\| \partial_2u\|_{H^2}^2+\|\partial_t b\|_{H^3}^2)\nonumber\\
&\quad+\frac{C}{4\epsilon}(1+t)(\|u_2\|_{H^3}^2+\|\partial_2 u\|_{H^2}^2+\|\nabla b\|_{H^3}^2+\|\partial_t b\|_{H^3}^2).
\end{align}
\end{Lem}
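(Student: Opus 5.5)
The plan is to repeat the $\dot H^2$ and $\dot H^3$ energy computation of Step II in the proof of Proposition~\ref{Pro11}, now with weight $(1+t)^3$, in the same way Lemma~\ref{key3} weighted the full $H^3$ estimate by $(1+t)$. For $m=2,3$ I apply $\nabla^m$ to \eqref{type-wave-1}$_1$ and \eqref{type-wave-1}$_2$ and test with $\nabla^m u$, $\nabla^m b$. I then apply $\nabla^m$ to \eqref{type-wave-1}$_2$, test with $\partial_t\nabla^m b$, multiply by $2\gamma$, and add. This gives the analogue of \eqref{mhd3} at levels $m=2,3$.

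Multiplying by $(1+t)^3$ and writing $(1+t)^3\frac{d}{dt}F=\frac{d}{dt}\bigl((1+t)^3F\bigr)-3(1+t)^2F$ produces the time-derivative terms on the left of \eqref{boundE}. It also produces a commutator term $3(1+t)^2F$, where $F$ collects $\|\nabla^2 u\|_{H^1}^2$, $\|\nabla^2 b\|_{H^1}^2$, $\|\partial_t\nabla^2 b\|_{H^1}^2$, $\|\nabla^3 b\|_{H^1}^2$ and the corrector integrals. The $b$ and $\partial_t b$ parts of $F$ are dissipated quantities, so they are bounded by the dissipative norms that appear in $E_1$. Since $E_1$ is not yet available, I instead bound $3(1+t)^2F$ crudely. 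The $\nabla^3$-level pieces and $\|\nabla^2 u\|_{H^1}^2\lesssim\|u_2\|_{H^3}^2+\|\partial_2u\|_{H^2}^2$ (using $\partial_1u_1=-\partial_2u_2$) fit into $(1+t)\times$(dissipation). Here I use $(1+t)^2\cdot(1+t)^{-1}$ decay from $E_0\le C$ (Proposition~\ref{Prop1}) together with Young's inequality with weight $\epsilon$; this gives the term $\frac{C}{4\epsilon}(1+t)(\cdots)$. The corrector integrals are handled with the Young splits already used in \eqref{Y1}--\eqref{Y4}.

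For the nonlinear terms I reuse the estimates \eqref{control I1}--\eqref{control I5} at level $m$. For the dangerous term $A_1$ at the levels $\nabla^2,\nabla^3$, I invoke Proposition~\ref{A1b}. Its proof goes through verbatim with $\partial_i^3$ replaced by $\nabla^m$, $m=2,3$, and it yields the corrector $\frac{d}{dt}$-terms (which are absorbed into the left side), the cancellation of $A_2$, and the $\frac{2\mu}{3}$ and $2\gamma\alpha^2+\frac{3\gamma^2\alpha^2\mu}{2}$ losses visible in the coefficients of \eqref{boundE}. The remaining products I split in two ways. Those in which every factor carries at least two derivatives are bounded by $\|(\nabla^2u,\nabla^2b,\partial_t\nabla^2b)\|_{L^2}^2$ times $(\|b\|_{H^3}^2+\|u_2\|_{H^3}^2+\|\partial_2u\|_{H^2}^2+\|\partial_tb\|_{H^3}^2)$; this is the first term on the right of \eqref{boundE}. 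Those with a low-order factor in $L^\infty$ are bounded by $V(t)\le C(\epsilon+\epsilon^2)$ from \eqref{stability} times the weighted dissipation, and are absorbed on the left. That absorption is the origin of the $-C(\epsilon+\epsilon^2)$ corrections, including the one multiplying $\|\nabla^2\partial_2u\|_{L^2}^2$, which itself is not dissipated in this lemma.

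I expect the main obstacle to be the top-order terms whose lower-order factor has too few derivatives to exploit decay, for example $\|\nabla u\|_{L^\infty}\|\nabla^3u\|^2$ in $I_1$. Undamped $u_1$ only enters through $\partial_2u$, so I plan to rely on Lemma~\ref{2.1} with the splitting $\|\nabla^2u\|^{1/2}\|\nabla^2\partial_1u\|^{1/2}\|\nabla^2u\|^{1/2}\|\nabla^2\partial_2u\|^{1/2}$, as in \eqref{SP-11}. This places all half-powers either on dissipated quantities or on $\|\nabla^2u\|_{L^2}$, where the latter is paired with the first product term of the right side. The pressure contributions are treated by \eqref{SP}.
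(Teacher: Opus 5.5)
Your overall architecture is the paper's. That means the weighted $\nabla^2$/$\nabla^3$ identity, the substitution chain of Proposition~\ref{A1b} at both levels cancelling against $2\gamma\alpha(\nabla^2\partial_2u,\partial_t\nabla^2b)_{H^1}$, and the split of nonlinear terms into $\epsilon$-small multiples of weighted dissipation plus Gronwall products. However, your treatment of the weight-derivative term $\frac32(1+t)^2F$ does not close.

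The split $(1+t)^2\le\epsilon(1+t)^3+\frac{1}{4\epsilon}(1+t)$ works for the pieces of $F$ that are dissipated at weight $(1+t)^3$. These are $\|\nabla^3u\|_{L^2}^2=\|\nabla^3u_2\|_{L^2}^2+\|\nabla^2\partial_2u\|_{L^2}^2$, $\|\nabla^3b\|_{H^1}^2$ and $\|\partial_t\nabla^2b\|_{H^1}^2$. For $\|\nabla^2u\|_{L^2}^2$, $\|\nabla^2b\|_{L^2}^2$ and the cross term $(\partial_t\nabla^2b,\nabla^2b)$, it leaves $\epsilon(1+t)^3\|\nabla^2u\|_{L^2}^2$ and $\epsilon(1+t)^3\|\nabla^2b\|_{L^2}^2$.

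These leftovers cannot be absorbed. At weight $(1+t)^3$, the left side of \eqref{boundE} controls neither $\|\nabla^2b\|_{L^2}$ nor $\|\partial_2^2u_1\|_{L^2}$. Your bound $\|\nabla^2u\|_{L^2}^2\lesssim\|u_2\|_{H^3}^2+\|\partial_2u\|_{H^2}^2$ only helps at weight $(1+t)$. The leftovers also do not have the product form allowed on the right.

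Using the pointwise decay $\|\nabla^2u\|_{L^2}^2\lesssim(1+t)^{-1}$ from $E_0\le C$ instead gives $(1+t)^2\|\nabla^2u\|_{L^2}^2\lesssim 1+t$, which is not integrable. The $(1+t)^2$-level information of $E_1$ is, as you note, unavailable; the paper actually derives $E_1$ from $E_0$ and $E_2$.

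The missing idea is the interpolation $\|\nabla^2f\|_{L^2}^2\le C\|\nabla f\|_{L^2}\|\nabla^3 f\|_{L^2}$, which moves the extra weight onto a top-order dissipated quantity:
\[
(1+t)^2\|\nabla^2 f\|_{L^2}^2\le C\left((1+t)\|\nabla f\|_{L^2}^2\right)^{1/2}\left((1+t)^3\|\nabla^3 f\|_{L^2}^2\right)^{1/2}\le C\epsilon(1+t)^3\|\nabla^3 f\|_{L^2}^2+\frac{C}{4\epsilon}(1+t)\|\nabla f\|_{L^2}^2 .
\]
Here $\|\nabla u\|_{L^2}^2=\|\nabla u_2\|_{L^2}^2+\|\partial_2u\|_{L^2}^2$. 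This is how the paper obtains \eqref{m1-0}.

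For the same reason, the weight-derivatives of the corrector integrals must be split as $((1+t)\|\nabla^2u\|_{L^2}^2)^{1/2}((1+t)^3\|\nabla^3b\|_{L^2}^2)^{1/2}$, as in \eqref{T3}--\eqref{T4}. The unweighted splits \eqref{Y1}--\eqref{Y4} would reproduce $(1+t)^2\|\nabla^2u\|_{L^2}^2$.

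Two smaller points. First, Proposition~\ref{A1b} does not carry over verbatim to $m=2$. With $\partial_i^3$ replaced by $\nabla^2$, its error terms contain $V(t)$ times $\|\nabla^2u\|_{L^2}^2$ or $\|\nabla^2b\|_{L^2}^2$, e.g.\ from $\|b\|_{L^\infty}\|\nabla u\|_{L^\infty}\|\nabla^2u\|_{L^2}\|\nabla^3b\|_{L^2}$. These are again not dissipated at weight $(1+t)^3$. Each must be converted, inside the substitution chain, into $\epsilon\|\nabla^3b\|_{L^2}^2$ plus a Gronwall product such as $\|\nabla^2u\|_{L^2}^2\|\nabla u\|_{H^2}^2$ or $\|\nabla^2b\|_{L^2}^2\|\nabla b\|_{H^2}^2$. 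This is why the paper redoes $M_{23}$ in three steps rather than quoting \eqref{AAA1}.

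Second, the term you single out as the main obstacle, $\|\nabla u\|_{L^\infty}\|\nabla^3u\|_{L^2}^2$, is harmless, since $\|\nabla^3u\|_{L^2}^2$ is dissipated at weight $(1+t)^3$. The delicate terms are level-two ones such as $\int(\nabla^2u\cdot\nabla)u\cdot\nabla^2u\,dx$. There $\|\nabla^2u\|_{L^2}^2$ is not dissipated at that weight, and Lemma~\ref{2.1} is needed to produce an admissible Gronwall product.
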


\begin{proof}
Applying the operator $\nabla^2$ to \eqref{type-wave-1}$_1$ and \eqref{type-wave-1}$_2$, and then taking the $H^1$-inner product of the resulting equations with $\nabla^2 u$ and $\nabla^2 b$, we have
\begin{align}\label{4.10}
&\frac{1}{2}\frac{d}{dt}(\|\nabla^2u\|^2_{H^1}+\|\nabla^2b\|^2_{H^1}+2\gamma(\partial_t\nabla^2b,\nabla^2b)_{H^1})+\mu\|\nabla^{2}u_2\|^2_{H^1}+\eta\|\nabla^{3}b\|^2_{H^1}-\gamma\|\partial_t\nabla^2b\|^2_{H^1}\nonumber\\
&=-\left(\nabla^2(u\cdot\nabla u),\nabla^2u\right)_{H^1}+\left(\nabla^2(b\cdot\nabla b),\nabla^2u\right)_{H^1}\nonumber\\
&\quad+\left(\nabla^2(b\cdot\nabla u),\nabla^2b\right)_{H^1}-\left(\nabla^2(u\cdot\nabla b),\nabla^2b\right)_{H^1}.
\end{align}

 We apply the operator $\nabla^2$ to \eqref{type-wave-1}$_2$, and take the $H^1$-inner product of the resulting equations with $\partial_t\nabla^2b$, then we obtain
\begin{align}\label{4.11}
&\frac{1}{2}\frac{d}{dt}(\gamma\|\partial_t\nabla^2b\|^2_{H^1}+\eta\|\nabla^{3}b\|^2_{H^1})+\|\partial_t\nabla^2b\|^2_{H^1}\nonumber\\
&=(\nabla^2(b\cdot\nabla u),\partial_t\nabla^2 b)_{H^1} +\alpha(\nabla^2\partial_2u\cdot\partial_t\nabla^2b)_{H^1}-(\nabla^2(u\cdot\nabla b)\cdot\partial_t\nabla^2 b)_{H^1}.
\end{align}

Multiplying \eqref{4.11} by $2\gamma$ and adding it to \eqref{4.10}, then multiplying the resultant by the time weight $(1+t)^3$ leads to
\begin{align*}
&\frac{1}{2}\frac{d}{dt}(1+t)^3\left(\|\nabla^2 u\|^2_{H^1}+\|\nabla^2 b\|^2_{H^1}+2\gamma^2\|\partial_t\nabla^2b\|^2_{H^1}
+2\gamma\eta\|\nabla^{3}b\|^2_{H^1}+2\gamma(\partial_t\nabla^2b,\nabla^2b)_{H^1}
\right)\nonumber\\
&+\mu(1+t)^3\|\nabla^{2}u_2\|^2_{H^1}+\eta(1+t)^3\|\nabla^{3}b\|^2_{H^1}+\gamma(1+t)^3\|\partial_t\nabla^2b\|^2_{H^1}\nonumber\\
&=\frac{3}{2}(1+t)^2\left(\|\nabla^2 u\|^2_{H^1}+\|\nabla^2 b\|^2_{H^1}+2\gamma^2\|\partial_t\nabla^2b\|^2_{H^1}
+2\gamma\eta\|\nabla^{3}b\|^2_{H^1}+2\gamma(\partial_t\nabla^2b,\nabla^2b)_{H^1}
\right)\nonumber\\
&\quad+2\gamma(1+t)^3(\nabla^2(b\cdot\nabla u),\partial_t\nabla^2 b)_{H^1} +2\gamma\alpha(1+t)^3(\nabla^2\partial_2u,\partial_t\nabla^2b)_{H^1}\nonumber\\
&\quad-2\gamma(1+t)^3(\nabla^2(u\cdot\nabla b),\partial_t\nabla^2 b)_{H^1}-(1+t)^3(\nabla^2(u\cdot\nabla u),\nabla^2u)_{H^1}\nonumber\\
&\quad+(1+t)^3(\nabla^2(b\cdot\nabla b),\nabla^2u)_{H^1} +(1+t)^3(\nabla^2(b\cdot\nabla u),\nabla^2b)_{H^1}-(1+t)^3(\nabla^2(u\cdot \nabla b),\nabla^2b)_{H^1}\nonumber\\
&:=(1+t)^2M_1+(1+t)^3\sum_{i=2}^8M_i.
\end{align*}

By the Gagliardo-Nirenberg interpolation inequality, we have
\begin{align*}
\|\nabla^2 u\|_{L^2}\leq C\| \nabla u\|_{L^2}^{\frac{1}{2}}\|\nabla^{3}u\|_{L^2}^{\frac{1}{2}},
\end{align*}
and then we get
\begin{align*}
(1+t)^2\|\nabla^2 u\|^2_{H^1}
&=(1+t)^2(\|\nabla^2 u\|^2_{L^2}+\|\nabla^3 u\|^2_{L^2})\nonumber\\
&\leq C\left((1+t)\|\nabla u\|_{H^2}^2\right)^{\frac{1}{2}}\left((1+t)^3\|\nabla^3 u\|_{L^2}^2\right)^{\frac{1}{2}}\nonumber\\&\leq C\epsilon(1+t)^3\|\nabla^3 u\|_{L^2}^2+\frac{C}{4\epsilon}(1+t)\|\nabla u\|_{H^2}^2.
\end{align*}
The other terms in $M_{1}$ can be estimated similarly and then there holds
\begin{align}\label{m1-0}
(1+t)^2M_{1}
&\leq C\epsilon(1+t)^3(\|\nabla^2 u_2\|_{H^1}^2+\|\nabla^2\partial_2 u\|_{L^2}^2+\|\nabla^3 b\|_{H^1}^2+\|\nabla^2\partial_t b\|_{H^1}^2)\nonumber\\&\quad+\frac{C}{4\epsilon}(1+t)(\|\nabla u\|_{H^3}^2+\|\partial_2 u\|_{H^2}^2+\|\nabla b\|_{H^3}^2+\|\partial_t b\|_{H^3}^2).
\end{align}

For $M_{2}$, we use Leibniz's formula to decompose it into the following three terms,
\begin{align}\label{GKKK}
M_2
&=2\gamma\Big(\int_{\R^2}\big((\nabla^2b\cdot\nabla) u+2(\nabla b\cdot\nabla)\nabla u\big)\cdot\partial_t\nabla^2 bdx\nonumber\\
&\quad\quad\quad+\int_{\R^2}\big((\nabla^3b\cdot\nabla) u+3(\nabla^2b\cdot\nabla)\nabla u+3(\nabla b\cdot \nabla)\nabla^2 u\big)\cdot\partial_t\nabla^3 bdx\Big)\nonumber\\
&\quad+2\gamma\int_{\R^2}(b\cdot\nabla) \nabla^3 u\cdot\partial_t\nabla^3 bdx+2\gamma\int_{\R^2}(b\cdot\nabla) \nabla^2 u\cdot\partial_t\nabla^2 bdx\nonumber\\
&:= M_{21}+M_{22}+M_{23}.
\end{align}

By Lemma \ref{2.1}, Young's inequality  and \eqref{stability}, we deduce
\begin{align}\label{H-1}
M_{21}&\leq
C\|\nabla^2b\|_{L^{2}}^{\frac{1}{2}}\|\nabla^2\partial_2b\|_{L^{2}}^{\frac{1}{2}}\|\nabla u\|_{L^{2}}^{\frac{1}{2}}\|\nabla\partial_1 u\|_{L^{2}}^{\frac{1}{2}}\|\partial_t\nabla^2 b\|_{L^2}+C\|\nabla b\|_{L^{\infty}}\|\nabla^2u\|_{L^{2}}\|\partial_t\nabla^2 b\|_{L^2}\nonumber\\
&\quad+C\|\nabla^3b\|_{L^{2}}^{\frac{1}{2}}\|\nabla^3\partial_2b\|_{L^{2}}^{\frac{1}{2}}\|\nabla u\|_{L^{2}}^{\frac{1}{2}}\|\nabla\partial_1 u\|_{L^{2}}^{\frac{1}{2}}\|\partial_t\nabla^3 b\|_{L^2}\nonumber\\
&\quad+
\|\nabla^2 b\|_{L^{2}}^{\frac{1}{2}}\|\nabla^2\partial_2b\|_{L^{2}}^{\frac{1}{2}}\|\nabla^2 u\|_{L^{2}}^{\frac{1}{2}}\|\nabla^2\partial_1 u\|_{L^{2}}^{\frac{1}{2}}\|\partial_t\nabla^3 b\|_{L^2}+C\|\nabla b\|_{L^{\infty}}\|\nabla^3u\|_{L^{2}}\|\partial_t\nabla^3 b\|_{L^2}\nonumber\\
&\leq
C(\|u\|_{H^{2}}+\|b\|_{H^{3}})(\|\nabla^2 u_2\|_{H^{1}}^2+\|\nabla^2\partial_2 u\|_{L^{2}}^2+\|\partial_t\nabla^2 b\|_{H^1}^2+\|\nabla^3 b\|_{H^1}^2)\nonumber\\
&\quad +C\epsilon\|\partial_t\nabla^2 b\|_{L^2}^2+C\|\nabla^2 u\|_{H^{1}}^2\|\nabla b\|_{H^{2}}^2\nonumber\\
&\leq
C\epsilon(\|\nabla^2 u_2\|_{H^{1}}^2+\|\nabla^2\partial_2 u\|_{L^{2}}^2+\|\partial_t\nabla^2 b\|_{H^1}^2+\|\nabla^3 b\|_{H^1}^2)+C\|\nabla^2 u\|_{H^{1}}^2\|\nabla b\|_{H^{2}}^2,
\end{align}
where we have used $\|\nabla^m\partial_1 u\|_{L^{2}}=\|\nabla^m \nabla u_2\|_{L^{2}}$ for $m=1,2$ and $\|\nabla^3 u\|_{L^{2}}=\|\nabla^2\partial_1 u\|_{L^{2}}+\|\nabla^2\partial_2 u\|_{L^{2}}$.

For $M_{22}$, thanks to a similar argument to prove Proposition \ref{A1b} and \eqref{stability}, we have
\begin{align}\label{GGGG}
		M_{22}&\leq
		-2\gamma\frac{d}{dt}\int_{\mathbb{R}^2}
		\big((b\cdot\nabla)\nabla^3 b\cdot\nabla^3 u
		+\alpha\nabla^3 u\cdot\nabla^3\partial_2 b
		+\gamma\partial_t\nabla^3 b\cdot (u\cdot\nabla)\nabla^3 b\big)dx
		\notag\\ &\quad+C\left(\epsilon+\epsilon^2\right)\left(\|\partial_t\nabla^3b\|_{L^2}^2
+\|\nabla^2\partial_1u\|_{L^2}^2+\|\nabla^2\partial_2u\|_{L^2}^2+\|\nabla^3 b\|_{H^1}^2\right)\notag\\
		&\quad+\frac{2\mu}{3}\|\nabla^3 u_2\|_{L^2}^2	+\Bigl(2\gamma\alpha^2+\frac{3\gamma^2\alpha^2\mu}{2}\Bigr)
		\|\nabla^4 b\|_{L^2}^2-2\gamma\alpha\int_{\R^2}\nabla^3\partial_2u\cdot\partial_t\nabla^3 b  dx.
	\end{align}

To deal with  $M_{23}$ we divide into three steps to proceed. We will note that  $M_{23}$ generates many nonlinear terms and involves more significant adjustments relative to proof of Proposition 2.1.

 {\it {Step 1.}} In this step, we  use integration by parts and \eqref{type-wave-1}$_1$ to replace $\partial_tu$, and then get
\begin{align}\label{K}
M_{23}&=-2\gamma\int_{\R^2}(b\cdot\nabla)\partial_t\nabla^2 b\cdot\nabla^2 udx\nonumber\\
&=-2\gamma\frac{d}{dt}\int_{\R^2}(b\cdot\nabla)\nabla^2 b\cdot\nabla^2 udx+2\gamma\int_{\R^2}(\partial_t b\cdot\nabla)\nabla^2 b\cdot\nabla^2 udx\nonumber\\
&\quad+2\gamma\int_{\R^2}( b\cdot\nabla)\nabla^2 b\cdot\nabla^2\partial_t udx\nonumber\\
&=-2\gamma\frac{d}{dt}\int_{\R^2}(b\cdot\nabla)\nabla^2 b\cdot\nabla^2 udx+2\gamma\int_{\R^2}(\partial_t b\cdot\nabla)\nabla^2 b\cdot\nabla^2 udx-2\gamma\int_{\R^2}(b\cdot\nabla)\nabla^2 b\cdot\nabla^3pdx
\nonumber\\
&\quad+2\gamma\int_{\R^2}(b\cdot\nabla)\nabla^2 b\cdot\nabla^2(-\mu (0,u_2)^{T} +\alpha\partial_2 b)dx+2\gamma\int_{\R^2}(b\cdot\nabla)\nabla^2 b\cdot\nabla^2( (b \cdot \nabla) b)dx
\nonumber\\
&\quad-2\gamma\int_{\R^2}(b\cdot\nabla)\nabla^2 b\cdot\nabla^2( (u \cdot \nabla) u)dx.
\end{align}
By Young's inequality, we have
\begin{align}\label{K-1-1}
2\gamma\int_{\R^2}(\partial_tb\cdot\nabla)\nabla^2 b\cdot \nabla^2 u&\leq
C\|\partial_t b\|_{L^{\infty}}\|\nabla^3 b\|_{L^2}\|\nabla^2 u\|_{L^2}\nonumber\\
&\leq C\epsilon\|\nabla^3b\|_{L^2}^2+C\|\nabla^2 u\|_{L^2}^2\|\partial_tb\|_{H^2}^2.
\end{align}
By an argument similar to that in \eqref{SP}-\eqref{SP1} and invoking \eqref{stability}, we obtain
\begin{align*}
&-2\gamma\int_{\R^2}(b\cdot\nabla)\nabla^2 b\cdot\nabla^3 pdx
\nonumber\\&\leq
C\|b\|_{L^\infty}\|\nabla^3 b\|_{L^2}\|\nabla^3 p\|_{L^2}\nonumber\\
&\leq C\|b\|_{H^3}\|\nabla^3 b\|_{L^2}\big(\|\nabla\big(\nabla\cdot(u\cdot\nabla u)\big)\|_{L^2}+\|\nabla\big(\nabla\cdot(b\cdot\nabla b)\big)\|_{L^2}+\|\nabla\partial_2u_2\|_{L^2}\big)\nonumber\\
&\leq C\|b\|_{H^3}\|\nabla^3 b\|_{L^2}\Big(\|\nabla u\|_{L^{2}}^{\frac{1}{2}}\|\nabla\partial_{1} u\|_{L^{2}}^{\frac{1}{2}}
\|\nabla^2u\|_{L^{2}}^{\frac{1}{2}}\|\nabla^2\partial_{2}u\|_{L^{2}}^{\frac{1}{2}}\nonumber\\
&\qquad \ \ \ \ \ \ \ \ \ \ \ \ \ \ \ \ \ \ \ \  +\|\nabla b\|_{L^{2}}^{\frac{1}{2}}\|\nabla\partial_{1}b\|_{L^{2}}^{\frac{1}{2}}
\|\nabla^2b\|_{L^{2}}^{\frac{1}{2}}\|\nabla^2\partial_{2}b\|_{L^{2}}^{\frac{1}{2}}+\|\nabla^2u_2\|_{L^2}\Big)\nonumber\\
&\leq C\|b\|_{H^3}\|\nabla^3 b\|_{L^2}\Big(\|u\|_{H^{3}}(\|\nabla^2u_2\|_{L^{2}}+\|\nabla^2\partial_2u\|_{L^{2}})+\|\nabla^2b\|_{L^{2}}\|\nabla b\|_{H^{2}}+\|\nabla^2 u_2\|_{L^{2}}\Big)\nonumber\\
&\leq C\left(\epsilon+\epsilon^2\right)\left(\|\nabla^2\partial_2 u\|_{L^{2}}^2+\|\nabla^2 u_2\|_{L^2}^2+\|\nabla^3 b\|_{L^2}^2\right)+C\|\nabla^2 b\|_{L^2}^2\|\nabla b\|_{H^2}^2.
\end{align*}

Moreover, in view of \eqref{stability}, we have
\begin{align}\label{K-2}
&2\gamma\int_{\R^2}(b\cdot\nabla) \nabla^2 b\cdot\nabla^2(-\mu (0,u_2)^{T}  +\alpha\partial_2 b)dx
\nonumber\\&\leq
C\|b\|_{L^\infty}\|\nabla^3 b\|_{L^2}\|\nabla^2 u_2\|_{L^2}+C\|b\|_{L^\infty}\|\nabla^3 b\|_{L^2}^2\nonumber\\
&\leq \epsilon\left(\|\nabla^3 b\|_{L^2}^2+\|\nabla^2 u_2\|_{L^2}^2\right)
\end{align}
and
\begin{align}\label{K-21}
&2\gamma\int_{\R^2}(b\cdot\nabla) \nabla^2 b\cdot\nabla^2((b \cdot \nabla) b)dx\nonumber\\
&=2\gamma\int_{\R^2}(b\cdot\nabla)\nabla^2 b\cdot((\nabla^2b \cdot \nabla) b+2(\nabla b \cdot \nabla)\nabla b+(b \cdot \nabla)\nabla^2 b)dx\nonumber\\
&
\leq
C\|b\|_{L^\infty}\|\nabla^3 b\|_{L^2}\|\nabla^2 b\|_{L^2}\|\nabla b\|_{H^2}+C\|b\|_{L^\infty}^2\|\nabla^3 b\|_{L^2}^2\nonumber\\
&\leq C\epsilon^2\|\nabla^3 b\|_{L^2}^2+C\|\nabla^2 b\|_{L^2}^2\|\nabla b\|_{H^2}^2.
\end{align}
By integration by parts, we have
\begin{align}\label{K-3}
&-2\gamma\int_{\R^2}(b\cdot\nabla)\nabla^2 b\cdot\nabla^2( (u \cdot \nabla) u)dx
\nonumber\\
&=-2\gamma\int_{\R^2}(b\cdot\nabla)\nabla^2 b\cdot((\nabla^2u \cdot \nabla) u+2(\nabla u \cdot \nabla)\nabla u+(u \cdot \nabla)\nabla^2 u)dx\nonumber\\
&=-2\gamma\int_{\R^2}(b\cdot\nabla)\nabla^2 b\cdot((\nabla^2u \cdot \nabla) u+2(\nabla u \cdot \nabla)\nabla u)dx+2\gamma\int_{\R^2}((u\cdot\nabla) b\cdot\nabla)\nabla^2b \cdot \nabla^2 udx\nonumber\\
&\quad-2\gamma\int_{\R^2}((b\cdot\nabla)u\cdot\nabla)\nabla^2 b\cdot\nabla^2udx-2\gamma\int_{\R^2}(b\cdot\nabla)\nabla^2 u\cdot(u \cdot \nabla)\nabla^2 bdx\nonumber\\
&\leq
C\|b\|_{L^\infty}\|\nabla^3 b\|_{L^2}\|\nabla^2 u\|_{L^2}\|\nabla u\|_{L^{\infty}}+
C\|u\|_{L^\infty}\|\nabla^3 b\|_{L^2}\|\nabla^2 u\|_{L^2}\|\nabla b\|_{L^{\infty}}\nonumber\\
&\quad-2\gamma\int_{\R^2}(b\cdot\nabla)\nabla^2 u\cdot(u \cdot \nabla)\nabla^2 bdx\nonumber\\
&\leq C\epsilon^2\|\nabla^3b\|_{L^2}^2+C\|\nabla^2 u\|_{L^2}^2\left(\|\nabla u\|_{H^2}^2+\|\nabla b\|_{H^2}^2\right)-2\gamma\int_{\R^2}(b\cdot\nabla)\nabla^2 u\cdot(u \cdot \nabla)\nabla^2 bdx.
\end{align}

Inserting \eqref{K-1-1}-\eqref{K-3} into \eqref{K} implies
\begin{align}\label{K-11}
M_{23}
&\leq -2\gamma\frac{d}{dt}\int_{\R^2}(b\cdot\nabla)\nabla^2 b\cdot\nabla^2 udx-2\gamma\int_{\R^2}(b\cdot\nabla)\nabla^2 u\cdot(u \cdot \nabla)\nabla^2 bdx\nonumber\\
&\quad+C(\epsilon+\epsilon^2)(\|\nabla^3b\|_{L^2}^2+\|\nabla^2\partial_2 u\|_{L^2}^2+\|\nabla^2 u_2\|_{H^1}^2)\nonumber\\
&\quad+C(\|\nabla^2 u\|_{{L^2}}^2+\|\nabla^2b\|_{{L^2}}^2)\left(\|\nabla u\|_{H^2}^2+\|\nabla b\|_{H^2}^2+\|\partial_tb\|_{H^2}^2\right).
\end{align}

 {\it {Step 2.}} In this step, we will deal with the second term  on the right-hand side of \eqref{K-11}. Here, we use \eqref{type-wave-1}$_2$ to replace $(b\cdot \nabla) u$, and integration by parts and then have
\begin{align}\label{K-6}
&-2\gamma\int_{\R^2}(b\cdot \nabla)\nabla^2 u\cdot (u\cdot\nabla)\nabla^2 bdx\nonumber\\
&=-2\gamma\int_{\R^2}\nabla^2 ((b\cdot \nabla) u)\cdot( u\cdot\nabla)\nabla^2 bdx+2\gamma\int_{\R^2}(\nabla^2 b\cdot \nabla) u\cdot (u\cdot\nabla)\nabla^2 bdx\nonumber\\
&\quad+4\gamma\int_{\R^2}(\nabla b\cdot \nabla)\nabla u\cdot (u\cdot\nabla)\nabla^2 bdx\nonumber\\
&=-2\gamma^2\frac{d}{dt}\int_{\R^2}\partial_{t}\nabla^2 b\cdot (u\cdot\nabla)\nabla^2 bdx+2\gamma^2\int_{\R^2}\partial_{t}\nabla^2 b\cdot (\partial_{t}u\cdot\nabla)\nabla^2 bdx\nonumber\\
&\quad-2\gamma\int_{\R^2}\partial_{t}\nabla^2b\cdot (u\cdot\nabla )\nabla^2 bdx+2\gamma\eta\int_{\R^2}\nabla^2\Delta b\cdot (u\cdot\nabla)\nabla^2 bdx\nonumber\\
&\quad+2\gamma\alpha\int_{\R^2}(\partial_2u\cdot\nabla)\nabla^2 u\cdot\nabla^2 bdx+2\gamma\alpha\int_{\R^2}(u\cdot\nabla)\nabla^2 u\cdot\nabla^2\partial_2 bdx\nonumber\\
&\quad-2\gamma\int_{\R^2}\nabla^2((u\cdot\nabla) b)\cdot (u\cdot\nabla)\nabla^2 bdx+2\gamma\int_{\R^2}(\nabla^2 b\cdot \nabla) u\cdot (u\cdot\nabla)\nabla^2 bdx\nonumber\\
&\quad+4\gamma\int_{\R^2}(\nabla b\cdot \nabla)\nabla u\cdot (u\cdot\nabla)\nabla^2 bdx.
\end{align}

By making use of \eqref{stability} and \eqref{A8-8} we then get
\begin{align}\label{K-7}
&2\gamma^2\int_{\R^2}\partial_{t}\nabla^2 b\cdot (\partial_{t}u\cdot\nabla)\nabla^2 bdx-2\gamma\int_{\R^2}\partial_{t}\nabla^2b\cdot (u\cdot\nabla )\nabla^2 bdx\nonumber\\
&\quad+2\gamma\eta\int_{\R^2}\nabla^2\Delta b\cdot (u\cdot\nabla)\nabla^2 bdx\nonumber\\
&\leq C(\|\partial_{t} u\|_{L^{\infty}}+\| u\|_{L^{\infty}})(\|\partial_t\nabla^2 b\|_{L^{2}}^2+\|\nabla^3b\|_{H^{1}}^2)\nonumber\\
&\leq  C(\epsilon+\epsilon^2)\left(\|\nabla^3 b\|_{H^1}^2+\|\partial_t\nabla^2 b\|_{L^{2}}^2\right).
\end{align}

Moreover, there holds
\begin{align}\label{K-9}
2\gamma\alpha\int_{\R^2}(\partial_2u\cdot\nabla)\nabla^2 u\cdot\nabla^2 bdx
 &\leq
C\|\nabla^3 u\|_{L^2}\|\nabla^2 b\|_{L^2}^{\frac{1}{2}}\|\nabla^2\partial_2 b\|_{L^2}^{\frac{1}{2}}\|\partial_2 u\|_{L^2}^{\frac{1}{2}}\|\partial_1\partial_2 u\|_{L^2}^{\frac{1}{2}}\nonumber\\
&\leq C\epsilon\left(\|\nabla^3 b\|_{L^2}^2+\|\nabla^2\partial_2 u\|_{L^2}^2+\|\nabla^2 u_2\|_{H^1}^2\right).
\end{align}

Like \eqref{K-3}, one can  derive that
\begin{align}\label{K-8}
&-2\gamma\int_{\R^2}\nabla^2((u\cdot\nabla) b)\cdot (u\cdot\nabla)\nabla^2 bdx+2\gamma\int_{\R^2}(\nabla^2 b\cdot \nabla) u\cdot (u\cdot\nabla)\nabla^2 bdx\nonumber\\
&\quad+4\gamma\int_{\R^2}(\nabla b\cdot \nabla)\nabla u\cdot (u\cdot\nabla)\nabla^2 bdx\nonumber\\
&=-2\gamma\int_{\R^2}(\nabla^2 u\cdot\nabla) b\cdot (u\cdot\nabla)\nabla^2 bdx-4\gamma\int_{\R^2}(\nabla u\cdot\nabla) \nabla b\cdot (u\cdot\nabla)\nabla^2 bdx\nonumber\\&\quad-2\gamma\int_{\R^2}( u\cdot\nabla) \nabla^2 b\cdot (u\cdot\nabla)\nabla^2 bdx+2\gamma\int_{\R^2}(\nabla^2 b\cdot \nabla) u\cdot (u\cdot\nabla)\nabla^2 bdx\nonumber\\
&\quad+4\gamma\int_{\R^2}(\nabla b\cdot \nabla)\nabla u\cdot (u\cdot\nabla)\nabla^2 bdx\nonumber\\
&\leq C\epsilon^2\|\nabla^3b\|_{L^2}^2+C(\|\nabla^2u\|_{L^2}^2+\|\nabla^2 b\|_{L^2}^2)(\|\nabla u\|_{H^2}^2+\|\nabla b\|_{H^2}^2).
\end{align}

Consequently, by submitting \eqref{K-7}-\eqref{K-8} into \eqref{K-6} one has
\begin{align}\label{K-1}
&-2\gamma\int_{\R^2}(b\cdot\nabla)\nabla^2 u\cdot(u \cdot \nabla)\nabla^2 bdx\nonumber\\
&\leq -2\gamma^2\frac{d}{dt}\int_{\R^2}\partial_{t}\nabla^2 b\cdot (u\cdot\nabla)\nabla^2 bdx+2\gamma\alpha\int_{\R^2}(u\cdot\nabla)\nabla^2 u\cdot\nabla^2\partial_2 bdx \nonumber\\
&\quad+C(\epsilon+\epsilon^2)(\|\nabla^3b\|_{H^1}^2+\|\nabla^2\partial_2 u\|_{L^2}^2+\|\nabla^2 u_2\|_{H^1}^2+\|\partial_t\nabla^2 b\|_{L^{2}}^2)\nonumber\\
&\quad+C(\|\nabla^2u\|_{L^2}^2+\|\nabla^2 b\|_{L^2}^2)\left(\|\nabla u\|_{H^2}^2+\|\nabla b\|_{H^2}^2\right).
\end{align}

{\it {Step 3.}} In this step, to deal with the second term  on the right-hand side of \eqref{K-1}, we substitute $(u\cdot \nabla) u$ by using equation \eqref{type-wave-1}$_1$, utilize integration by parts and then get
\begin{align}\label{Part III}
&2\gamma\alpha\int_{\R^2}(u\cdot\nabla)\nabla^2 u\cdot\nabla^2\partial_2 bdx\nonumber\\
&=2\gamma\alpha\int_{\R^2}\nabla^2 ((u\cdot\nabla)u)\cdot\nabla^2\partial_2 bdx-2\gamma\alpha\int_{\R^2}(\nabla^2 u\cdot\nabla)u\cdot\nabla^2\partial_2 bdx\nonumber\\
&\quad-4\gamma\alpha\int_{\R^2}(\nabla u\cdot\nabla)\nabla u\cdot\nabla^2\partial_2 bdx\nonumber\\
&=-2\gamma\alpha\frac{d}{dt}\int_{\R^2}\nabla^2 u\cdot\nabla^2\partial_2 bdx-2\gamma\alpha\int_{\R^2}\nabla^2\partial_2u\cdot\partial_t\nabla^2 bdx+2\gamma\alpha\int_{\R^2}\nabla^2((b\cdot\nabla )b)\cdot \nabla^2\partial_2 bdx\nonumber\\
&\quad-2\gamma\alpha\mu\int_{\R^2}\nabla^2(0,u_2)^{T}\cdot\nabla^2\partial_2 bdx+2\gamma\alpha^2\|\nabla^2 \partial_2b\|_{L^2}^2-2\gamma\alpha\int_{\R^2}(\nabla^2 u\cdot\nabla)u\cdot\nabla^2\partial_2 bdx\nonumber\\
&\quad-4\gamma\alpha\int_{\R^2}(\nabla u\cdot\nabla)\nabla u\cdot\nabla^2\partial_2 bdx.
\end{align}

Applying Young's inequality and \eqref{stability}, we infer that
\begin{align}\label{Part III1}
&2\gamma\alpha\int_{\R^2}\nabla^2((b\cdot\nabla )b)\cdot\nabla^2\partial_2 bdx\nonumber\\&
=2\gamma\alpha\int_{\R^2}((\nabla^2 b\cdot\nabla) b+2(\nabla b\cdot\nabla)\nabla b+( b\cdot\nabla)\nabla^2 b)\cdot\nabla^2\partial_2 bdx\nonumber\\
&\leq C\|\nabla^3 b\|_{L^2}\|\nabla b\|_{L^{\infty}}\|\nabla^2 b\|_{L^2}+C\|b\|_{L^{\infty}}\|\nabla^3 b\|_{L^{2}}^2\nonumber\\&\leq C\epsilon\|\nabla^3 b\|_{L^{2}}^2+C\|\nabla^2 b\|_{L^{2}}^2\|\nabla b\|_{H^{2}}^2,
\end{align}
and
 \begin{align}\label{Part III3}
-2\gamma\mu\alpha\int_{\R^2}\nabla^2 (0,u_2)^{T}\cdot\nabla^2\partial_2 bdx
\leq \frac{2\mu}{3}\| \nabla^2 u_2\|_{L^2}^2+\frac{3\gamma^2\alpha^2\mu}{2}\|\nabla^3 b\|_{L^2}^2.
\end{align}

Due to  the anisotropic inequality \eqref{AN}, we have
\begin{align}\label{Part III2}
&-2\gamma\alpha\int_{\R^2}(\nabla^2 u\cdot\nabla)u\cdot\nabla^2\partial_2 bdx-4\gamma\alpha\int_{\R^2}(\nabla u\cdot\nabla)\nabla u\cdot\nabla^2\partial_2 bdx\nonumber\\
&\leq C\|\nabla^3 b\|_{L^2}\|\nabla u\|_{L^2}^{\frac{1}{2}}\|\nabla\partial_1 u\|_{L^2}^{\frac{1}{2}}\|\nabla^2 u\|_{L^2}^{\frac{1}{2}}\|\nabla^2\partial_2 u\|_{L^2}^{\frac{1}{2}}\nonumber\\ &\leq C\|u\|_{H^{2}}(\|\nabla^3 b\|_{L^{2}}^2+\|\nabla^2\partial_2 u\|_{L^{2}}^2+\|\nabla\partial_1 u\|_{L^{2}}^2).
\end{align}

Thus, it follows from \eqref{Part III}-\eqref{Part III2}  and \eqref{stability} that
\begin{align*}
&2\gamma\alpha\int_{\R^2}(u\cdot\nabla)\nabla^2 u\cdot\nabla^2\partial_2 bdx\nonumber\\
&\leq-2\gamma\alpha\frac{d}{dt}\int_{\R^2}\nabla^2 u\cdot\nabla^2\partial_2 bdx-2\gamma\alpha\int_{\R^2}\nabla^2\partial_2u\cdot\partial_t\nabla^2 bdx\nonumber\\
&\quad+ \frac{2\mu}{3}\| \nabla^2 u_2\|_{L^2}^2+(\frac{3\gamma^2\alpha^2\mu}{2}+2\gamma\alpha^2)\|\nabla^3 b\|_{L^2}^2 \nonumber\\
&\quad+C\epsilon(\|\nabla^3b\|_{H^1}^2+\|\nabla^2 \partial_2u\|_{L^2}^2+\|\nabla^2 u_2\|_{H^1}^2)+C\|\nabla^2 b\|_{L^2}^2\|\nabla b\|_{H^2}^2.
\end{align*}
From {\it Step 1-Step 3 }, we have
\begin{align}\label{KKKK}
M_{23}&\leq
-2\gamma\frac{d}{dt}\int_{\R^2}((b\cdot\nabla)\nabla^2 b\cdot\nabla^2 udx+\alpha\nabla^2 u\cdot\nabla^2\partial_2 b+\gamma\partial_{t}\nabla^2 b\cdot (u\cdot\nabla)\nabla^2 b)dx\nonumber\\
&\quad+\frac{2\mu}{3}\| \nabla^2 u_2\|_{L^2}^2+(\frac{3\gamma^2\alpha^2\mu}{2}+2\gamma\alpha^2)\|\nabla^3 b\|_{L^2}^2-2\gamma\alpha\int_{\R^2}\nabla^2\partial_2u\cdot\partial_t\nabla^2 bdx\nonumber\\
&\quad+C(\|\nabla^2u\|_{L^2}^2+\|\nabla^2 b\|_{L^2}^2)\left(\|\nabla u\|_{H^2}^2+\|\nabla b\|_{H^2}^2+\|\partial_tb\|_{H^3}^2\right)\nonumber\\
&\quad+C\left(\epsilon+\epsilon^2\right)\left(\|\nabla^2 u_2\|_{H^{1}}^2+\|\nabla^2\partial_2 u\|_{L^{2}}^2+\|\partial_t\nabla^2 b\|_{H^1}^2+\|\nabla^3 b\|_{H^1}^2\right).
\end{align}

Based on \eqref{GKKK}-\eqref{GGGG} and \eqref{KKKK},  we have
\begin{align}\label{KKKKKK1}
&(1+t)^3(M_{2}+M_{3})\nonumber\\&\leq
-2\gamma(1+t)^3\frac{d}{dt}\int_{\R^2}\big((b\cdot\nabla)\nabla^2 b\cdot\nabla^2 u+\alpha\nabla^2 u\cdot\nabla^2\partial_2 b+\gamma\partial_{t}\nabla^2 b\cdot (u\cdot\nabla)\nabla^2 b\big)dx\nonumber\\
&\quad-2\gamma(1+t)^3\frac{d}{dt}\int_{\R^2}\big((b\cdot\nabla)\nabla^3 b\cdot\nabla^3 u+\alpha\nabla^3 u\cdot\nabla^3\partial_2 b+\gamma\partial_{t}\nabla^3 b\cdot (u\cdot\nabla)\nabla^3 b\big)dx\nonumber\\
&\quad+\frac{2\mu}{3}(1+t)^3\| \nabla^2 u_2\|_{H^1}^2+(\frac{3\gamma^2\alpha^2\mu}{2}+2\gamma\alpha^2)(1+t)^3\|\nabla^3 b\|_{H^1}^2\nonumber\\
&\quad+C(1+t)^3(\|\nabla^2u\|_{H^1}^2+\|\nabla^2 b\|_{H^1}^2+\|\partial_t\nabla^2 b\|_{H^1}^2)\left(\|\nabla u\|_{H^2}^2+\|\nabla b\|_{H^2}^2+\|\partial_tb\|_{H^2}^2\right)\nonumber\\
&\quad+C\left(\epsilon+\epsilon^2\right)(1+t)^3\left(\|\nabla^2 u_2\|_{H^{1}}^2+\|\nabla^2\partial_2 u\|_{L^{2}}^2+\|\partial_t\nabla^2 b\|_{H^1}^2+\|\nabla^3 b\|_{H^1}^2\right).
\end{align}

Next, we deal with the first two terms on the right side of \eqref{KKKKKK1}, a direct calculation yields
\begin{align}\label{T1}
&-2\gamma(1+t)^3\frac{d}{dt}\int_{\R^2}\big((b\cdot\nabla)\nabla^2 b\cdot\nabla^2 u+\alpha\nabla^2 u\cdot\nabla^2\partial_2b+\gamma\partial_{t}\nabla^2 b\cdot (u\cdot\nabla)\nabla^2 b\big)dx\nonumber\\
&=-2\gamma\frac{d}{dt}(1+t)^3\int_{\R^2}\big((b\cdot\nabla)\nabla^2 b\cdot\nabla^2 u+\alpha\nabla^2 u\cdot\nabla^2\partial_2 b+\gamma\partial_{t}\nabla^2 b\cdot (u\cdot\nabla)\nabla^2 b\big)dx\nonumber\\
 &\quad+6\gamma(1+t)^2\int_{\R^2}\big((b\cdot\nabla)\nabla^2 b\cdot\nabla^2 u+\alpha\nabla^2 u\cdot\nabla^2\partial_2b+\gamma\partial_{t}\nabla^2 b\cdot (u\cdot\nabla)\nabla^2 b\big)dx
\end{align}
and
\begin{align}\label{T2}
&-2\gamma(1+t)^3\frac{d}{dt}\int_{\R^2}\big((b\cdot\nabla)\nabla^3 b\cdot\nabla^3 u+\alpha\nabla^3 u\cdot\nabla^3\partial_2 b+\gamma\partial_{t}\nabla^3 b\cdot (u\cdot\nabla)\nabla^3 b\big)dx\nonumber\\
&=-2\gamma\frac{d}{dt}(1+t)^3\int_{\R^2}\big((b\cdot\nabla)\nabla^3 b\cdot\nabla^3 u+\alpha\nabla^3 u\cdot\nabla^3\partial_2 b+\gamma\partial_{t}\nabla^3 b\cdot (u\cdot\nabla)\nabla^3 b\big)dx\nonumber\\
 &\quad+6\gamma(1+t)^2\int_{\R^2}\big((b\cdot\nabla)\nabla^3 b\cdot\nabla^3 u+\alpha\nabla^3 u\cdot\nabla^3\partial_2b+\gamma\partial_{t}\nabla^3 b\cdot (u\cdot\nabla)\nabla^3 b\big)dx.
\end{align}
Noticing that by using Young's inequality, Sobolev's inequality and \eqref{stability}, we can obtain
\begin{align}\label{T3}
&6\gamma(1+t)^2\int_{\R^2}\big((b\cdot\nabla)\nabla^2 b\cdot\nabla^2 u+\alpha\nabla^2 u\cdot\nabla^2\partial_2 b+\gamma\partial_{t}\nabla^2 b\cdot (u\cdot\nabla)\nabla^2 b\big)dx\nonumber\\
&\leq6\gamma(1+t)^2\|b\|_{L^\infty}\|\nabla^3 b\|_{L^2}\|\nabla^2 u\|_{L^2} +6\gamma\alpha\left((1+t)\|\nabla^2 u\|_{L^2}^2\right)^{\frac{1}{2}}\left((1+t)^3\|\nabla^3 b\|_{L^2}^2\right)^{\frac{1}{2}}\nonumber\\
&\quad+6\gamma^2(1+t)^2\|u\|_{L^\infty}\|\nabla^3 b\|_{L^2}\|\partial_t\nabla^2 b\|_{L^2} \nonumber\\
&\leq C\epsilon(1+t)^3(\|\nabla^3 b\|_{L^2}^2+\|\partial_t\nabla^2 b\|_{L^2}^2) +\frac{C}{4\epsilon}(1+t)\|\nabla^2u\|_{L^2}^2+\frac{C}{4\epsilon}(1+t)^3\|\nabla^2 u\|_{L^2}^2\|b\|_{H^2}^2
\end{align}
and
\begin{align}\label{T4}
&6\gamma(1+t)^2\int_{\R^2}\big((b\cdot\nabla)\nabla^3 b\cdot\nabla^3 u+\alpha\nabla^3 u\cdot\nabla^3\partial_2 b+\gamma\partial_{t}\nabla^3 b\cdot (u\cdot\nabla)\nabla^3 b\big)dx\nonumber\\
&\leq6\gamma(1+t)^2\|b\|_{L^\infty}\|\nabla^3 b\|_{H^1}\|\nabla^3 u\|_{L^2} +6\gamma\alpha\left((1+t)\|\nabla^3 u\|_{L^2}^2\right)^{\frac{1}{2}}\left((1+t)^3\|\nabla^3 b\|_{H^1}^2\right)^{\frac{1}{2}}\nonumber\\
&\quad+6\gamma^2(1+t)^2\|u\|_{L^\infty}\|\nabla^3 b\|_{H^1}\|\partial_t\nabla^2 b\|_{H^1} \nonumber\\
&\leq C\epsilon(1+t)^3(\|\nabla^3 u\|_{L^2}^2+\|\nabla^3 b\|_{H^1}^2+\|\partial_t\nabla^2 b\|_{H^1}^2) +\frac{C}{4\epsilon}(1+t)\|\nabla^3u\|_{L^2}^2.
\end{align}

Putting \eqref{T1}-\eqref{T4} into \eqref{KKKKKK1} immediately implies
\begin{align*}
&(1+t)^3(M_{2}+M_{3})\nonumber\\&\leq
-2\gamma\frac{d}{dt}(1+t)^3\sum_{m=2}^3\int_{\mathbb{R}^2}
		\big((b\cdot\nabla)\nabla^m b\cdot\nabla^m u
		+\alpha\nabla^m u\cdot\nabla^m\partial_2 b
		+\gamma\nabla^m\partial_t b\cdot (u\cdot\nabla)\nabla^m b\big)dx\nonumber\\
&\quad+\frac{2\mu}{3}(1+t)^3\| \nabla^2 u_2\|_{H^1}^2+(\frac{3\gamma^2\alpha^2\mu}{2}+2\gamma\alpha^2)(1+t)^3\|\nabla^3 b\|_{H^1}^2\nonumber\\
&\quad+C\left(\epsilon+\epsilon^2\right)(1+t)^3\left(\|\nabla^2 u_2\|_{H^{1}}^2+\|\nabla^2\partial_2 u\|_{L^{2}}^2+\|\partial_t\nabla^2 b\|_{H^1}^2+\|\nabla^3 b\|_{H^1}^2\right)+\frac{C}{4\epsilon}(1+t)\|\nabla u\|_{H^2}^2\nonumber\\&\quad+C(1+t)^3(\|\nabla^2u\|_{H^1}^2+\|\nabla^2 b\|_{H^1}^2+\|\nabla^2\partial_t b\|_{H^1}^2)\left(\|\nabla u\|_{H^2}^2+\|b\|_{H^3}^2+\|\partial_tb\|_{H^2}^2\right).
\end{align*}

For $M_{4}$, it follows similarly from $M_{21}$ that
\begin{align*}
M_{4}&\leq
C\epsilon(\|\nabla^2 u_2\|_{H^{1}}^2+\|\nabla^2\partial_2 u\|_{L^{2}}^2+\|\partial_t\nabla^2 b\|_{H^1}^2+\|\nabla^3 b\|_{H^1}^2)+C\|\nabla^2 u\|_{H^{1}}^2\|\nabla b\|_{H^{2}}^2.
\end{align*}

For $M_5$, it follows from Lemma \ref{2.1} and Young's inequality that
\begin{align*}
M_5&=-\int_{\R^2}\big((\nabla^2u\cdot\nabla) u+2(\nabla u\cdot\nabla)\nabla u\big)\cdot\nabla^2udx\nonumber\\&\quad-\int_{\R^2}\big((\nabla^3u\cdot\nabla) u+3(\nabla^2u\cdot\nabla)\nabla u+3(\nabla u\cdot\nabla)\nabla^2 u\big)\cdot\nabla^3udx
\nonumber\\&\leq C\|\nabla^2u\|_{L^2}^{\frac{1}{2}}\|\nabla^2\partial_{2}u\|_{L^2}^{\frac{1}{2}}\|\nabla u\|_{L^2}^{\frac{1}{2}}\|\nabla\partial_{1}u\|_{L^2}^{\frac{1}{2}}\|\nabla^2u\|_{L^2}\nonumber\\&\quad+C\|\nabla^2u\|_{L^2}^{\frac{1}{2}}\|\nabla^2\partial_{1}u\|_{L^2}^{\frac{1}{2}}
\|\nabla^2 u\|_{L^2}^{\frac{1}{2}}\|\nabla^2\partial_{2} u\|_{L^2}^{\frac{1}{2}}\|\nabla^3u\|_{L^2}+C\|\nabla u\|_{L^{\infty}}\|\nabla^3u\|_{L^2}^2\nonumber\\&
\leq C\epsilon(\|\nabla^2 u_2\|_{H^{1}}^2+\|\nabla^2\partial_2 u\|_{L^{2}}^2)+C\|\nabla^2 u\|_{H^{1}}^2\|\nabla u\|_{H^{2}}^2.
\end{align*}

By integration by parts and $\nabla\cdot b=0$, we find
\begin{align*}
M_{6}+M_{7}=M_{61}+M_{62},
\end{align*}
where \begin{align*}
M_{61}&:=
\int_{\R^2}(\nabla^2b\cdot\nabla) b\cdot\nabla^2udx+
2\int_{\R^2}(\nabla b\cdot\nabla)\nabla b\cdot\nabla^2udx
\nonumber\\
&\quad+\int_{\R^2}(\nabla^2b\cdot\nabla) u\cdot\nabla^2bdx+
2\int_{\R^2}(\nabla b\cdot\nabla)\nabla u\cdot\nabla^2bdx\\
&\leq
C\|\nabla^2 b\|_{L^2}^{\frac{1}{2}}\|\nabla^2\partial_1b\|_{L^2}^{\frac{1}{2}}\|\nabla^2 u\|_{L^2}^{\frac{1}{2}}
\|\nabla^2\partial_2u\|_{L^2}^{\frac{1}{2}}\|\nabla b\|_{L^2}\nonumber\\
&\quad+
C\|\nabla^2 b\|_{L^2}^{\frac{1}{2}}\|\nabla^2\partial_1b\|_{L^2}^{\frac{1}{2}}\|\nabla^2 b\|_{L^2}^{\frac{1}{2}}
\|\nabla^2\partial_2b\|_{L^2}^{\frac{1}{2}}\|\nabla u\|_{L^2}\nonumber\\
&\leq
C\epsilon(\|\nabla^3b\|_{H^1}^2+\|\nabla^2\partial_2u\|_{L^2}^2)+C(\|\nabla^2 u\|_{L^2}^2+\|\nabla^2 b\|_{L^2}^2)(\|\nabla u\|_{L^2}^2+\|\nabla b\|_{L^2}^2)
\end{align*}
and
\begin{align*}
M_{62}&:=
3\int_{\R^2}(\nabla b\cdot\nabla)\nabla^2b\cdot\nabla^3udx+
3\int_{\R^2}(\nabla^2b\cdot\nabla)\nabla b\cdot\nabla^3udx
+\int_{\R^2}(\nabla^3b\cdot\nabla) b\cdot\nabla^3udx\nonumber\\
&\quad+
3\int_{\R^2}(\nabla b\cdot\nabla)\nabla^2u\cdot\nabla^3bdx+
3\int_{\R^2}(\nabla^2b\cdot\nabla)\nabla u\cdot\nabla^3bdx
+\int_{\R^2}(\nabla^3b\cdot\nabla) u\cdot\nabla^3bdx\nonumber\\
&\leq C\|\nabla b\|_{L^\infty}\|\nabla^3b\|_{L^2}\|\nabla^3u\|_{L^2}+
C\|\nabla^2 b\|_{L^2}^{\frac{1}{2}}\|\nabla^2\partial_1b\|_{L^2}^{\frac{1}{2}}\|\nabla^2b\|_{L^2}^{\frac{1}{2}}
\|\nabla^2\partial_2b\|_{L^2}^{\frac{1}{2}}\|\nabla^3u\|_{L^2}\nonumber\\
&\quad+C\|\nabla^2 b\|_{L^2}^{\frac{1}{2}}\|\nabla^2\partial_1b\|_{L^2}^{\frac{1}{2}}\|\nabla^2u\|_{L^2}^{\frac{1}{2}}
\|\nabla^2\partial_2u\|_{L^2}^{\frac{1}{2}}\|\nabla^3b\|_{L^2}+C\|\nabla u\|_{L^\infty}\|\nabla^3b\|_{L^2}^2\nonumber\\
&\leq
C\|(u,b)\|_{H^3}(\|\nabla^3b\|_{H^1}^2+\|\nabla^2u_2\|_{H^1}^2+\|\partial_2\nabla^2u\|_{L^2}^2).
\end{align*}
According to \eqref{stability}, we then have
\begin{align*}
M_{6}+M_7
&\leq C\epsilon(\|\nabla^3b\|_{H^1}^2+\|\nabla^2u_2\|_{H^1}^2+\|\nabla^2\partial_2u\|_{L^2}^2)\nonumber\\
&\quad+C(\|\nabla^2 u\|_{L^2}^2+\|\nabla^2 b\|_{L^2}^2)(\|\nabla b\|_{L^2}^2+\|\nabla u\|_{L^2}^2).
\end{align*}

Similarly, there has
\begin{align}\label{H-4}
M_8&\leq C\epsilon(\|\nabla^3b\|_{H^1}^2+\|\nabla^2u_2\|_{H^1}^2+\|\nabla^2\partial_2u\|_{L^2}^2)\nonumber\\
&\quad+C(\|\nabla^2 u\|_{L^2}^2+\|\nabla^2 b\|_{L^2}^2)(\|\nabla b\|_{L^2}^2+\|\nabla u\|_{L^2}^2).
\end{align}

Collecting all the estimates of $M_1$ in \eqref{m1-0} through $M_8$ in \eqref{H-4}, we conclude that \eqref{boundE} holds.
\end{proof}

The following lemma focuses on $$(1+t)^3\|\nabla^2\partial_2u\|_{L^2}^2.$$
\begin{Lem}\label{keyy1}
Assume that $(u,b)$ is a smooth solution to \eqref{type-wave-1}, then we have
\begin{align}\label{keyE}
&-\gamma\frac{d}{dt}(1+t)^3(\partial_t\nabla^2b,\nabla^2\partial_2u)+(\frac{\alpha}{2}-C\epsilon)(1+t)^3\|\nabla^2\partial_2u\|_{L^2}^2\nonumber\\
& \leq(1+t)^3\Big((\frac{27\gamma^2}{2\alpha}+\frac{\gamma(\mu+\alpha)}{2}+\frac{3}{2\alpha}+C\epsilon)\|\partial_t \nabla^2 b\|_{L^2}^2+ (\frac{3\eta^2}{2\alpha}+\frac{\gamma\alpha}{2}+C\epsilon)\|\nabla^3 b\|_{H^1}^2\nonumber\\
&\quad\qquad\qquad+(\frac{\gamma\mu}{2}+C\epsilon)\|\nabla^2u_2\|_{H^1}^2+\frac{1}{\epsilon}\|\nabla^2 u\|_{H^{1}}^2\|\nabla b\|_{H^{1}}^2\Big),
\end{align}
\end{Lem}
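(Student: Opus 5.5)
We follow the time-weighted analogue of Step II in the proof of Proposition \ref{Pro22} and of Lemma \ref{key4}. Apply $\nabla^2$ to \eqref{type-wave-1}$_2$, take the $L^2$ inner product with $\nabla^2\partial_2u$, and multiply by $(1+t)^3$. This gives
\begin{align*}
&-\gamma\frac{d}{dt}(1+t)^3(\partial_t\nabla^2b,\nabla^2\partial_2u)+\alpha(1+t)^3\|\nabla^2\partial_2u\|_{L^2}^2\\
&=-3\gamma(1+t)^2(\partial_t\nabla^2b,\nabla^2\partial_2u)-\gamma(1+t)^3(\partial_t\nabla^2b,\partial_t\nabla^2\partial_2u)+(1+t)^3(\partial_t\nabla^2b,\nabla^2\partial_2u)\\
&\quad-\eta(1+t)^3(\nabla^2\Delta b,\nabla^2\partial_2u)+(1+t)^3(\nabla^2(u\cdot\nabla b),\nabla^2\partial_2u)-(1+t)^3(\nabla^2(b\cdot\nabla u),\nabla^2\partial_2u).
\end{align*}
Here we use the ordering in which the term $(\partial_tb,\cdot)$ from $\partial_tb$ in the equation stays undifferentiated. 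We do not integrate it in time as $\frac{d}{dt}(\nabla^2 b,\nabla^2\partial_2u)$. This is why only one boundary term appears in \eqref{keyE}. Each of the six terms is then bounded so that at most $\frac{\alpha}{2}+C\epsilon$ of $(1+t)^3\|\nabla^2\partial_2u\|_{L^2}^2$ is consumed.

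The linear terms are handled by Young's inequality with explicit constants.
\begin{itemize}
\item For the first term, use $(1+t)^2\le(1+t)^3$ and split as $\frac{27\gamma^2}{2\alpha}\|\partial_t\nabla^2b\|_{L^2}^2+\frac{\alpha}{6}\|\nabla^2\partial_2u\|_{L^2}^2$.
\item The third term gives $\frac{3}{2\alpha}\|\partial_t\nabla^2b\|^2_{L^2}+\frac{\alpha}{6}\|\nabla^2\partial_2u\|^2_{L^2}$.
\item The diffusion term gives $\frac{3\eta^2}{2\alpha}\|\nabla^4 b\|_{L^2}^2+\frac{\alpha}{6}\|\nabla^2\partial_2u\|^2_{L^2}$.
\end{itemize}
These three choices together consume exactly $\frac{\alpha}{2}$ and produce the stated constants.

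The second term contains $\partial_t\nabla^2\partial_2u$, which lies above the available regularity. We treat it exactly as $J_1$ in \eqref{JJJ1}. Substitute $\partial_tu$ from \eqref{type-wave-1}$_1$; the pressure drops out after the Leray projection, or one can estimate it as in \eqref{SP}. The linear pieces $\alpha\partial_2b$ and $\mu(0,u_2)^T$ give
\[
\frac{\gamma(\alpha+\mu)}{2}\|\partial_t\nabla^2b\|^2_{L^2}+\frac{\gamma\alpha}{2}\|\nabla^4b\|^2_{L^2}+\frac{\gamma\mu}{2}\|\nabla^3u_2\|^2_{L^2}.
\]
The nonlinear pieces $\nabla^2\partial_2(b\cdot\nabla b)$ and $\nabla^2\partial_2(u\cdot\nabla u)$ are paired with $\partial_t\nabla^2 b$. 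For the top-order part $(u\cdot\nabla)\nabla^3u$ we move $\partial_2$ onto $\partial_t\nabla^2b$, as in $J_{13}$. We then use Lemma \ref{2.1} and \eqref{stability}, which gives $\|(u,b,\partial_t b)\|_{H^3}\le C\epsilon$. This yields $C\epsilon$ times the dissipative quantities $\|\partial_t\nabla^2b\|_{H^1}^2$, $\|\nabla^3b\|^2_{H^1}$, $\|\nabla^2u_2\|^2_{H^1}$ and $\|\nabla^2\partial_2u\|^2_{L^2}$. In the process we write $\|\nabla^3u\|_{L^2}\lesssim\|\nabla^2\partial_1u\|_{L^2}+\|\nabla^2\partial_2u\|_{L^2}$ and use $\|\nabla^2\partial_1u\|_{L^2}=\|\nabla^3u_2\|_{L^2}$.

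The main obstacle is the last two nonlinear terms $(\nabla^2(u\cdot\nabla b),\nabla^2\partial_2u)$ and $(\nabla^2(b\cdot\nabla u),\nabla^2\partial_2u)$. With time weight $(1+t)^3$, lower-order factors such as $\nabla^2u$ or $\nabla b$ carry less decay than the leading ones, so one cannot simply put $C\epsilon$ in front. Expand each by Leibniz's rule. The top-order pieces $u\cdot\nabla\nabla^2b$ and $b\cdot\nabla\nabla^2u$ are estimated by $\|(u,b)\|_{L^\infty}\le C\epsilon$ times $\|\nabla^3b\|_{L^2}\|\nabla^2\partial_2u\|_{L^2}$ or $\|\nabla^3u\|_{L^2}\|\nabla^2\partial_2u\|_{L^2}$, which is fine. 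The middle terms, such as $(\nabla^2u\cdot\nabla)b$, $(\nabla u\cdot\nabla)\nabla b$ and $(\nabla^2b\cdot\nabla)u$, are estimated with Lemma \ref{2.1} as
\[
\|\nabla^2 u\|^{\frac12}_{L^2}\|\nabla^2\partial_1u\|^{\frac12}_{L^2}\|\nabla b\|^{\frac12}_{L^2}\|\nabla\partial_2b\|^{\frac12}_{L^2}\|\nabla^2\partial_2u\|_{L^2}
\]
and analogues. Then apply Young's inequality in the form $\epsilon\|\nabla^2\partial_2u\|^2_{L^2}+\frac1\epsilon\|\nabla^2u\|^2_{H^1}\|\nabla b\|^2_{H^1}$. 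This is precisely the quartic remainder $\frac1\epsilon\|\nabla^2u\|^2_{H^1}\|\nabla b\|_{H^1}^2$ allowed in \eqref{keyE}; it is later integrated using the decay already given by Proposition \ref{Prop1}. Collecting all the bounds and absorbing the $\frac{\alpha}{2}+C\epsilon$ portion into the left side gives \eqref{keyE}.
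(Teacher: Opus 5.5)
Your proposal is correct and follows the paper's proof essentially line by line. It uses the same $(1+t)^3$-weighted identity from pairing $\nabla^2$ of \eqref{type-wave-1}$_2$ with $\nabla^2\partial_2u$, and the same three Young splits, each consuming $\frac{\alpha}{6}\|\nabla^2\partial_2u\|_{L^2}^2$. It also treats $J_1$ through \eqref{JJJ1}, and $J_3+J_5$ through Lemma \ref{2.1} to produce the quartic remainder $\frac{1}{\epsilon}\|\nabla^2u\|_{H^1}^2\|\nabla b\|_{H^1}^2$.

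One small precision concerns the pieces $(\nabla u\cdot\nabla)\nabla b$ and $(\nabla^2b\cdot\nabla)u$. They cannot be put into that quartic form, because they carry $\|\nabla u\|_{L^2}$, which $\|\nabla^2u\|_{H^1}$ does not control. As in the paper, bound them by $\|\nabla u\|_{L^2}^{\frac12}\|\nabla\partial_1u\|_{L^2}^{\frac12}\|\nabla^2b\|_{L^2}^{\frac12}\|\nabla^2\partial_2b\|_{L^2}^{\frac12}\|\nabla^2\partial_2u\|_{L^2}$. Then use $\|\nabla\partial_1u\|_{L^2}=\|\nabla^2u_2\|_{L^2}$ and absorb them into the $C\epsilon$ terms.
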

 \begin{proof}
 Invoking the equation of  \eqref{type-wave-1}$_2$ and \eqref{stability}, we have
 \begin{align*}\label{}
&-\gamma\frac{d}{dt}(1+t)^3(\partial_t\nabla^2b,\nabla^2\partial_2u)+\alpha(1+t)^3\|\nabla^2\partial_2u\|_{L^2}^2\nonumber\\
&=-3\gamma(1+t)^2(\partial_t\nabla^2b,\nabla^2\partial_2u)-\gamma(1+t)^3(\partial_t\nabla^2b,\partial_t\nabla^2\partial_2u)+(1+t)^3(\partial_t\nabla^2b,\nabla^2\partial_2u)\nonumber\\
&\quad-\eta(1+t)^3(\Delta\nabla^2 b,\nabla^2\partial_2u)+(1+t)^3(\nabla^2(u\cdot\nabla b),\partial_2\nabla^2u)-(1+t)^3(\nabla^2(b\cdot\nabla u),\nabla^2\partial_2u)
\nonumber\\
&=-3\gamma(1+t)^2(\partial_t\nabla^2b,\nabla^2\partial_2u)+(1+t)^3J_1+(1+t)^3(\partial_t\nabla^2b,\nabla^2\partial_2u)\nonumber\\
&\quad-\eta(1+t)^3(\Delta\nabla^2 b,\nabla^2\partial_2u)+(1+t)^3(J_3+J_5)
\nonumber\\
& \leq(1+t)^3\Big(\frac{27\gamma^2}{2\alpha}\|\partial_t \nabla^2 b\|_{L^2}^2+\frac{\alpha}{6}\|\nabla^2 \partial_2u\|_{L^2}^2+\frac{3}{2\alpha}\|\partial_t\nabla^2b\|_{H^1}^2+\frac{\alpha}{6}\|\nabla^2\partial_2u\|_{L^2}^2,\\
&\quad\qquad\qquad+ \frac{3\eta^2}{2\alpha}\|\nabla^3 b\|_{H^1}^2+\frac{\alpha}{6}\|\nabla^2\partial_2u\|_{L^2}^2+\frac{\gamma}{2}\left((\mu+\alpha) \|\partial_t\nabla^2b\|_{H^1}^2+\alpha \|\nabla^3 b\|_{H^1}^2+\mu \|\nabla^2u_2\|_{H^1}^2\right)\nonumber\\
&\quad\qquad\qquad+C\epsilon\left(\|\partial_t\nabla^2 b\|_{H^{1}}^2+\|\nabla^3 b\|_{H^{1}}^2+\|\nabla^2\partial_2 u\|_{L^{2}}^2+\|\nabla^2u_2\|_{H^{1}}^2\right)+\frac{1}{\epsilon}\|\nabla^2 u\|_{H^{1}}^2\|\nabla b\|_{H^{1}}^2\Big)\nonumber\\
& \leq(1+t)^3\Big((\frac{27\gamma^2}{2\alpha}+\frac{\gamma(\mu+\alpha)}{2}+\frac{3}{2\alpha}+C\epsilon)\|\partial_t \nabla^2 b\|_{L^2}^2+ (\frac{3\eta^2}{2\alpha}+\frac{\gamma\alpha}{2}+C\epsilon)\|\nabla^3 b\|_{H^1}^2\\
&\quad\qquad\qquad+(\frac{\alpha}{2}+C\epsilon)\|\nabla^2\partial_2 u\|_{L^2}^2+(\frac{\gamma\mu}{2}+C\epsilon)\|\nabla^2u_2\|_{H^1}^2+\frac{1}{\epsilon}\|\nabla^2 u\|_{H^{1}}^2\|\nabla b\|_{H^{1}}^2\Big),
\end{align*}
where we have used the estimates of  $J_1$  given in \eqref{JJJ1}, and
\begin{align*}
J_{3}+J_{5}
&\leq C\|\nabla^2 \partial_2u\|_{L^{2}}\|\nabla^2u\|_{L^{2}}^{\frac{1}{2}}\|\nabla^2\partial_1 u\|_{L^{2}}^{\frac{1}{2}}\|\nabla b\|_{L^{2}}^{\frac{1}{2}}\|\nabla\partial_2 b\|_{L^{2}}^{\frac{1}{2}}+C\|u\|_{L^{\infty}}\|\nabla^3 b\|_{L^{2}}\|\nabla^2\partial_2 u\|_{L^{2}}\nonumber\\
&\quad+C\|\nabla^2 \partial_2u\|_{L^{2}}\|\nabla^2b\|_{L^{2}}^{\frac{1}{2}}\|\nabla^2\partial_2 b\|_{L^{2}}^{\frac{1}{2}}\|\nabla u\|_{L^{2}}^{\frac{1}{2}}\|\nabla\partial_1 u\|_{L^{2}}^{\frac{1}{2}}
+C\|b\|_{L^{\infty}}\|\nabla^3 u\|_{L^{2}}^2\nonumber\\
&\leq C\epsilon\|\nabla^2 \partial_2u\|_{L^{2}}^2+\frac{1}{\epsilon}\|\nabla^2 u\|_{H^{1}}^2\|\nabla b\|_{H^{1}}^2+\|(u,b)\|_{H^{2}}\left(\|\nabla^3 b\|_{L^{2}}^2+\|\nabla^3 u\|_{L^{2}}^2\right)\nonumber\\
&\quad+C(\|\nabla u\|_{L^{2}}+\|\nabla ^2b\|_{L^{2}})\left(\|\nabla^3 b\|_{L^{2}}^2+\|\nabla^2\partial_2 u\|_{L^{2}}^2+\|\nabla\partial_1u\|_{L^{2}}^2\right)
\nonumber\\
&\leq C\epsilon\left(\|\nabla^3b\|_{L^{2}}^2+\|\nabla^2\partial_2 u\|_{L^{2}}^2+\|\nabla^2u_2\|_{H^{1}}^2\right)+\frac{1}{\epsilon}\|\nabla^2 u\|_{H^{1}}^2\|\nabla b\|_{H^{1}}^2.
\end{align*}

Then we get the desired estimate \eqref{keyE}.
 \end{proof}

 Now we complete the proof of   Proposition \ref{Prop2} based on the above two lemmas.

 \textit{Proof of Proposition} \ref{Prop2}. With Lemma \ref{keyy} and \ref{keyy1} at disposal, performing \eqref{boundE}+$\kappa_3\cdot$\eqref{keyE} yields
\begin{align*}
&\frac{1}{2}\frac{d}{dt}(1+t)^3\bigg(\|\nabla^2 u\|^2_{H^1}+\|\nabla^2 b\|^2_{H^1}+2\gamma^2\|\partial_t\nabla^2b\|^2_{H^1}
+2\gamma\eta\|\nabla^{3}b\|^2_{H^1}\nonumber\\
&\qquad\qquad+2\gamma(\partial_t\nabla^2b,\nabla^2b)_{H^1}-2\gamma\kappa_3(\partial_t\nabla^2b,\nabla^2\partial_2u)\nonumber\\
&\qquad\qquad+4\gamma\sum_{m=2}^3\int_{\mathbb{R}^2}
		\big((b\cdot\nabla)\nabla^m b\cdot\nabla^m u
		+\alpha\nabla^m u\cdot\nabla^m\partial_2 b
		+\gamma\partial_t\nabla^m b\cdot (u\cdot\nabla)\nabla^m b\big)dx\bigg)\nonumber\\       &+(1+t)^3\Big((\frac{\mu}{3}-C\epsilon-C\epsilon^2-\kappa_3(\frac{\gamma\mu}{2}+C\epsilon))\|\nabla^{2}u_2\|^2_{H^1}\nonumber\\
&\qquad\ \ \ \ \ \ \ \ +\big(\eta-2\gamma\alpha^2-\frac{3\gamma^2\alpha^2\mu}{2}-C\epsilon-C\epsilon^2-\kappa_3(\frac{3\eta^2}{2\alpha}+\frac{\gamma\alpha}{2}+C\epsilon)\big)\|\nabla^{3}b\|^2_{H^1}\nonumber\\
&\qquad\ \ \ \ \ \ \ \ +\big(\gamma-C\epsilon-C\epsilon^2-\kappa_3(\frac{27\gamma^2}{2\alpha}+\frac{\gamma(\mu+\alpha)}{2}+\frac{3}{2\alpha}+C\epsilon)\big)\|\partial_t\nabla^2b\|^2_{H^1}\nonumber\\
&\qquad\ \ \ \ \ \ \ \ +(\kappa_3(\frac{\alpha}{2}-C\epsilon)- C(\epsilon+\epsilon^2))\|\nabla^2\partial_2u\|_{L^2}^2\Big)\nonumber\\
&\leq C(1+t)^3(\|\nabla^2 u\|_{H^1}^2+\|\nabla^2 b\|_{H^1}^2+\|\partial_t\nabla^2 b\|_{H^{1}}^2)(\| b\|_{H^3}^2+\| u_2\|_{H^3}^2+\| \partial_2u\|_{H^2}^2+\|\partial_t b\|_{H^3}^2)\nonumber\\
&\quad+\frac{\kappa_3}{\epsilon}\|\nabla^2 u\|_{H^{1}}^2\|\nabla b\|_{H^{1}}^2+\frac{C}{\epsilon}(1+t)(\|u_2\|_{H^3}^2+\|\partial_2 u\|_{H^2}^2+\|\nabla b\|_{H^3}^2+\|\partial_t b\|_{H^3}^2).
\end{align*}
Here $\kappa_3$ is a  parameter, and $\alpha, \kappa_3$, $\epsilon$ are sufficiently small.

Noting that
\begin{align*}
2\gamma(\partial_t\nabla^2 b,\nabla^2 b)_{H^1}
&\leq \frac{2}{3}\|\nabla^2 b\|_{H^1}^2+\frac{3\gamma^2}{2}\|\partial_t\nabla^2 b\|_{H^1}^2,\\
4\gamma\alpha\int_{\R^2}\nabla^2u\cdot\nabla^2\partial_2bdx
&\leq \frac{2}{3}\|\nabla^2 u\|_{L^2}+6\gamma^2\alpha^2\|\nabla^3 b\|_{L^2}^2,\\
4\gamma\alpha\int_{\R^2}\nabla^3u\cdot\nabla^3\partial_2bdx
&\leq \frac{2}{3}\|\nabla^3 u\|_{L^2}+6\gamma^2\alpha^2\|\nabla^3 b\|_{L^2}^2,\\
-2\kappa_3\gamma(\partial_t\nabla^2 b,\partial_2\nabla^2 u)&\leq \kappa_3\gamma\big(\|\partial_t\nabla^2 b\|_{L^2}^2+\|\nabla^3 u\|_{H^1}^2\big),
\end{align*}
and
 \begin{align*}
&4\gamma\int_{\R^2}\left((b\cdot\nabla)\nabla^2 b\cdot\nabla^2 u+\gamma\partial_{t}\nabla^2b\cdot (u\cdot\nabla)\nabla^2 b\right)dx\nonumber\\
&+4\gamma\int_{\R^2}\left((b\cdot\nabla)\nabla^3 b\cdot\nabla^3 u+\gamma\partial_{t}\nabla^3b\cdot (u\cdot\nabla)\nabla^3b\right)dx\nonumber\\
&\leq C\epsilon(\|\nabla^2 u\|_{H^1}^2+\|\nabla^2 b\|_{H^2}^2+\|\partial_t\nabla^2 b\|_{H^1}^2).
\end{align*}

And in view of \eqref{stability}, \eqref{4.1}, \eqref{key2} and Gronwall's inequality, we then arrive at
\begin{align*}
&(1+t)^3\Big(\|\nabla^2 u\|^2_{H^1}+\|\nabla^2 b\|^2_{H^1}+2\gamma^2\|\partial_t\nabla^2 b\|^2_{H^1}
+2\gamma\eta\|\nabla^3 b\|^2_{H^1}\nonumber\\
&\qquad\quad+2\gamma(\partial_t\nabla^2 b,\nabla^2 b)_{H^1}-4\kappa_3\gamma(\partial_t\nabla^2 b,\partial_2\nabla^2 u)\nonumber\\
&\qquad\quad+4\gamma\sum_{m=2}^3\int_{\R^2}\big((b\cdot\nabla)\nabla^m b\cdot\nabla^m u+\gamma\partial_{t}\nabla^mb\cdot (u\cdot\nabla)\nabla^m b+\alpha\nabla^mu\cdot\nabla^m\partial_2 b\big)dx\Big)\nonumber\\
&+2\int_{0}^{t}(1+\tau)^3\big((\frac{\mu}{3}-C(\epsilon+\epsilon^2)-\kappa_3(\frac{\gamma\mu}{2}+C\epsilon))\|\nabla^2 u_2(\tau)\|^2_{H^1}\nonumber\\
&\ \ \ \ \ \ \ \ +(\eta-2\gamma\alpha^2-\frac{3\gamma^2\alpha^2\mu}{2}-C(\epsilon+\epsilon^2)-\kappa_3(\frac{3\eta^2}{2\alpha}+\frac{\gamma\alpha}{2}+C\epsilon)\|\nabla ^3b(\tau)\|^2_{H^1}\nonumber\\
&\ \ \ \ \ \ \ \ +(\gamma-C(\epsilon+\epsilon^2)-\kappa_3(\frac{27\gamma^2}{2\alpha}+\frac{\gamma(\mu+\alpha)}{2}+\frac{3}{2\alpha}+C\epsilon)\|\partial_{\tau}\nabla^2 b(\tau)\|^2_{H^1}\nonumber\\
&
\ \ \ \ \ \ \ \ +(\kappa_3(\frac{\alpha}{2}-C\epsilon)-C\epsilon-C\epsilon^2)\|\nabla \partial_2u(\tau)\|_{H^1}^2\big)d\tau\nonumber\\
&\leq C\int_{0}^{t}(1+\tau)\left(\| u_2(\tau)\|^2_{H^3}+\|\partial_2 u(\tau)\|^2_{H^3}+\|\partial_{\tau} b(\tau)\|^2_{H^3}
+\|\nabla b(\tau)\|^2_{H^3}\right)d\tau+C.
\end{align*}
Then we can get \eqref{as3}.
\hfill$\square$
\subsection{The decay rates for $\|\nabla u(t)\|_{H^{2}},\|\nabla b(t)\|_{H^{3}}$ and $
		\|\partial_t\nabla b(t)\|_{H^{2}}$}

By the Gagliardo-Nirenberg interpolation inequality, this subsection proves the upper bound for $E_1(t)$, then the decay rates for $\|\nabla u(t)\|_{H^{2}},\|\nabla b(t)\|_{H^{3}}$ and $\|\partial_t\nabla b(t)\|_{H^{2}}$ then follow directly.
\begin{Prop}\label{key51}
For a constant $C>0$, it holds that
\begin{align}\label{key511}
		E_1(t)
		\leq  CE_0^{\frac{1}{2}}(t)E_2^{\frac{1}{2}}(t).
\end{align}
\end{Prop}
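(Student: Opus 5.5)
The plan is to interpolate directly between $E_0$ and $E_2$, with no new energy estimate. Write $E_1(t)=S_1(t)+D_1(t)$, where $S_1$ is the supremum part and $D_1$ the time-integral part. The weights satisfy $(1+\tau)^2=\big((1+\tau)^1\big)^{1/2}\big((1+\tau)^3\big)^{1/2}$, so every term of $E_1$ should split, via an interpolation inequality in space, as a geometric mean of an $E_0$-type quantity and an $E_2$-type quantity.

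\textbf{Supremum part.} Fix $\tau$. The Gagliardo--Nirenberg inequality on $\mathbb{R}^2$ (equivalently Plancherel together with $|\xi|^{2}\le |\xi|^{0}|\xi|^{4}$ pointwise and Cauchy--Schwarz) gives, for each $j$ with $1\le j\le 3$,
\begin{align*}
\|\nabla^{j} f\|_{L^2}^2\le \|\nabla^{j-1}f\|_{L^2}\|\nabla^{j+1}f\|_{L^2}.
\end{align*}
Applied with $f=u$ (up to order $3$), $f=b$ (up to order $4$) and $f=\partial_\tau b$ (up to order $3$), this yields
\begin{align*}
\|\nabla u\|_{H^2}^2\le C\|u\|_{H^3}\|\nabla^2u\|_{H^1},\qquad \|\nabla b\|_{H^3}^2\le C\|b\|_{H^4}\|\nabla^2 b\|_{H^2},
\end{align*}
and the same bound for $\partial_\tau b$ in $H^2$. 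The top-order piece, such as $\|\nabla^3 u\|_{L^2}^2$, has no higher derivative available at the $E_2$ level. We therefore treat it trivially: $\|\nabla^3u\|_{L^2}^2\le \|u\|_{H^3}\|\nabla^2 u\|_{H^1}$, since the top term belongs to both norms. Multiplying by $(1+\tau)^2=(1+\tau)^{1/2}(1+\tau)^{3/2}$ and taking the supremum bounds $S_1$ by $C\,E_0^{1/2}E_2^{1/2}$.

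\textbf{Dissipation part.} The integrands of $D_1$ are $\mu\|\nabla u_2\|_{H^2}^2$, $\eta\|\nabla^2 b\|_{H^2}^2$, $\gamma\|\partial_\tau\nabla b\|_{H^2}^2$ and $\alpha\|\nabla\partial_2 u\|_{H^1}^2$. Pointwise in time, each one is bounded by the product of the corresponding $E_0$ and $E_2$ dissipation integrands using the same Fourier interpolation:
\begin{align*}
\|\nabla u_2\|_{H^2}^2\le C\|u_2\|_{H^3}\|\nabla^2u_2\|_{H^1},\qquad \|\nabla\partial_2u\|_{H^1}^2\le C\|\partial_2u\|_{H^2}\|\nabla^2\partial_2 u\|_{L^2},
\end{align*}
and similarly for the terms in $b$ and $\partial_\tau b$. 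Then Cauchy--Schwarz in time gives
\begin{align*}
\int_0^t(1+\tau)^2 X Y\,d\tau\le\Big(\int_0^t(1+\tau)X^2d\tau\Big)^{1/2}\Big(\int_0^t(1+\tau)^3Y^2d\tau\Big)^{1/2},
\end{align*}
which is bounded by $E_0^{1/2}E_2^{1/2}$.

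The main obstacle is a bookkeeping mismatch in the regularity indices. $E_0$ controls $\|b\|_{H^4}$ but its dissipation part has only $\|\partial_2 u\|_{H^2}$ and $\|u_2\|_{H^3}$, while the $E_2$ dissipation carries the anisotropic components $\nabla^2u_2$ and $\nabla^2\partial_2u$. I would check that each interpolation pairs like with like. For instance, $\alpha\|\nabla\partial_2u\|_{H^1}^2$ must be split into $\|\nabla\partial_2 u\|_{L^2}^2\le\|\partial_2u\|_{L^2}\|\nabla^2\partial_2u\|_{L^2}$ and a top-order part $\|\nabla^2\partial_2u\|_{L^2}^2$, and the latter sits in both functionals. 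Where the natural pairing is not available, I would use the identity $\|\partial_1u\|=\|\nabla u_2\|$ (from $\nabla\cdot u=0$). Once all terms are matched, summing gives \eqref{key511}, and Propositions~\ref{Prop1} and~\ref{Prop2} then yield $E_1(t)\le C$.
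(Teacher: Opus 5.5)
Your proposal is correct and follows the paper's proof. It uses the spatial interpolation $\|\nabla^{j}f\|_{L^2}^2\le\|\nabla^{j-1}f\|_{L^2}\|\nabla^{j+1}f\|_{L^2}$, bounds top-order pieces trivially since they lie in both norms, splits the weight as $(1+\tau)^2=(1+\tau)^{1/2}(1+\tau)^{3/2}$, and applies Cauchy--Schwarz in time to the dissipation integrals; your term-by-term check is more complete than the paper's three representative cases. One small slip: the parenthetical ``$|\xi|^{2}\le|\xi|^{0}|\xi|^{4}$'' is false for $|\xi|<1$, and what you want is the identity $|\xi|^{2j}=|\xi|^{j-1}\cdot|\xi|^{j+1}$ followed by Cauchy--Schwarz on the Fourier side, which gives the stated inequality with constant one.
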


\begin{proof}
By the Gagliardo-Nirenberg interpolation inequality, we have
\begin{align*}
\|\nabla f\|_{L^2}\leq C\| f\|_{L^2}^{\frac{1}{2}}\|\nabla^{2}f\|_{L^2}^{\frac{1}{2}},
\end{align*}
which implies
\begin{align*}
&(1+t)^2\|\nabla u\|_{H^2}^2\leq C\left((1+t)\| u\|_{H^3}^2\right)^{\frac{1}{2}}\left((1+t)^3\|\nabla^{2}u\|_{H^1}^2\right)^{\frac{1}{2}},\nonumber\\
&(1+t)^2\|\nabla b\|_{H^3}^2\leq C\left((1+t)\| b\|_{H^4}^2\right)^{\frac{1}{2}}\left((1+t)^3\|\nabla^{2}b\|_{H^2}^2\right)^{\frac{1}{2}},
\end{align*}
and
\begin{align*}
\int_0^t(1+\tau)^2\|\partial_2\nabla u(\tau)\|_{H^1}^2d\tau
\leq C\left(\int_0^t(1+\tau)\|\partial_2 u(\tau)\|_{H^2}^2d\tau\right)^{\frac{1}{2}}\left(\int_0^t(1+\tau)^3\|\partial_2\nabla u(\tau)\|_{H^1}^2d\tau\right)^{\frac{1}{2}}.
\end{align*}

Finally, \eqref{key511} follows from the above inequalities immediately.
\end{proof}

\subsection{The decay rates for $\|\nabla^3 u_2(t)\|_{L^{2}},\|\nabla^3 b_2(t)\|_{H^{1}}$ and $
		\|\partial_t\nabla^3 b_2(t)\|_{L^{2}}$}
\medskip
In the last subsection, we establish $a$ $priori$ estimate for $E_3(t)$ as follows and then get directly the desired decay rates.
\begin{Prop}\label{key5.1}
Assume that $(u,b)$ is a smooth solution to \eqref{type-wave-1}, then we have
\begin{align*}
	E_3(t)\leq C,
\end{align*}
\end{Prop}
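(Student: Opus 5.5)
The bound $E_3(t)\le C$ will come from a time-weighted energy estimate at the level $\nabla^3$, restricted to the second components $(u_2,b_2)$, with weight $(1+t)^4$. The key structural fact is that the second component of the velocity equation carries the genuine damping $\mu u_2$, so no anisotropic $x_2$-dissipation has to be generated for it; this is why $E_3$ contains no $\alpha\|\nabla^3\partial_2 u_2\|^2$ term.

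\textbf{Energy identities.} I apply $\nabla^3$ to the second components of \eqref{type-wave-1}$_1$ and \eqref{type-wave-1}$_2$ and test against $\nabla^3u_2$ and $\nabla^3 b_2$. I also test the $b_2$-equation against $2\gamma\,\partial_t\nabla^3 b_2$, which controls $\gamma^2\|\partial_t\nabla^3 b_2\|^2$ and $\gamma\eta\|\nabla^4 b_2\|^2$. The sum gives, as in \eqref{mhd3},
\begin{align*}
&\frac12\frac{d}{dt}\Bigl(\|\nabla^3u_2\|_{L^2}^2+\|\nabla^3b_2\|_{L^2}^2+2\gamma^2\|\partial_t\nabla^3b_2\|_{L^2}^2+2\gamma\eta\|\nabla^4b_2\|_{L^2}^2+2\gamma(\partial_t\nabla^3b_2,\nabla^3b_2)\Bigr)\\
&\quad+\mu\|\nabla^3u_2\|_{L^2}^2+\eta\|\nabla^4b_2\|_{L^2}^2+\gamma\|\partial_t\nabla^3b_2\|_{L^2}^2=\text{(linear coupling)}+\text{(pressure)}+\text{(nonlinear)}.
\end{align*}
I then multiply by $(1+t)^4$.

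\textbf{Linear terms.} The coupling term $\alpha(\nabla^3\partial_2 b_2,\nabla^3u_2)+\alpha(\nabla^3\partial_2u_2,\nabla^3b_2)$ vanishes. The term $2\gamma\alpha(\nabla^3\partial_2u_2,\partial_t\nabla^3b_2)$ loses one derivative. I would handle it by integrating by parts onto $b_2$ and writing it as $-2\gamma\alpha\frac{d}{dt}(\nabla^3u_2,\nabla^3\partial_2b_2)$ plus $2\gamma\alpha(\partial_t\nabla^3u_2,\nabla^3\partial_2 b_2)$. In the latter, I substitute $\partial_tu_2$ from the equation. The linear part then produces $-\mu u_2$, absorbed by Young's inequality with small $\alpha$; it produces $\alpha\partial_2b_2$, giving $2\gamma\alpha^2\|\nabla^4b_2\|^2$, absorbed in $\eta\|\nabla^4 b_2\|^2$; and it produces $-\partial_2p$.

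\textbf{Pressure.} This is the main new point compared to the full-vector estimates: because only $u_2$ is tested, the pressure no longer cancels. The linear part of the pressure is $\mu(-\Delta)^{-1}\partial_2^2u_2$ with an $L^2$-bounded symbol, so $\nabla^3\partial_2p_{\rm lin}$ is controlled by $\mu\|\nabla^3u_2\|$. Its contribution must be handled with the sign structure: $\mu\mathcal R_2^2$ acts as a fraction of the damping, and in total the $u_2$-equation has damping $\mu\mathcal R_1^2 u_2$-type. Since $u_2$ tested alone gives $\mu\|\mathcal R_1\nabla^3u_2\|^2$, I may instead test the projected equation and use $\|\nabla^3 u_2\|\le\|\nabla^2\partial_1u_1\|+\|\nabla^2\partial_2 u_2\|$ via divergence-free, borrowing the $\partial_2$ part from the dissipation $(1+t)^3$-weighted in $E_2$.

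\textbf{Remaining terms.} The nonlinear pressure and the convection terms are quadratic. I bound them with Lemma \ref{2.1}, \eqref{stability} and the decay already encoded in $E_0,E_1,E_2$. For example, a typical term is $(1+t)^4\|\nabla^3 u\|\,\|\nabla u\|_{L^\infty}\|\nabla^3u_2\|$, which is at most $\epsilon(1+t)^4\|\nabla^3u_2\|^2+C(1+t)^{4}\|\nabla u\|_{H^2}^2\|\nabla^3u\|^2$, and $E_1E_2$ makes the last piece integrable. The worst term is $2\gamma(\nabla^3(b\cdot\nabla u_2),\partial_t\nabla^3b_2)$, containing $b\cdot\nabla\nabla^3 u_2$. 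For it I repeat the cancellation chain of Proposition \ref{A1b} componentwise, producing the corresponding $\frac{d}{dt}$ corrector terms.

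\textbf{Time weight and closing.} The weight produces $4(1+t)^3\times$energy. I control this by the $E_2$ dissipation, using $\|\nabla^3 u_2\|\le\|\nabla^2\partial_1u\|$ and $\|\nabla^3 u_2\|\le\|\nabla^2\partial_2u\|$ together with the $\nabla^3b_2,\partial_t\nabla^3 b_2$ terms in $E_2$; this is the reason for the gain of exactly one power. Integrating in time and absorbing the correctors with \eqref{stability} closes $E_3(t)\le C$. The main obstacle I expect is the derivative-loss term $(b\cdot\nabla)\nabla^3u_2$ paired with $\partial_t\nabla^3b_2$, together with the pressure coupling of $u_2$ to $u_1$, which prevents a clean component-wise cancellation.
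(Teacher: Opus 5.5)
\textbf{The gap: the pressure term removes the dissipation you need.} Your opening claim is that the damping $\mu u_2$ alone supplies the dissipation in $E_3$, so no $x_2$-dissipation has to be generated. This fails once the pressure is included. If you test only the second component, the linear pressure $\partial_2 p_{\rm lin}=-\mu\Delta^{-1}\partial_2^2u_2$ has symbol $-\mu\xi_2^2/|\xi|^2$. Together with the damping, the linear term becomes $\mu\frac{\xi_1^2}{|\xi|^2}\hat u_2$, so the identity only gives
\[
\mu\|\mathcal R_1\nabla^3u_2\|_{L^2}^2=\mu\|\nabla^2\partial_1u_2\|_{L^2}^2 .
\]
The remainder $\mu\|\mathcal R_2\nabla^3u_2\|_{L^2}^2=\mu\|\nabla^2\partial_2u_2\|_{L^2}^2$ reappears on the right-hand side with the wrong sign, and testing the projected equation changes nothing. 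Your repair fails on two counts:
\begin{itemize}
\item The inequality $\|\nabla^3u_2\|\le\|\nabla^2\partial_1u_1\|+\|\nabla^2\partial_2u_2\|$ is false. By $\nabla\cdot u=0$ its right-hand side equals $2\|\nabla^2\partial_2u_2\|$, which is arbitrarily small compared with $\|\nabla^3u_2\|$ when $\hat u$ is concentrated in $|\xi_2|\le\delta|\xi_1|$. The same example refutes $\|\nabla^3u_2\|\le\|\nabla^2\partial_2u\|$ in your last paragraph, although there $E_2$ already contains $\int(1+\tau)^3\mu\|\nabla^3u_2\|^2$, so you do not need it.
\item Even with the correct splitting, the missing piece must be bounded in $\int_0^t(1+\tau)^4\|\nabla^2\partial_2u_2\|_{L^2}^2\,d\tau$. $E_2$ controls this only with weight $(1+\tau)^3$, and its pointwise part gives merely $(1+\tau)^4\|\nabla^2\partial_2u_2\|_{L^2}^2\le C(1+\tau)$, which is not integrable.
\end{itemize}

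\textbf{Consequence.} The dissipation $\int_0^t(1+\tau)^4\mu\|\nabla^3u_2\|_{L^2}^2\,d\tau$, which is part of $E_3$, is never produced. You also rely on it to absorb $\epsilon(1+t)^4\|\nabla^3u_2\|^2$ and the $\mu u_2$-type contributions that arise when substituting $\partial_tu_2$ in the $A_2$-type term. This is not a technicality. On the Fourier side the damping acts on the amplitude of $u$ only at rate $\mu\xi_1^2/|\xi|^2$. In the sector $|\xi_1|\ll|\xi|$, the $(1+t)^4$-weighted dissipation of $u_2$ can therefore only come from the $\alpha\partial_2$ coupling with $b$.

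\textbf{What is missing.} You need an additional estimate in the spirit of Lemma \ref{keyy1} that extracts $(1+t)^4$-weighted control of $\nabla^2\partial_2u_2$ from the $b_2$-equation. The paper's one-line proof points to Proposition \ref{Prop2}, whose proof contains such a lemma. Closing that estimate requires $(1+t)^4$-weighted bounds on quantities such as $\|\partial_t\nabla^2b_2\|_{L^2}^2$, which are not in $E_3$, and your sketch does not address this.

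\textbf{Secondary issue.} Proposition \ref{A1b} cannot simply be ``repeated componentwise'' at weight $(1+t)^4$.
\begin{itemize}
\item Its error term has prefactor $\|(u,b,\partial_tb)\|_{H^3}$, which decays only like $(1+t)^{-1/2}$. It multiplies full-vector quantities such as $\|\nabla^2\partial_2u\|_{L^2}^2$, which are then not integrable from $E_2$.
\item The identity $\int\partial_i^3\nabla p\cdot\partial_i^3\partial_2 b\,dx=0$ used in Lemma \ref{2.4} fails for a single component.
\end{itemize}
Each such term has to be re-estimated with factors like $\|b\|_{L^\infty}$ or $\|\nabla u\|_{L^\infty}$, which decay at least like $(1+t)^{-1}$.
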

\begin{proof}
Proposition \ref{key5.1} follows by an argument similar to that used for Proposition  \ref{Prop2}, the details are omitted.
\end{proof}

\medskip

\subsection*{Acknowledgements.}
Chen was partially supported by the National Natural Science Foundation of China (No. 12501294) and Start-up funds for doctoral research of Anhui Normal University (No. 762350). Fei was partially supported by the National Science Foundation of China (Grant No.12271004, No. 12471222) and the Natural Science Foundation of Anhui Province of China (Grant No. 2308085J10). Lin was partially supported by the National Natural Science Foundation of China (NNSFC) (Grant No. 12571246) and the Natural Science Foundation of Sichuan Province (Grant No. 2023NSFSC0056). Wu was partially supported by the
National Science Foundation of the United States (Grant No. DMS 2309748).
\subsection*{Data availability}
No data was used for the research described in the article.

\begin {thebibliography}{DUMA}

\bibitem{AbidiZhang} H. Abidi and P. Zhang, On the global solution of a 3-D MHD system
with initial data near equilibrium, Comm. Pure Appl. Math., 70 (2017) 1509-1561.

\bibitem{AMS} A. Alemany, R. Moreau, P. Sulem, and U. Frisch, Influence of an external magnetic
field on homogeneous MHD turbulence, J. M$\acute{e}$c., 18 (1979) 277-313.

\bibitem{A} A. Alexakis, Two-dimensional behavior of three-dimensional magnetohydrodynamic
flow with a strong guiding field, Phys. Rev. E, 84 (2011) 056330.

\bibitem{AL} H. Alfv\'en, Existence of electromagnetic-hydrodynamic waves, Nature, 150 (1942) 405-406.

 \bibitem{BLW} N. Boardman, H. Lin, and J. Wu, Stabilization of a background magnetic field on a 2 dimensional magnetohydrodynamic flow, SIAM J. Math. Anal., 52 (2020) 5001-5035.

\bibitem{CW} C. Cao and J. Wu, Global regularity for the 2D MHD equations with mixed partial dissipation and magnetic diffusion, Adv. Math., 226 (2011) 1803-1822.

\bibitem{CZZ} W. Chen, Z. Zhang, and J. Zhou, Global well-posedness for the 3-D MHD equations with partial diffusion in periodic domain, Sci. China Math., 65 (2022) 309-318.

\bibitem{Comissiong} D. M. G. Comissiong, R. A. Kraenkel, and M. A. Manna,
Solitary waves on a conducting fluid layer, Phys. Lett. A, 372 (2008) 1477-1480.

\bibitem{FHW}  W. Feng, F. Hafeez, and J. Wu, Influence of a background magnetic field on a 2D magnetohydrodynamic flow, Nonlinearity, 34 (2021) 2527-2562.

\bibitem{GBM} B. Gallet, M. Berhanu, and N. Mordant, Influence of an external magnetic field on
forced turbulence in a swirling flow of liquid metal, Phys. Fluids, 21 (2009) 085107.

\bibitem{GB} B. Gallet and C. R. Doering, Exact two-dimensionalization of low-magnetic-Reynolds-number flows subject to a strong magnetic field, J. Fluid Mech., 773 (2015) 154-177.

\bibitem{HeXuYu} L.-B. He, L. Xu, and P. Yu, On global dynamics of three dimensional
magnetohydrodynamics: nonlinear stability of Alfv\'{e}n waves, Ann. PDE, 4 (2018),
Art. 5, 105 pp.

\bibitem{JiWu} R. Ji and J. Wu, The resistive magnetohydrodynamic equation near an
equilibrium, J. Differ. Equ., 268 (2020) 1854-1871.

\bibitem{JWX} R. Ji, J. Wu, and X. Xu, Global well-posedness of the 2D MHD equations of damped wave type in Sobolev space, SIAM J. Math. Anal., 54 (2022) 6018-6053.

\bibitem{LaWZ} S. Lai, J. Wu, and J. Zhang, Stabilizing phenomenon for 2D anisotropic magnetohydrodynamic system near a background
magnetic field. SIAM J. Math. Anal., 53(5) (2021) 6073-6093.

\bibitem{LaWZ1} S. Lai, J. Wu, and J. Zhang, Stabilizing effect of magnetic field on the 2D ideal magnetohydrodynamic
flow with mixed partial damping, Calc. Var. Partial Differential Equations, 61 (2022), Paper No. 126.

\bibitem{LWZZ} S. Lai, J. Wu, J. Zhang, and X. Zhao, Stability and sharp decay estimates for 3D MHD equations with only vertical dissipation near a background magnetic field, Adv. Math., 486 (2026) 110747.

\bibitem{LiWuXu} C. Li, J. Wu, and X. Xu, Smoothing and stabilization effects of
magnetic field on electrically conducting fluids, J. Differ. Equ., 276
(2021) 368-403.

\bibitem{LXZ} F. Lin, L. Xu, and P. Zhang, Global small solutions to 2-D incompressible
MHD system, J. Differ. Equ., 259 (2015) 5440-5485.

\bibitem{LJWL} H. Lin, R. Ji, J. Wu, and L. Yan, Stability of perturbations near a background magnetic field of the 2D incompressible MHD equations with mixed partial dissipation. J. Funct. Anal., 279 (2020) 108519.

\bibitem{LWZ} H. Lin, J. Wu, and Y. Zhu, Global solutions to 3D incompressible MHD system with dissipation
in only one direction, SIAM J. Math. Anal., 55 (2023) 4570-4598.

\bibitem{LWZ1} H. Lin, J. Wu, and Y. Zhu, Stability and large-time behavior on 3D incompressible MHD equations with partial dissipation near a background magnetic field, Arch. Ration. Mech. Anal., 249 (2025) 26.

\bibitem{MB}  A. Majda and A. Bertozzi, Vorticity and Incompressible Flow, Cambridge University Press, 2002.

\bibitem{MNO}  T. Matsui, R. Nakasato, and T. Ogawa, Singular limit for the magnetohydrodynamics of the damped wave type in the critical Fourier-Sobolev space, J. Differ. Equ., 271 (2021) 414-446.

\bibitem{PZZ} R. Pan, Y. Zhou, and Y. Zhu, Global classical solutions of three
dimensional viscous MHD system without magnetic diffusion on periodic boxes, Arch.
Ration. Mech. Anal., 227 (2018) 637-662.

\bibitem{RWXZ} X. Ren, J. Wu, Z. Xiang, and Z. Zhang, Global existence and decay of
smooth solution for the 2-D MHD equations without magnetic diffusion, J. Funct. Anal.,
267 (2014) 503-541.

\bibitem{SW} W. Sun and W. Wang, Global existence and uniqueness of the 2D damped wave-type MHD equations, Z. Angew. Math. Phys., 74 (2023) 135.

\bibitem{Tao} T. Tao, Nonlinear Dispersive Equations: Local and Global Analysis, CBMS Regional Conference Series in Mathematics, American Mathematical Society, Providence, RI, 2006.

 \bibitem{WZ} D. Wei and Z. Zhang, Global well-posedness for the 2-D MHD equations with magnetic diffusion, Commun. Math. Res. 36 (2020), no. 4, 377-389.

 \bibitem{WWX} J. Wu, Y. Wu, and X. Xu, Global small solution to the 2D MHD system with a velocity damping term, SIAM J. Math. Anal., 47 (2015) 2630-2656.

 \bibitem{WZ1} J. Wu and Y. Zhu, Global solutions of 3D incompressible MHD system with mixed partial dissipation and magnetic diffusion near an equilibrium, Adv. Math., 377 (2021) 107466.

 \bibitem{XY} Y. Xie and H. Yu, Large time behavior of solutions to the 2D damped wave-type magnetohydrodynamic equations, J. Evol. Equ., 25 (2025), Art. 98.

\bibitem{ZZ}  Y. Zhou and Y. Zhu, Global classical solutions of 2D MHD system with only magnetic diffusion on periodic domain, J. Math. Phys., 59 (2018) 081505.
\end{thebibliography}
\end{document}